\numberwithin{equation}{section}
\newtheorem{thm}{Theorem}[section]
\newtheorem*{Theorem*}{Theorem}
\newtheorem{prop}[thm]{Proposition}
 { \theoremstyle{definition}
\newtheorem{defn}[thm]{Definition}

\newtheorem{rem}[thm]{Remark} }
\newcommand{\what}[1]{\widehat{#1}}
\newcommand{\ol}[1]{\overline{#1}}
\newcommand{\End}{\mathrm{End}}
\newcommand{\Hom}{\mathrm{Hom}}
\newcommand{\id}{\operatorname{id}}
\newcommand{\Id}{\operatorname{Id}}
\newcommand{\catMod}{\mathsf{Mod}}
\newcommand{\op}{\mathrm{op}}
\newcommand{\TCprod}{\overline{\boxtimes}}
\newcommand{\bC}{\mathbb{C}}
\newcommand{\bQ}{\mathbb{Q}}
\newcommand{\bR}{\mathbb{R}}
\newcommand{\bZ}{\mathbb{Z}}
\newcommand{\cA}{\mathcal{A}}
\newcommand{\cB}{\mathcal{B}}
\newcommand{\cC}{\mathcal{C}}
\newcommand{\cR}{\mathcal{R}}
\newcommand{\cT}{\mathcal{T}}
\newcommand{\cU}{\mathcal{U}}
\newcommand{\fsl}{\mathfrak{sl}}
\newcommand{\sfM}{\mathsf{M}}
\newcommand{\sfU}{\mathsf{U}}
\newcommand{\sfV}{\mathsf{V}}
\newcommand{\sfW}{\mathsf{W}}
\newcommand{\bbmi}{\mathbbm{i}}
\newcommand{\qint}[1]{\llbracket #1 \rrbracket}
\newcommand{\UEA}{\cU}
\newcommand{\Uq}{\UEA_{q}}
\newcommand{\Uqtilde}{\UEA_{\tilde{q}}}
\newcommand{\QGirrep}[1]{\sfM_{#1}}
\newcommand{\QGirvec}[2]{u^{(#1)}_{#2}}
\newcommand{\QGirmap}[1]{\pi_{#1}}
\newcommand{\QGcoprod}{\Delta}
\newcommand{\QGcounit}{\varepsilon}
\newcommand{\QGantipode}{S}
\newcommand{\CGemb}[3]{\iota_{#1}^{#2 #3}}
\newcommand{\CGproj}[3]{p^{#1}_{#2 #3}}
\newcommand{\CGcoeff}[4]{c_{#1}^{#2 #3}(#4)}
\newcommand{\CGembop}[3]{(\iota^{\op})_{#1}^{#2 #3}}
\newcommand{\sixjsymb}[6]{
\left\{
\begin{array}{ccc}
 #1 & #2 & #5 \\
 #3 & #4 & #6
\end{array}
\right\}
}
\newcommand{\ascmtx}[6]{\cB^{#1 #2 #5}_{#3 #4 #6}}
\newcommand{\catQG}{\cC_{\fsl_{2}}}
\newcommand{\catQGdouble}{\what{\cC}_{\fsl_{2}}}
\newcommand{\catVir}{\cC_{\vir}}
\newcommand{\catgen}{\cC}
\newcommand{\Sel}{\mathrm{Sel}}
\newcommand{\Chan}[4]{I^{#1 #2}_{#3 #4}}
\newcommand{\QGrepU}{\sfU}
\newcommand{\QGrepV}{\sfV}
\newcommand{\QGrepW}{\sfW}
\newcommand{\QGrepvec}{u}
\newcommand{\VecspU}{\mathbf{U}}
\newcommand{\VecspV}{\mathbf{V}}
\newcommand{\univR}{\cR}
\newcommand{\braid}{c}
\newcommand{\cattwist}{\theta}
\newcommand{\voaV}{V}
\newcommand{\voagen}{V}
\newcommand{\voamodgen}{W}
\newcommand{\voavac}{\mathbf{1}}
\newcommand{\voaY}{Y}
\newcommand{\voaconf}{\omega}
\newcommand{\voaint}{\mathcal{Y}}
\newcommand{\inttyp}[3]{\binom{#3}{#1 #2}}
\newcommand{\intsp}[3]{I\inttyp{#1}{#2}{#3}}
\newcommand{\intmap}{F}
\newcommand{\voavec}{v}
\newcommand{\vmodvec}{w}
\newcommand{\virvoa}{\mathbb{V}}
\newcommand{\virVerma}{\mathbb{M}}
\newcommand{\virmod}{\mathbb{W}}
\newcommand{\virmodgen}{\mathbb{U}}
\newcommand{\vir}{\mathfrak{vir}}
\newcommand{\hwvec}{\mathbf{w}}
\newcommand{\frweight}[1]{\mathbf{h}_{#1}}
\newcommand{\fusNorm}{\mathbf{B}}
\newcommand{\pzproj}[4]{(p_{P(#1)})_{#2 #3}^{#4}}
\newcommand{\ev}{\mathsf{ev}}
\newcommand{\coev}{\mathsf{coev}}
\newcommand{\hackcenter}[1]{
 \xy (0,0)*{#1}; \endxy}
\begin{document}

\newcommand{\arXivNumber}{2207.12969}

\renewcommand{\PaperNumber}{039}

\FirstPageHeading

\ShortArticleName{Module Categories of the Generic Virasoro VOA and Quantum Groups}

\ArticleName{Module Categories of the Generic Virasoro VOA \\ and Quantum Groups}

\Author{Shinji KOSHIDA}

\AuthorNameForHeading{S.~Koshida}

\Address{Department of Mathematics and Systems Analysis, Aalto University, Finland}
\Email{\href{mailto:shinji.koshida@aalto.fi}{shinji.koshida@aalto.fi}}

\ArticleDates{Received October 25, 2024, in final form May 26, 2025; Published online June 03, 2025}

\Abstract{In this paper, we prove the equivalence between two ribbon tensor categories. On the one hand, we consider the category of modules of the Virasoro vertex operator algebra with generic central charge (generic Virasoro VOA) generated by those simple modules lying in the first row of the Kac table. On the other hand, we take the category of finite-dimensional type I modules of the quantum group $\mathcal{U}_q (\mathfrak{sl}_{2})$ with $q$ determined by the central charge. This is a continuation of our previous work in which we examined intertwining operators for the generic Virasoro VOA in detail. Our strategy to show the categorical equivalence is to take those results as input and directly compare the structures of tensor categories. Therefore, we are to execute the most elementary proof of categorical equivalence. We also study the category of $C_{1}$-cofinite modules of the generic Virasoro VOA. We show that it is ribbon equivalent to the category of finite-dimensional type I modules of $\mathcal{U}_q (\mathfrak{sl}_{2})\otimes \mathcal{U}_{\tilde{q}}(\mathfrak{sl}_{2})$, where $q$ and $\tilde{q}$ are again related to the central charge.}

\Keywords{vertex operator algebra; Virasoro algebra; quantum group}

\Classification{17B69; 17B68; 18M15}

\section{Introduction}
\subsection{Background}
Two-dimensional conformal field theories (CFT)~\cite{BPZ1984b,DMS_yellow_CFT} form one of the most developed classes of quantum field theories.
The infinite-dimensional symmetry of CFT allows one to identify the primary fields of the theory and the operator product expansion (OPE) of the primary fields in a closed form, sometimes leading to the exact solution.
Due to their integrability, CFTs have played important roles in string theory~\cite{GreenSchwarzWitten_superstring}, condensed matter physics~\cite{Hansson_QHall_CFT,Ludwig_CFT_condensed_matter}, and statistical physics~\cite{Cardy_lecture_2008,Mussardo-statistical_field_theory}.

Vertex operator algebras (VOA)~\cite{FLM-VOAs_Monster,Lepowsky_Li-VOA} and their representation theory give an algebraic counterpart of CFTs~\cite{Huang-CFT_and_VOA}.
According to the dictionary, a VOA gives the chiral algebra of a~CFT, and the irreducible modules of the VOA correspond to the primary fields of the CFT.
Furthermore, the OPE of primary fields is translated into a tensor product of modules of the VOA.
Therefore, we could say that the study of a CFT partly comes down to the study of the category of modules of the corresponding VOA as a tensor category.

Quantum groups~\cite{Drinfeld1986} are algebras whose module categories are clearly tensor categories.
There are several known examples of equivalence between module categories of VOAs and those of quantum groups as tensor categories.
Prominent examples include the Kazhdan--Lusztig-type duality~\cite{Drinfeld1989,Finkelberg1996,KL-tensor_structures_affine_Lie-I,KL-tensor_structures_affine_Lie-II,KL-tensor_structures_affine_Lie-III,KL-tensor_structures_affine_Lie-IV,McRae-nonneg_integer_level_affine_Lie_algebra_tensor_cat}, and the duality between the triplet VOA and the small quantum $\fsl_{2}$~\cite{CreutzigLentnerRupert,FeiginGainutdinovSemikhatovTipunin2006a,FeiginGainutdinovSemikhatovTipunin2006b,GannonNegron-quantum_sltwo_and_log_VOAs,KondoSaito2011, NagatomoTsuchiya2011,TsuchiyaWood2013}.
The present work is to give another example, namely, the Virasoro VOA with generic central charge and the quantum enveloping algebra of $\fsl_{2}$ with generic quantization parameter denoted by $\Uq(\fsl_{2})$.

\subsection{Relation to previous work}
This paper is a continuation of our previous work~\cite{KoshidaKytola2022}.
This work might have a different flavor compared to other works concerning categorical equivalence.
To be clear about our point of view and to motivate our method,
it would be worthwhile giving a quick overview of the line of research.

Since the early stages of the research of CFT, hidden quantum group symmetry has been observed in several places~\cite{FW-topological_representation_of_Uqsl2,GRS-quantum_groups_in_2d_physics,MR-comment_on_quantum_group_symmetry_in_CFT,PS-common_structures_between_finite_systems_and_CFTs,RRR-contour_picture_of_quantum_groups_in_CFT, SV-quantum_groups_and_homology_of_local_systems,
Varchenko-multidimensional_hypergeometric_functions_and_representation_theory_of_Lie_algebras_and_quantum_groups}.
In~\cite{KP-conformally_covariant_bdry_correlations}, the authors established one of the most concrete versions of the connection between a quantum group and CFT;
they gave maps from representations of $\Uq(\fsl_{2})$ to certain correlation functions of CFT.
Although their motivation was in application to the theory of Schramm--Loewner evolution~\cite{BB-CFTs_of_SLEs,Schramm2000}, their results were also of representation theoretical importance.
In fact, one consequence, among others, of their construction is that the asymptotic behavior of a correlation function when two points are close to each other is governed by the branching and associativity of the tensor product of representations of $\Uq(\fsl_{2})$.

In our previous work~\cite{KoshidaKytola2022}, we enhanced the results of \cite{KP-conformally_covariant_bdry_correlations} in the language of VOA.
Specifically, we identified the generic Virasoro VOA as the relevant VOA, determined the fusion rules among simple modules from the first row of the Kac table, and proved that the associativity of intertwining operators is governed by the quantum group $\Uq(\fsl_{2})$.

\subsection{Overview of the results}
In the present work, we examine the category-theoretical implications of the results of the previous one~\cite{KoshidaKytola2022},
and add discussion on the braiding as well.
It is standard to parametrize the central charge $c\in \bC$ of the universal Virasoro VOAs $\virvoa_{c}$ (see Section~\ref{sect:generic_VirVOA} for the definition)
by another parameter $t\in \bC$ as
$
	c = 13 - 6 \big(t + t^{-1}\big)$.
We focus on the case that $t\not\in\bQ$ is generic and call the VOA $\virvoa_{c}$ the generic Virasoro VOA.
The highest weight simple modules of $\virvoa_{c}$ are labeled by the conformal weight $h\in \bC$.
The Kac table gives the following {\it table} of conformal weights of particular interest
\begin{gather*}
	h_{r,s} = \frac{r^{2}-1}{4}t-\frac{rs-1}{2}+\frac{s^{2}-1}{4}t^{-1},\qquad r,s\in \bZ_{\geq 1}.
\end{gather*}
For each $h_{r,s}$, $r,s\in\bZ_{\geq 1}$, the corresponding simple highest weight $\virvoa_{c}$-module is denoted by $\virmod_{(r-1,s-1)}$.

In this paper, we follow~\cite{etingof2015tensor} for the categorical terminology.
In particular, a {\it tensor category} is a $\bC$-linear rigid monoidal category
with a one-dimensional endomorphism space on the unit object.
When it is further equipped with a braiding and a ribbon structure, we call it a {\it ribbon tensor category}.
Note that we do not assume finiteness.

We will study what we call the first-row category.
It is the category of $\virvoa_{c}$-modules generated by $\virmod_{(\ell,0)}$, $\ell \in\bZ_{\geq 0}$ as an additive category,
and will be denoted by $\catVir^{+}(t)$.
It is not a priori clear that the category $\catVir^{+}(t)$ can be equipped with the structure of a ribbon tensor category.
Thus, we first show that it is a ribbon tensor category.
Furthermore, we relate it to another ribbon tensor category of interest;
the category of finite-dimensional $\Uq(\fsl_{2})$-modules of type I denoted by $\catQG (q)$.
Let us phrase our result in the following way.

\begin{thm}
\label{thm:first-row-cat_QG}
The first-row category $\catVir^{+}(t)$ is equipped with the structure of a ribbon tensor category.
The resulting ribbon tensor category $\catVir^{+}(t)$ is equivalent to $\catQG(q)$
under the parameter matching $q = {\rm e}^{\pi \bbmi t}$.
\end{thm}

We also study the category $\catVir^{1}$ of $C_{1}$-cofinite $\virvoa_{c}$-modules.
It has been proven in~\cite{CJORY-tensor_categories_arising_from_Virasoro_algebra} that the category $\catVir^{1}$ is semi-simple and the simple objects are exhausted by $\virmod_{(k,l)}$, $k,l \in\bZ_{\geq 0}$.
Furthermore, it is equipped with the structure of a ribbon tensor category,
and the resulting ribbon tensor category will be denoted by $\catVir^{1}(t)$.
In particular, the first-row category $\catVir^{+}(t)$ is a subcategory of $\catVir^{1}(t)$.
It also follows from the fusion rules in $\catVir^{1}(t)$ that, as a tensor category,
the category $\catVir^{1}(t)$ is generated by the modules in $\catVir^{+}(t)$ and the modules in the first {\it column} of the Kac table,
which allows for the following result.

\begin{thm}
\label{thm:C1-cat_QG}
As a ribbon tensor category, $\catVir^{1}(t)$ is equivalent to the category of finite-dimensional type I modules of $\Uq (\fsl_{2})\otimes \cU_{\tilde{q}}(\fsl_{2})$ with the parameter matching $q = {\rm e}^{\pi\bbmi t}$, \smash{$\tilde{q} = {\rm e}^{\pi \bbmi t^{-1}}$}.
\end{thm}

\subsection{Relation to other works}
There are, in fact, various ways to prove our results.
First of all, in \cite{CJORY-tensor_categories_arising_from_Virasoro_algebra},
they proved that $\catVir^{+}(t)$ is braided equivalent to a certain Kazhdan--Lusztig category of \smash{$\what{\fsl}_{2}$} along the way of proving the rigidity of $\catVir^{+}(t)$.
Therefore, our results can be derived from it, too.

There are also approaches to the Kazhdan--Lusztig-type duality when the quantum group is at a root of unity.
In \cite{GannonNegron-quantum_sltwo_and_log_VOAs}, it has been established that the module category of the quantum~${\rm SL}(2)$ at a root of unity and a certain module category of the Virasoro VOA in the logarithmic setting are equivalent.
The key tool therein is Ostrik's functor~\cite{ostrik2005module} that characterizes the module category of the quantum~${\rm SL}(2)$.
We also mention~\cite{McRaeYang2022} that studied the $\fsl_{2}$-type structure in the module category of the Virasoro VOA at central charge $25$.
The recent work~\cite{creutzig2023algebraic,lentner2025conditional} is building a general, and perhaps conceptual, framework towards the Kazhdan--Lusztig duality.
These works address the case when the quantum group parameter is a root of unity, but the methods would extend to the generic case as well.

In spite that the above mentioned works would cover our results, we believe that it is still worthwhile recording our proof because our method is different;
the feature of our method is that it relies on an actual construction of intertwining operators.
Another recent example of a~similar approach (to different problems) can be found in~\cite{nakano2024fusion}.

{\bf Organization of the paper.}
In the following Section~\ref{sect:quantum_group}, we give an account of the quantum group $\Uq(\fsl_{2})$
and fix several details of, e.g., the Clebsch--Gordan coefficients and the universal $R$-matrix.
In Section~\ref{sect:module_cat_VOA}, we recall the necessary background information on the module category of a VOA.
In particular, we quickly look at the Huang--Lepowsky theory that equips the module category of a VOA with the structure of a ribbon tensor category.
We focus our attention on the generic Virasoro VOA in Section~\ref{sect:generic_VirVOA} and review the known results from our previous work~\cite{KoshidaKytola2022}.
Section~\ref{sect:first-row_module_category} is the main part of this paper, where we define the first row module category of the generic Virasoro VOA and examine its structure in detail. Consequently, we will prove Theorem~\ref{thm:first-row-cat_QG}.
In Section~\ref{sect:C1-cofinite_module_category}, we study the category of $C_{1}$-cofinite modules of the generic Virasoro VOA and prove Theorem~\ref{thm:C1-cat_QG}.

\section{Quantum group}
\label{sect:quantum_group}
In this section, we give a brief overview of known facts about the quantum group $\Uq (\fsl_{2})$ and its representations.
We only consider the case where $q\in\bC^{\times}$ is not a root of unity and simply write $\Uq$ for $\Uq (\fsl_{2})$.
Further details about quantum groups can be found in \cite{chari1995guide, Kassel-quantum_groups,Lusztig1993}.

\subsection[Quantum group U\_q]{Quantum group $\boldsymbol{\Uq}$}
The algebra $\Uq$ is a unital associative $\bC$-algebra generated by
$K$, $K^{-1}$, $E$, and $F$ subject to the relations
\[
 KK^{-1}=K^{-1}K=1, \qquad
 KE=q^{2}EK, \qquad KF=q^{-2}FK, \qquad
 EF-FE=\frac{K-K^{-1}}{q-q^{-1}}.
 \]
We equip~$\Uq$ with the structure of a Hopf algebra in the following way.
The coproduct $\QGcoprod\colon \Uq\to \Uq\otimes\Uq$ is given by
\[
 \QGcoprod (K) :=K\otimes K, \qquad
 \QGcoprod (E) :=E\otimes 1+K\otimes E, \qquad
 \QGcoprod (F) :=F\otimes K^{-1} +1\otimes F.
 \]
The counit $\QGcounit \colon \Uq\to\bC$ and the antipode $\QGantipode\colon \Uq\to\Uq$ are defined as
\begin{gather*}
 \QGcounit (E)=\QGcounit (F)=0, \qquad \QGcounit (K)=1, \qquad
 \QGantipode (E)=-K^{-1}E, \\ \QGantipode (F)=-FK, \qquad \QGantipode (K)=K^{-1}.
\end{gather*}

\begin{rem}
The coproduct used in our previous work~\cite{KoshidaKytola2022} was the opposite: $\QGcoprod^{\op}=P\circ \QGcoprod$, where
$
 P\colon \Uq\otimes\Uq \to \Uq\otimes \Uq$; $ A\otimes B\to B\otimes A$
is the permutation of tensor components.
As we shall see, the structure constants of associativity ($6j$-symbols; see below) manifestly observed are associated with the opposite coproduct $\QGcoprod^{\op}$, but we will come back to the original $\QGcoprod$ in the end.
\end{rem}

\subsection{Irreducible representations}
For each $\ell\in \bZ_{\geq 0}$, $\QGirrep{\ell}$ is an $(\ell+1)$-dimensional complex vector space with a basis \smash{$\big(\QGirvec{\ell}{i}\big)_{i=0,1,\dots, \ell}$}.
We define a representation homomorphism
$
 \QGirmap{\ell}\colon \Uq\to \End (\QGirrep{\ell})
$
by
\begin{gather*}
 \QGirmap{\ell} (K) \QGirvec{\ell}{i} =q^{\ell-2i}\QGirvec{\ell}{i}, \qquad
 \QGirmap{\ell}(E) \QGirvec{\ell}{i} = \qint{i}\qint{\ell-i+1} \QGirvec{\ell}{i-1},\qquad
 \QGirmap{\ell} (F) \QGirvec{\ell}{i} = \QGirvec{\ell}{i+1}
\end{gather*}
for $i=0,1,\dots,\ell$.
Here, we understand \smash{$\QGirvec{\ell}{i}=0$} when $i<0$ or $i>\ell$.
The $q$-integers are defined~by
\[
 \qint{n}:=\frac{q^{n}-q^{-n}}{q-q^{-1}}, \qquad n\in \bZ.
\]

It is known that $(\QGirmap{\ell},\QGirrep{\ell})$, $\ell\in \bZ_{\geq 0}$ are irreducible representations of $\Uq$.
On the other hand, a~finite-dimensional irreducible representation of $\Uq$ is isomorphic to either $(\QGirmap{\ell},\QGirrep{\ell})$ or~${(\QGirmap{\ell}\circ\chi,\QGirrep{\ell})}$, where $\chi$ is the automorphism of $\Uq$ given by $\chi (K)=-K$, $\chi (E)=-E$, $\chi (F)=F$.
In the sequel, we always assume that the vector space $\QGirrep{\ell}$ is equipped with the representation homomorphism~$\QGirmap{\ell}$ and will not specify it. These irreducible representations $\QGirrep{\ell}$, $\ell\in\bZ_{\geq 0}$ are often referred to as type I.
Otherwise, the type I representations are characterized by that the eigenvalues of $K$ are of the form $q^{n}$ with some $n\in\bZ$.

Clearly, $\QGirrep{\ell}$ is generated by \smash{$\QGirvec{\ell}{0}$}, which we call a highest weight vector of highest weight $\ell$.
Accordingly, we say that $\QGirrep{\ell}$ is a highest weight irreducible representation of highest weight $\ell$.

\subsection{Tensor product}
For $\ell_{1}$, $\ell_{2}$, the tensor product $\QGirrep{\ell_{1}}\otimes \QGirrep{\ell_{2}}$ is equipped with the structure of a representation of~$\Uq$ by the coproduct $\QGcoprod$.
To emphasize the dependence on the coproduct, we write $\QGirrep{\ell_{1}}\otimes_{\QGcoprod}\QGirrep{\ell_{2}}$ for the tensor product representation.
It decomposes into irreducible representations according to the Clebsch--Gordan rule
\begin{gather}
\label{eq:QG_CGrule}
 \QGirrep{\ell_{1}}\otimes_{\QGcoprod} \QGirrep{\ell_{2}}\simeq \bigoplus_{\ell\in\Sel (\ell_{1},\ell_{2})}\QGirrep{\ell},
\end{gather}
where we defined the set
\begin{gather*}
 \Sel (\ell_{1},\ell_{2})= \{\ell\in \bZ_{\geq 0}\mid |\ell_{1}-\ell_{2}|\leq \ell \leq \ell_{1}+\ell_{2},\, \ell+\ell_{1}+\ell_{2}\equiv 0 \bmod 2 \}
\end{gather*}
of the highest weights appearing in the tensor product.

For each triple $(\ell,\ell_{1},\ell_{2})$ such that $\ell_{1},\ell_{2}\in \bZ_{\geq 0}$ and $\ell\in \Sel (\ell_{1},\ell_{2})$, we fix an embedding homomorphism \smash{$\CGemb{\ell}{\ell_{1}}{\ell_{2}}\colon \QGirrep{\ell}\to \QGirrep{\ell_{1}}\otimes_{\QGcoprod} \QGirrep{\ell_{2}}$} so that
\begin{gather}
 \CGemb{\ell}{\ell_{1}}{\ell_{2}}\big(\QGirvec{\ell}{0}\big)=\sum_{j=0}^{s}\CGcoeff{\ell}{\ell_{1}}{\ell_{2}}{j} \QGirvec{\ell_{1}}{j}\otimes \QGirvec{\ell_{2}}{s-j}, \nonumber \\
 \CGcoeff{\ell}{\ell_{1}}{\ell_{2}}{j}=(-1)^{j}\frac{\qint{\ell_{1}-j}!}{\qint{j}!\qint{s-j}!}\frac{\qint{\ell_{2}-s+j}!}{\qint{\ell_{1}}!\qint{\ell_{2}}!}\frac{q^{j(\ell_{1}-j+1)}}{\big(q-q^{-1}\big)^{s}},\qquad j=0,1,\dots, s,\label{eq:CG_coeffs}
\end{gather}
where we set $s=(\ell_{1}+\ell_{2}-\ell)/2$ and the $q$-factorials are defined by
\begin{gather*}
 \qint{n}!:=
 \begin{cases}
 \qint{n}\qint{n-1}\cdots\qint{1},& n\in\bZ_{> 0},\\
 1, &n=0.
 \end{cases}
\end{gather*}
Accordingly, the family of projections $\CGproj{\ell}{\ell_{1}}{\ell_{2}}\colon \QGirrep{\ell_{1}}\otimes_{\QGcoprod} \QGirrep{\ell_{2}}\to \QGirrep{\ell}$, $\ell_{1},\ell_{2}\in\bZ_{\geq 0}$, $\ell\in\Sel (\ell_{1},\ell_{2})$ is determined by the properties that
\begin{gather*}
 \CGproj{\ell}{\ell_{1}}{\ell_{2}}\circ \CGemb{\ell'}{\ell_{1}}{\ell_{2}}=
 \begin{cases}
 \id_{\QGirrep{\ell}}, & \ell=\ell', \\
 0, & \ell\neq\ell',
 \end{cases}\qquad \ell,\ell'\in \Sel (\ell_{1},\ell_{2}), \\
 \sum_{\ell\in\Sel(\ell_{1},\ell_{2})}\CGemb{\ell}{\ell_{1}}{\ell_{2}}\circ \CGproj{\ell}{\ell_{1}}{\ell_{2}}=\id_{\QGirrep{\ell_{1}}\otimes\QGirrep{\ell_{2}}},
\end{gather*}
for each $\ell_{1},\ell_{2}\in\bZ_{\geq 0}$.

Recall that there is another coproduct $\QGcoprod^{\op}$ on $\Uq$, with which we can form another tensor product representation $\QGirrep{\ell_{1}}\otimes_{\QGcoprod^{\op}}\QGirrep{\ell_{2}}$ for $\ell_{1},\ell_{2}\in\bZ_{\geq 0}$.
The rule of decomposition into irreducible representations is the same as in \eqref{eq:QG_CGrule}.
For each $\ell\in \Sel(\ell_{1},\ell_{2})$, we obtain an injective homomorphism $\CGembop{\ell}{\ell_{1}}{\ell_{2}}\colon \QGirrep{\ell}\to \QGirrep{\ell_{1}}\otimes_{\QGcoprod^{\op}}\QGirrep{\ell_{2}}$ by
\smash{$
 \CGembop{\ell}{\ell_{1}}{\ell_{2}}:=P_{\QGirrep{\ell_{2}},\QGirrep{\ell_{1}}}\circ \CGemb{\ell}{\ell_{2}}{\ell_{1}}$}.
Here, we write $P_{\VecspU,\VecspV}$ with vector spaces $\VecspU$ and $\VecspV$ for the permutation operator
$
 P_{\VecspU,\VecspV}\colon \VecspU\otimes\VecspV\to \VecspV\otimes\VecspU$; $ u\otimes v\mapsto v\otimes u$.

\subsection{Representation category}
Here we describe the category of $\Uq$-modules as a ribbon tensor category.
The general theory of tensor categories can be found in \cite{BakalovKirillovJr2001,etingof2015tensor}.

\subsubsection{Linear category}
We write $\catMod (\Uq)$ for the category of finite-dimensional representations of $\Uq$, which is clearly a~$\bC$-linear category.
As we have already noted, the simple objects of $\catMod(\Uq)$ are exhausted by~$\QGirrep{\ell}$,~$\ell\in\bZ_{\geq 0}$ and their twist by the automorphism $\chi$ up to isomorphism.
Under our assumption that $q$ is not a root of unity, it is also known that $\catMod(\Uq)$ is semi-simple although it has infinitely many simple objects.

We define $\catQG$ as the full subcategory of $\catMod(\Uq)$ generated by $\QGirrep{\ell}$, $\ell\in\bZ_{\geq 0}$.
In other words, each object of $\catQG$ is isomorphic to a finite direct sum of $\QGirrep{\ell}$, $\ell\in\bZ_{\geq 0}$.

\subsubsection{Tensor structure}
We can equip the category $\catMod (\Uq)$ with the monoidal bifunctor $-\otimes_{\QGcoprod}-\colon \catMod(\Uq)\times\catMod(\Uq)\to \catMod(\Uq)$ defined by means of the coproduct $\QGcoprod$.
The associativity isomorphisms are given by
\begin{gather*}
 \alpha_{\QGrepU,\QGrepV,\QGrepW}\colon\ (\QGrepU\otimes_{\QGcoprod}\QGrepV)\otimes_{\QGcoprod} \QGrepW\to \QGrepU\otimes_{\QGcoprod} (\QGrepV\otimes_{\QGcoprod}\QGrepW);\qquad (u\otimes v)\otimes w\mapsto u\otimes (v\otimes w)
\end{gather*}
for $\QGrepU,\QGrepV,\QGrepW\in\catMod (\Uq)$,
i.e., the associativity for the underlying vector spaces.
We can take $\QGirrep{0}$ as a unit object with respect to this tensor product and choose unit isomorphisms $\lambda_{\QGrepU}\colon \QGirrep{0}\otimes_{\QGcoprod} \QGrepU\to\QGrepU$ and $\rho_{\QGrepU}\colon \QGrepU\otimes_{\QGcoprod}\QGirrep{0}\to\QGrepU$.
In this way, $(\otimes_{\QGcoprod},\alpha,\QGirrep{0},\lambda,\rho)$ defines a monoidal structure on the category~$\catMod (\Uq)$.
Since $\Uq$ has an invertible antipode, $\catMod(\Uq)$ is also rigid.
Therefore, $\catMod(\Uq)$ is a~tensor category.

Since the subcategory $\catQG$ is closed under the tensor product and the dual, it is a tensor subcategory of $\catMod (\Uq)$.
We suppose that the unit isomorphisms are chosen in such a way that~${\lambda_{\QGirrep{\ell}}=\CGproj{\ell}{0}{\ell}}$ and $\rho_{\QGirrep{\ell}}=\CGproj{\ell}{\ell}{0}$ for all $\ell\in\bZ_{\geq 0}$.
Then, the monoidal structure on $\catQG$ is uniquely determined.
As a monoidal category, $\catQG$ starts depending on the parameter $q$.
Thus, we write~$\catQG (q)$ for the category $\catQG$ equipped with the above tensor structure.

If we take the other coproduct $\QGcoprod^{\op}$, the same underlying category $\catQG$ is equipped with another monoidal structure $(\otimes_{\QGcoprod^{\op}},\alpha^{\op},\QGirrep{0},\lambda^{\op},\rho^{\op})$.
This tensor category will be denoted by~$\catQG^{\op}(q)$.
Note that, however, as a linear map, each associativity isomorphism $\alpha^{\op}_{\QGrepU,\QGrepV,\QGrepW}$, $\QGrepU,\QGrepV,\QGrepW\in \catQG$, can be taken as the same $\alpha_{\QGrepU,\QGrepV,\QGrepW}$.

At this point, we can discuss the so-called $6j$-symbols.
In our context, the $6j$-symbols of~$\catQG^{\op}(q)$, instead of those of $\catQG (q)$, will be more manifest.
For an arbitrary choice of four $\ell_{1},\ell_{2},\ell_{3},\ell_{4}\in\bZ_{\geq 0}$, we define the set
\smash{$
 \Chan{\ell_{1}}{\ell_{2}}{\ell_{3}}{\ell_{4}}:=\Sel (\ell_{1},\ell_{2})\cap\Sel (\ell_{3},\ell_{4})$}.
Now, we fix $\ell_{1},\ell_{2},\ell_{3},\ell_{4}\in\bZ_{\geq 0}$
and compare two spaces of homomorphisms:
one is $\Hom_{\Uq}(\QGirrep{\ell_{4}},(\QGirrep{\ell_{1}}\otimes_{\QGcoprod^{\op}} \QGirrep{\ell_{2}})\otimes_{\QGcoprod^{\op}} \QGirrep{\ell_{3}})$, which has a basis
\begin{gather*}
 \big(\CGembop{m}{\ell_{1}}{\ell_{2}}\otimes_{\QGcoprod^{\op}} \id_{\QGirrep{\ell_{3}}}\big)\circ \CGembop{\ell_{4}}{m}{\ell_{3}}
 =\hackcenter{
 \begin{tikzpicture}
 \draw[thick] (0,-1)--(0,0);
 \draw[thick] (0,0)--(-1,1);
 \draw[thick] (0,0)--(1,1);
 \draw[thick] (0,1)--(-.5,.5);
 \draw (0,-1)node[below]{$\ell_{4}$};
 \draw (-1,1)node[above]{$\ell_{1}$};
 \draw (0,1)node[above]{$\ell_{2}$};
 \draw (1,1)node[above]{$\ell_{3}$};
 \draw (-.25,.25)node[below left]{$m$};
 \end{tikzpicture}
 },\qquad m\in \Chan{\ell_{1}}{\ell_{2}}{\ell_{3}}{\ell_{4}}
\end{gather*}
and the other is $\Hom_{\Uq}(\QGirrep{\ell_{4}},\QGirrep{\ell_{1}}\otimes_{\QGcoprod^{\op}} (\QGirrep{\ell_{2}}\otimes_{\QGcoprod^{\op}} \QGirrep{\ell_{3}}))$, which has a basis
\begin{gather*}
 \big(\id_{\QGirrep{\ell_{1}}}\otimes_{\QGcoprod^{\op}} \CGembop{n}{\ell_{2}}{\ell_{3}}\big)\circ \CGembop{\ell_{4}}{\ell_{1}}{n}
 =\hackcenter{
 \begin{tikzpicture}
 \draw[thick] (0,-1)--(0,0);
 \draw[thick] (0,0)--(-1,1);
 \draw[thick] (0,0)--(1,1);
 \draw[thick] (0,1)--(.5,.5);
 \draw (0,-1)node[below]{$\ell_{4}$};
 \draw (-1,1)node[above]{$\ell_{1}$};
 \draw (0,1)node[above]{$\ell_{2}$};
 \draw (1,1)node[above]{$\ell_{3}$};
 \draw (.25,.25)node[below right]{$n$};
 \end{tikzpicture}
 },\qquad n\in \Chan{\ell_{2}}{\ell_{3}}{\ell_{1}}{\ell_{4}}.
\end{gather*}
Here we also drew diagrams representing the homomorphisms.
Each trivalent vertex depicts an injection and the composition is read from bottom to top.
The associativity isomorphism induces an isomorphism of these vector spaces of morphisms
\begin{gather*}
 \Hom_{\Uq}(\QGirrep{\ell_{4}},(\QGirrep{\ell_{1}}\otimes_{\QGcoprod^{\op}} \QGirrep{\ell_{2}})\otimes_{\QGcoprod^{\op}} \QGirrep{\ell_{3}})\to \Hom_{\Uq}(\QGirrep{\ell_{4}},\QGirrep{\ell_{1}}\otimes_{\QGcoprod^{\op}} (\QGirrep{\ell_{2}}\otimes_{\QGcoprod^{\op}} \QGirrep{\ell_{3}})), \\
 f \mapsto \alpha^{\op}_{\QGirrep{\ell_{1}},\QGirrep{\ell_{2}},\QGirrep{\ell_{3}}}\circ f.
\end{gather*}
Our $6j$-symbols
\[
\sixjsymb{\ell_{1}}{\ell_{2}}{\ell_{3}}{\ell_{4}}{m}{n},\qquad m\in \Chan{\ell_{1}}{\ell_{2}}{\ell_{3}}{\ell_{4}},\qquad n\in \Chan{\ell_{2}}{\ell_{3}}{\ell_{1}}{\ell_{4}}
\]
 are defined as the matrix elements of the {\it inverse} of this isomorphism in terms of the bases introduced above.
Explicitly, they are defined by
\begin{gather}
 \big(\alpha^{\op}_{\QGirrep{\ell_{1}},\QGirrep{\ell_{2}},\QGirrep{\ell_{3}}}\big)^{-1}\circ \big(\id_{\QGirrep{\ell_{1}}}\otimes_{\QGcoprod^{\op}} \CGembop{n}{\ell_{2}}{\ell_{3}}\big)\circ \CGembop{\ell_{4}}{\ell_{1}}{n}\nonumber \\
 \qquad =
 \sum_{m\in \Chan{\ell_{1}}{\ell_{2}}{\ell_{3}}{\ell_{4}}}
 \sixjsymb{\ell_{1}}{\ell_{2}}{\ell_{3}}{\ell_{4}}{m}{n}
 \big(\CGembop{m}{\ell_{1}}{\ell_{2}}\otimes_{\QGcoprod^{\op}} \id_{\QGirrep{\ell_{3}}}\big)\circ \CGembop{\ell_{4}}{m}{\ell_{3}}\label{eq:QG_6j_definition}
\end{gather}
for \smash{$n\in \Chan{\ell_{2}}{\ell_{3}}{\ell_{1}}{\ell_{4}}$}, or diagramatically
\begin{gather*}
 \big(\alpha^{\op}_{\QGirrep{\ell_{1}},\QGirrep{\ell_{2}},\QGirrep{\ell_{3}}}\big)^{-1}\circ -\colon\
 \hackcenter{
 \begin{tikzpicture}
 \draw[thick] (0,-1)--(0,0);
 \draw[thick] (0,0)--(-1,1);
 \draw[thick] (0,0)--(1,1);
 \draw[thick] (0,1)--(.5,.5);
 \draw (0,-1)node[below]{$\ell_{4}$};
 \draw (-1,1)node[above]{$\ell_{1}$};
 \draw (0,1)node[above]{$\ell_{2}$};
 \draw (1,1)node[above]{$\ell_{3}$};
 \draw (.25,.25)node[below right]{$n$};
 \end{tikzpicture}
 }\mapsto
 \sum_{m\in \Chan{\ell_{1}}{\ell_{2}}{\ell_{3}}{\ell_{4}}}
 \sixjsymb{\ell_{1}}{\ell_{2}}{\ell_{3}}{\ell_{4}}{m}{n}
 \hackcenter{
 \begin{tikzpicture}
 \draw[thick] (0,-1)--(0,0);
 \draw[thick] (0,0)--(-1,1);
 \draw[thick] (0,0)--(1,1);
 \draw[thick] (0,1)--(-.5,.5);
 \draw (0,-1)node[below]{$\ell_{4}$};
 \draw (-1,1)node[above]{$\ell_{1}$};
 \draw (0,1)node[above]{$\ell_{2}$};
 \draw (1,1)node[above]{$\ell_{3}$};
 \draw (-.25,.25)node[below left]{$m$};
 \end{tikzpicture}
 } .
\end{gather*}

\begin{rem}
The dependence of $\catQG^{\op}(q)$ (and $\catQG(q)$ as well) on the parameter $q$ is manifest from the fact that the $6j$-symbols depend on $q$.
There are numeric presentations of the $6j$-symbols found in~\cite{frenkel1997canonical,masbaum19943},
though they are not necessary for our purpose.
\end{rem}

\subsubsection{Braiding and twist}
Here we consider the quantum group $\Uq$ to be equipped with the coproduct $\QGcoprod$ (but not $\QGcoprod^{\op}$.) It admits the universal $R$-matrix given by the following formula:
\begin{gather*}
 \cR=q^{\frac{1}{2}H\otimes H}\sum_{n=0}^{\infty}\frac{q^{n(n-1)/2}\big(q-q^{-1}\big)^{n}}{\qint{n}!}(F^{n}\otimes E^{n}),
\end{gather*}
which makes sense in a certain completion of $\Uq\otimes\Uq$.
Here, $H$ is the symbol that is supposed to behave as $K=q^{H}$.
Its action on $\QGrepU\otimes_{\QGcoprod}\QGrepV$ with $\QGrepU,\QGrepV\in \catQG(q)$ is well defined.
Indeed, since these representations are finite-dimensional, the infinite sum in the formula of $\univR$ truncates to a~finite sum, and for $u\in\QGrepU$ and $v\in \QGrepV$ such that $K.u=q^{m}u$ and $K.v=q^{n}v$, we may understand
\begin{gather*}
 q^{\frac{1}{2}H\otimes H}.u\otimes v=q^{\frac{mn}{2}}u\otimes v.
\end{gather*}
We write $R_{\QGrepU,\QGrepV}$ for the action of $\univR$ on $\QGrepU\otimes_{\QGcoprod}\QGrepV$
and set
\begin{gather*}
 \braid_{\QGrepU,\QGrepV}:=P_{\QGrepU,\QGrepV}R_{\QGrepU,\QGrepV}\colon\ \QGrepU\otimes_{\QGcoprod}\QGrepV\to\QGrepV\otimes_{\QGcoprod}\QGrepU.
\end{gather*}
The family of morphisms $\braid_{\QGrepU,\QGrepV}$, $\QGrepU,\QGrepV\in \catQG (q)$ gives a braiding structure to $\catQG (q)$.

There is a standard procedure to read a twist isomorphism out of the universal $R$-matrix~\cite[Chapter~XIV.6]{Kassel-quantum_groups}.
For each $\QGrepU\in\catQG (q)$, we can define the twist isomorphism $\cattwist_{\QGrepU}\colon \QGrepU\to \QGrepU$ by
\begin{gather}
\label{eq:QGtwist_inverse}
 \cattwist_{\QGrepU}^{-1}\QGrepvec=(-1)^{H}K\sum_{n=0}^{\infty}\frac{q^{n(n-1)/2}\big(q-q^{-1}\big)^{n}}{\qint{n}!}\big(-K^{-1}E\big)^{n}q^{-\frac{1}{2}H^{2}}F^{n}\QGrepvec,\qquad \QGrepvec\in\QGrepU.
\end{gather}
It turns out that this twist isomorphism gives a ribbon structure.

\section{Category of modules for a VOA}
\label{sect:module_cat_VOA}
In this section, we review the construction of a tensor product, braiding and a ribbon structure on the category of modules for a VOA developed in \cite{Huang1995,Huang2005,HuangLepowsky1992,HuangLepowsky1994,Huang--LepowskyI,Huang--LepowskyII,Huang--LepowskyIII,huang1999intertwining}.
For a concise account, \cite[Section 2]{Huang--Kirillov--Lepowsky} is also helpful.

\subsection{VOA and modules}
Here, we record the definitions of a VOA and its modules following, e.g.,~\cite{FLM-VOAs_Monster,Lepowsky_Li-VOA}, but equivalent sets of axioms are also found in~\cite{FrenkelBen-Zvi2004,Kac-vertex_algebras}.

First, we recall the Virasoro algebra $\vir$.
It is the infinite-dimensional Lie algebra $\vir=\bigoplus_{n\in\bZ}\bC L_{n}\oplus \bC C$ with the Lie bracket
\begin{align*}
 [L_{m},L_{n}]=(m-n)L_{m+n}+\frac{m^{3}-m}{12}\delta_{m+n,0}C,\qquad m,n\in\bZ, \qquad
 [C,\vir] = 0.
\end{align*}

We also need to fix a convention with formal calculus to phrase the definition of a VOA.
That is, the formal series $(x-y)^{n}$ with $n\in\mathbb{Z}$ in the two variables $x$ and $y$ is defined by
\begin{align*}
	(x-y)^{n} \coloneqq \sum_{k=0}^{\infty}\binom{n}{k}x^{n-k}(-y)^{k}.
\end{align*}
In particular, $(x-y)^{n}$ and $(-1)^{n}(y-x)^{n}$ are different formal series when $n<0$.
We need this convention to interpret the expression such as
$
	\delta \big(\frac{x_{1}-x_{2}}{x_{0}}\big)$.
Here, $\delta (x) \coloneqq \sum_{n\in\bZ}x^{n}$ is called the formal $\delta$-distribution,
and we understand
\begin{align*}
	\delta \left(\frac{x_{1}-x_{2}}{x_{0}}\right)
	=\sum_{n\in\bZ}x_{0}^{-n}(x_{1}-x_{2})^{n}
	=\sum_{n\in\bZ}\sum_{k=0}^{\infty}\binom{n}{k}x_{0}^{-n}x_{1}^{n-k}(-x_{2})^{k}.
\end{align*}

The definition of a VOA goes as follows.
\begin{defn}
A VOA consists of a $\bZ$-graded vector space $\voagen=\bigoplus_{n\in\bZ}\voagen_{n}$ such that $\dim\voagen_{n}<\infty$, $n\in\bZ$,
distinguished vectors $\voavac\in\voagen_{0}$ and $\voaconf\in\voagen_{2}$,
and a linear map
\begin{align*}
	\voaY(-,x)\colon \ \voagen\to \End(\voagen)\big[\big[x^{\pm 1}\big]\big],\qquad
	 a \mapsto \voaY (a,x) = \sum_{n\in \bZ}a_{(n)}x^{-n-1}
\end{align*}
subject to the following axioms:
\begin{enumerate}\itemsep=0pt
\item[(1)] 	(Field condition) For all $a,b\in \voaV$, $a_{(n)}b=0$ for all sufficiently large $n\gg 0$.
\item[(2)] 	(Vacuum axiom) $\voaY(\voavac,x) = \id$ and
$
		\voaY (a,x)\voavac \in a + \voaV [[x]]x
$
	for all $a\in \voaV$.
\item[(3)] 	(Virasoro representation) Define \smash{$L^{\voaV}_{n} = \voaconf_{(n+1)}\in \End(\voaV)$}, $n\in\bZ$. Then, the assignment $L_{n}\mapsto L^{\voaV}_{n}$, $n\in\bZ$, $C\mapsto c\cdot \id$ with a scalar $c\in \bC$
gives a representation of $\vir$.
Furthermore, $L^{\voaV}_{0}|_{\voaV_{n}}=n\id_{\voaV_{n}}$, $n\in\bZ$ and $L^{\voaV}_{-1}a = a_{(-2)}\voavac$, $a\in \voaV$.
\item[(4)] 	(Translation property) For all $a\in \voaV$,
	\begin{align*}
		\big[L_{-1}^{\voaV}, \voaY(a,x)\big] = \frac{\rm d}{{\rm d}x}\voaY(a,x).
	\end{align*}
\item[(5)] 	(Jacobi identity) For all $a,b\in \voaV$,
	\begin{gather*}
 x_{0}^{-1}\delta \left(\frac{x_{1}-x_{2}}{x_{0}}\right)\voaY(a,x_{1})\voaY (b,x_{2})-x_{0}^{-1}\delta\left(\frac{x_{2}-x_{1}}{-x_{0}}\right)\voaY (b,x_{2})\voaY(a,x_{1}) \\
 \qquad = x_{2}^{-1}\delta\left(\frac{x_{1}-x_{0}}{x_{2}}\right)\voaY\left(\voaY(a,x_{0})b,x_{2}\right).
\end{gather*}
\end{enumerate}
\end{defn}

The distinguished vectors $\voavac$ and $\voaconf$ are called the vacuum and the conformal vectors, respectively.
The linear map $\voaY(-,x)$ is called the state-field correspondence map, and
the scalar $c$ that appeared in the Virasoro representation is called the central charge of the VOA.
Though a VOA is a quadruple $(\voaV, \voavac,\voaconf,\voaY)$,
it is common to only refer to $\voaV$ to address a VOA.
We will introduce the particular example of our interest in Section~\ref{sect:generic_VirVOA}.

The modules of a VOA are defined as follows.
\begin{defn}
For a VOA $\voagen$, a $\voagen$-module is a pair of a $\bC$-graded vector space $\voamodgen=\bigoplus_{\alpha \in\bC}\voamodgen_{\alpha}$ such that $\dim\voamodgen_{\alpha}<\infty$ and $W_{\alpha-n}=0$, $n\gg 0$ for all $\alpha\in\bC$ and a linear map
\begin{gather*}
	\voaY_{\voamodgen}(-,x)\colon \ \voagen \to \End (\voamodgen) \big[\big[x^{\pm 1}\big]\big], \qquad
	 a \mapsto \voaY_{\voamodgen}(a,x) = \sum_{n\in\bZ}a^{\voamodgen}_{(n)}x^{-n-1}
\end{gather*}
subject to the following axioms:
\begin{enumerate}\itemsep=0pt
\item[(1)] 	(Field condition) For all $a\in \voagen$ and $w\in \voamodgen$, $a^{\voamodgen}_{(n)}w=0$ for all sufficiently large $n\gg 0$.
\item[(2)] 	(Vacuum axiom) $\voaY_{\voamodgen}(\voavac,x)=\id_{\voamodgen}$.
\item[(3)] 	($L_{0}$-action) For all $\alpha\in\bC$, $\voamodgen_{\alpha}$ is a generalized eigenspace of \smash{$L_{0}^{\voamodgen}\coloneqq\voaconf^{\voamodgen}_{(1)}$} with eigenvalue~$\alpha$. In other words, $L^{\voamodgen}_{0}-\alpha$ is nilpotent on $\voamodgen_{\alpha}$.
\item[(4)] 	(Jacobi identity) For all $a,b\in \voaV$,
	\begin{gather*}
 x_{0}^{-1}\delta \left(\frac{x_{1}-x_{2}}{x_{0}}\right)\voaY_{\voamodgen}(a,x_{1})\voaY_{\voamodgen} (b,x_{2})-x_{0}^{-1}\delta\left(\frac{x_{2}-x_{1}}{-x_{0}}\right)\voaY_{\voamodgen} (b,x_{2})\voaY_{\voamodgen}(a,x_{1}) \\
\qquad = x_{2}^{-1}\delta\left(\frac{x_{1}-x_{0}}{x_{2}}\right)\voaY_{\voamodgen}\left(\voaY(a,x_{0})b,x_{2}\right).
\end{gather*}
\end{enumerate}
\end{defn}

When we set \smash{$L^{\voamodgen}_{n}=\voaconf^{\voamodgen}_{(n+1)}$}, $n\in\bZ$,
and $c$ to be the same central charge as $\voagen$ itself,
the assignment $L_{n}\mapsto L_{n}^{\voamodgen}$, $n\in\bZ$, $C\mapsto c\cdot \id_{\voamodgen}$ gives a representation of $\vir$.

\subsection{Intertwining operators/maps}
Finally, we come to the notion of intertwining operators.
\begin{defn}
Let $\voagen$ be a VOA and $\voamodgen_{1}$, $\voamodgen_{2}$, $\voamodgen_{3}$ be $\voagen$-modules.
An intertwining operator of type \smash{$\inttyp{\voamodgen_{1}}{\voamodgen_{2}}{\voamodgen_{3}}$} is a linear map
\begin{gather*}
 \voaint (-,x)\colon \ \voamodgen_{1}\to\Hom (\voamodgen_{2},\voamodgen_{3})\{x\}
\end{gather*}
satisfying the Jacobi identity
\begin{gather*}
 x_{0}^{-1}\delta \left(\frac{x_{1}-x_{2}}{x_{0}}\right)\voaY_{\voamodgen_{3}}(\voavec,x_{1})\voaint (\vmodvec_{1},x_{2})-x_{0}^{-1}\delta\left(\frac{x_{2}-x_{1}}{-x_{0}}\right)\voaint (\vmodvec_{1},x_{2})\voaY_{\voamodgen_{2}}(\voavec,x_{1}) \\
 \qquad = x_{2}^{-1}\delta\left(\frac{x_{1}-x_{0}}{x_{2}}\right)\voaint\left(\voaY_{\voamodgen_{1}}(\voavec,x_{0})\vmodvec_{1},x_{2}\right)
\end{gather*}
for all $\voavec\in \voagen$ and $\vmodvec_{1}\in\voamodgen_{1}$,
and the $L_{-1}$-derivation property
\begin{gather*}
 \voaint\big(L_{-1}^{\voamodgen_{1}}\vmodvec_{1},x\big)=\frac{\rm d}{{\rm d}x}\voaint (\vmodvec_{1},x)
\end{gather*}
for all $\vmodvec_{1}\in\voamodgen_{1}$.
The set of intertwining operators of type \smash{$\inttyp{\voamodgen_{1}}{\voamodgen_{2}}{\voamodgen_{3}}$} is a vector space denoted by~\smash{$\intsp{\voamodgen_{1}}{\voamodgen_{2}}{\voamodgen_{3}}$}, whose dimension is called the fusion rule of that type.
\end{defn}

\begin{rem}
\label{rem:intertw_special_cases}
If all three modules are $\voagen$ itself, $\intsp{\voagen}{\voagen}{\voagen}$ contains the state-field correspondence map $\voaY(-,x)$.
If $\voamodgen_{1}=\voagen$ and $\voamodgen_{2}=\voamodgen_{3}=\voamodgen$, then the space of intertwining operators $\intsp{\voagen}{\voamodgen}{\voamodgen}$ contains the module map $\voaY_{\voamodgen}(-,x)$.
\end{rem}

Later, we will need the following transformation of intertwining operators.
Let $\voaint$ be an intertwining operator of type $\inttyp{\voamodgen_{1}}{\voamodgen_{2}}{\voamodgen_{3}}$ with $\voamodgen_{1}$, $\voamodgen_{2}$, and $\voamodgen_{3}$ being modules for a VOA $\voagen$.
Define
\begin{gather}
 (\Omega\voaint)(-,x)\colon\ \voamodgen_{2}\to\Hom (\voamodgen_{1},\voamodgen_{3})\{x\}\label{eq:int_flip}
\end{gather}
by
\begin{gather*}
 (\Omega\voaint)(\vmodvec_{2},x)\vmodvec_{1}:={\rm e}^{xL^{\voamodgen_{3}}_{-1}}\voaint\big(\vmodvec_{1},{\rm e}^{\pi\bbmi}x\big)\vmodvec_{2},\qquad \vmodvec_{1}\in\voamodgen_{1}, \qquad \vmodvec_{2}\in\voamodgen_{2}.
\end{gather*}
Then, we can show~\cite[Proposition~7.1]{Huang--LepowskyII} that $\Omega\voaint$ is an intertwining operator of type $\inttyp{\voamodgen_{2}}{\voamodgen_{1}}{\voamodgen_{3}}$.
Note that $\Omega$ is invertible.

For $z\in\bC^{\times}$, we define $\log (z)$ so that $\arg (z)\in [0,2\pi)$.
Then specializing the formal variable in an intertwining operator $\voaint$ at $x={\rm e}^{\log (z)}$ makes sense, giving rise to the linear map
\begin{gather*}
 \intmap_{\voaint,z}\colon\ \voamodgen_{1}\otimes\voamodgen_{2}\to\ol{\voamodgen_{3}};\qquad \vmodvec_{1}\otimes\vmodvec_{2}\to \voaint (\vmodvec_{1},x)\vmodvec_{2}|_{x={\rm e}^{\log (z)}},
\end{gather*}
where we define the completion of $\voamodgen_{3}$ by \smash{$\ol{\voamodgen_{3}}=\prod_{n\in\bZ_{\geq 0}}(\voamodgen_{3})_{n}$}.
In the sequel, we simply write
\begin{gather*}
 \voaint (\vmodvec_{1},z)\vmodvec_{2}=\voaint (\vmodvec_{1},x)\vmodvec_{2}|_{x={\rm e}^{\log (z)}}
\end{gather*}
for the evaluation of the formal variable as long as the branch is chosen as $\arg z\in [0,2\pi)$.
We call such a linear map obtained from an intertwining operator an {\it intertwining map} associated to~$z$.
Since the correspondence $\voaint\mapsto \intmap_{\voaint,z}$ is one-to-one, we could say that they are the same notion, but intertwining maps are still more convenient when we define the tensor product of modules.

\subsection[P(z)-tensor product]{$\boldsymbol{ P(z)}$-tensor product}
Let $\voamodgen_{1}$ and $\voamodgen_{2}$ be $\voagen$-modules, and fix $z\in\bC^{\times}$.
A $P(z)$-product of $\voamodgen_{1}$ and $\voamodgen_{2}$ is a $\voagen$-module~$\voamodgen_{3}$ together with an intertwining map $\intmap_{z}$ of type $\inttyp{\voamodgen_{1}}{\voamodgen_{2}}{\voamodgen_{3}}$ associated to $z$.
The $P(z)$-{\it tensor} product is a universal object among $P(z)$-products.
To be precise, the $P(z)$-tensor product of~$\voamodgen_{1}$ and $\voamodgen_{2}$ is the $P(z)$-product $(\voamodgen_{1}\boxtimes_{P(z)}\voamodgen_{2}, \boxtimes_{P(z)})$
such that for any $P(z)$-product $(\voamodgen_{3},\intmap_{z})$ of $\voamodgen_{1}$ and $\voamodgen_{2}$, there exists a unique $\voagen$-module homomorphism $\eta\colon \voamodgen_{1}\boxtimes_{P(z)}\voamodgen_{2}\to \voamodgen_{3}$ satisfying~${\intmap_{z}=\ol{\eta}\circ \boxtimes_{P(z)}}$
\begin{align*}
\xymatrix{
\voamodgen_{1}\otimes\voamodgen_{2} \ar[r]^{\boxtimes_{P(z)}} \ar[dr]_{F_{z}} & \voamodgen_{1}\boxtimes_{P(z)}\voamodgen_{2} \ar@{..>}[d]^{\exists!\eta} \\
& \voamodgen_{3}.
}
\end{align*}
It is standard to show that the $P(z)$-tensor product is unique up to isomorphism if it exists.
We note that the $P(z)$-tensor product depends on the choice of a category of modules.

For the $P(z)$-tensor product $(\voamodgen_{1}\boxtimes_{P(z)}\voamodgen_{2}, \boxtimes_{P(z)})$,
there exists a unique intertwining operator~${\voaint (-,x)}$ of type~\smash{$\inttyp{\voamodgen_{1}}{\voamodgen_{2}}{\voamodgen_{1}\boxtimes_{P(z)}\voamodgen_{2}}$} such that
$
 \vmodvec_{1}\boxtimes_{P(z)}\vmodvec_{2}=\voaint (\vmodvec_{1},z)\vmodvec_{2}$, $ \vmodvec_{1}\in\voamodgen_{1}, \vmodvec_{2}\in\voamodgen_{2}$.

The $P(z)$-tensor product defines a bifunctor on the category of modules of interest.
\!To~see~this, let $\voamodgen_{1}$, $\voamodgen_{2}$, $\voamodgen_{3}$, $\voamodgen_{4}$ be $\voagen$-modules and let $f\colon \voamodgen_{1}\to\voamodgen_{3}$ and $g\colon\voamodgen_{2}\to\voamodgen_{4}$ be morphisms.
Suppose that the $P(z)$-tensor products \smash{$\big(\voamodgen_{1}\boxtimes_{P(z)}\voamodgen_{2},\boxtimes_{P(z)}^{12}\big)$} and \smash{$\big(\voamodgen_{3}\boxtimes_{P(z)}\voamodgen_{4},\boxtimes_{P(z)}^{34}\big)$} exist.
Then the object $\voamodgen_{3}\boxtimes_{P(z)}\voamodgen_{4}$ together with $\boxtimes_{P(z)}^{34}\circ (f\otimes g)$ gives a $P(z)$-product of $\voamodgen_{1}$ and~$\voamodgen_{2}$.
This means that there exists a unique morphism denoted by $f\boxtimes_{P(z)} g\colon \voamodgen_{1}\boxtimes_{P(z)}\voamodgen_{2}\to \voamodgen_{3}\boxtimes_{P(z)}\voamodgen_{4}$ such that
\begin{gather*}
 \boxtimes^{34}_{P(z)}\circ (f\otimes g)=\ol{f\boxtimes_{P(z)}g}\circ \boxtimes_{P(z)}^{12}.
\end{gather*}

\subsection{Composition and iteration}
Let $\voamodgen_{1}$, $\voamodgen_{2}$, $\voamodgen_{3}$ be $\voagen$-modules and $z_{1},z_{2}\in \bC^{\times}$ be such that $|z_{1}|>|z_{2}|>|z_{1}-z_{2}|>0$.
In this setting, let us assume that the composition
$
 \voamodgen_{1}\boxtimes_{P(z_{1})}(\voamodgen_{2}\boxtimes_{P(z_{2})}\voamodgen_{3})
$
exists.
Then, associated with it, we have the composition of intertwining maps
\begin{gather*}
 \voaint^{1}(-,z_{1})\voaint^{2}(-,z_{2})-\colon\ \voamodgen_{1}\otimes \voamodgen_{2}\otimes\voamodgen_{3}\to \ol{\voamodgen_{1}\boxtimes_{P(z_{1})}(\voamodgen_{2}\boxtimes_{P(z_{2})}\voamodgen_{3})},
\end{gather*}
where
\begin{gather*}\voaint^{1}(-,x)\in \intsp{\voamodgen_{1}}{\voamodgen_{2}\boxtimes_{P(z_{2})}\voamodgen_{3}}{\voamodgen_{1}\boxtimes_{P(z_{1})}(\voamodgen_{2}\boxtimes_{P(z_{2})}\voamodgen_{3})}\qquad \mbox{and}\qquad
\voaint^{2}(-,x)\in \intsp{\voamodgen_{2}}{\voamodgen_{3}}{\voamodgen_{2}\boxtimes_{P(z_{2})}\voamodgen_{3}}
\end{gather*}
are the corresponding intertwining operators.
For $\vmodvec_{i}\in\voamodgen_{i}$, $i=1,2,3$, we can identify
\begin{gather*}
 \vmodvec_{1}\boxtimes_{P(z_{1})}(\vmodvec_{2}\boxtimes_{P(z_{2})}\vmodvec_{3})=\voaint^{1}(\vmodvec_{1},z_{1})\voaint^{2}(\vmodvec_{2},z_{2})\vmodvec_{3}.
\end{gather*}
Note that composition of intertwining maps does not automatically make sense,
but we must verify that the infinite sum appearing in the composition absolutely converges.

Let us also assume that the iteration
$
 (\voamodgen_{1}\boxtimes_{P(z_{1}-z_{2})}\voamodgen_{2})\boxtimes_{P(z_{2})}\voamodgen_{3}
$
exists.
In this case, we get the iteration of the corresponding intertwining maps
\begin{gather*}
 \voaint_{1} (\voaint_{2}(-,z_{1}-z_{2})-,z_{2} )-\colon\ \voamodgen_{1}\otimes \voamodgen_{2}\otimes \voamodgen_{3}\to \ol{(\voamodgen_{1}\boxtimes_{P(z_{1}-z_{2})}\voamodgen_{2})\boxtimes_{P(z_{2})}\voamodgen_{3}},
\end{gather*}
where
\begin{gather*}
 \voaint_{1}(-,x)\in\intsp{\voamodgen_{1}\boxtimes_{P(z_{1}-z_{2})}\voamodgen_{2}}{\voamodgen_{3}}{(\voamodgen_{1}\boxtimes_{P(z_{1}-z_{2})}\voamodgen_{2})\boxtimes_{P(z_{2})}\voamodgen_{3}},\qquad
 \voaint_{2}(-,x)\in \intsp{\voamodgen_{1}}{\voamodgen_{2}}{\voamodgen_{1}\boxtimes_{P(z_{1}-z_{2})}\voamodgen_{2}}.
\end{gather*}
We can make the identification
\begin{gather*}
 (\vmodvec_{1}\boxtimes_{P(z_{1}-z_{2})}\vmodvec_{2})\boxtimes_{P(z_{2})}\vmodvec_{3}=\voaint_{1}\left(\voaint_{2}(\vmodvec_{1},z_{1}-z_{2})\vmodvec_{2},z_{2}\right)\vmodvec_{3}
\end{gather*}
for $\vmodvec_{i}\in \voamodgen_{i}$, $i=1,2,3$.
Similarly to the composition, we need to verify that the iteration of intertwining maps makes sense.

The associativity isomorphism
\begin{gather*}
 \big(\cA_{P(z_{1}),P(z_{2})}^{P(z_{1}-z_{2}),P(z_{2})}\big)_{\voamodgen_{1},\voamodgen_{2},\voamodgen_{3}}\colon\
 (\voamodgen_{1}\boxtimes_{P(z_{1}-z_{2})}\voamodgen_{2})\boxtimes_{P(z_{2})}\voamodgen_{3}\to
 \voamodgen_{1}\boxtimes_{P(z_{1})}(\voamodgen_{2}\boxtimes_{P(z_{2})}\voamodgen_{3}),
\end{gather*}
if it exists, is characterized by the property
\begin{gather*}
 \ol{\big(\cA_{P(z_{1}),P(z_{2})}^{P(z_{1}-z_{2}),P(z_{2})}\big)_{\voamodgen_{1},\voamodgen_{2},\voamodgen_{3}}}\colon\
 (\vmodvec_{1}\boxtimes_{P(z_{1}-z_{2})}\vmodvec_{2})\boxtimes_{P(z_{2})}\vmodvec_{3} \mapsto
 \vmodvec_{1}\boxtimes_{P(z_{1})}(\vmodvec_{2}\boxtimes_{P(z_{2})}\vmodvec_{3})
\end{gather*}
for $\vmodvec_{i}\in \voamodgen_{i}$, $i=1,2,3$.

\subsection{Parallel transport}
Let $\voamodgen_{1}$, $\voamodgen_{2}$ be $\voagen$-modules,
and let us take $z_{1},z_{2}\in\bC^{\times}$.
Then we may consider the $P(z_{1})$-tensor product $\voamodgen_{1}\boxtimes_{P(z_{1})}\voamodgen_{2}$ and the $P(z_{2})$-tensor product $\voamodgen_{1}\boxtimes_{P(z_{2})}\voamodgen_{2}$.
Depending on a path $\gamma$ in $\bC^{\times}$ from $z_{1}$ to $z_{2}$,
the parallel transport isomorphism $\cT_{\gamma}\colon \voamodgen_{1}\boxtimes_{P(z_{1})}\voamodgen_{2}\to \voamodgen_{1}\boxtimes_{P(z_{2})}\voamodgen_{2}$ is defined as follows.

As before, we fix a branch of $\log (z_{2})$ so that $\arg (z_{2})\in [0,2\pi)$.
Then we write $l_{\gamma}(z_{1})$ for the logarithm of $z_{1}$ determined by the analytic continuation along $\gamma$ from $\log (z_{2})$.
Let $\voaint (-,x)$ be the corresponding intertwining operator to the $P(z_{2})$-tensor product $\voamodgen_{1}\boxtimes_{P(z_{2})}\voamodgen_{2}$.
Then, the parallel transport $\cT_{\gamma}$ is characterized by the property
\begin{gather*}
 \ol{\cT}_{\gamma}\left(\vmodvec_{1}\boxtimes_{P(z_{1})}\vmodvec_{2}\right)=\voaint(\vmodvec_{1},x)\vmodvec_{2}|_{x={\rm e}^{l_{\gamma}(z_{1})}},\qquad \vmodvec_{1}\in\voamodgen_{1}, \vmodvec_{2}\in\voamodgen_{2}.
\end{gather*}

\subsection{Monoidal structure}
The $P(z)$-tensor product depends on $z\in \bC^{\times}$, so varying $z$, we get a family of tensor products that are related by parallel transport.
Here, we fix a single monoidal structure at $z=1$.

First, we set $\boxtimes=\boxtimes_{P(1)}$ and take $\voagen$ as a unit object.
For a $\voagen$-module $\voamodgen$, the unit isomorphisms $\lambda_{\voamodgen}\colon \voagen\boxtimes\voamodgen\to\voamodgen$ and $\rho_{\voamodgen}\colon\voamodgen\boxtimes \voagen\to \voamodgen$ are characterized by
\begin{gather*}
 \lambda_{\voamodgen}\colon\ \voavac\boxtimes \vmodvec\mapsto \vmodvec,\qquad \ol{\rho_{\voamodgen}}\colon\ \vmodvec\boxtimes\voavac\mapsto {\rm e}^{L^{\voamodgen}_{-1}}\vmodvec,\qquad \vmodvec\in\voamodgen.
\end{gather*}

To define the associativity isomorphism, we take $z_{1}$ and $z_{2}$ on the real axis so that $z_{1}>z_{2}>z_{1}-z_{2}>0$.
We also take several paths in $\bR_{>0}$: $\gamma_{1}$ from $1$ to $z_{2}$, $\gamma_{2}$ from $1$ to $z_{1}-z_{2}$, $\gamma_{3}$ from~$z_{1}$ to $1$, and $\gamma_{4}$ from $z_{2}$ to $1$.
Then, given three $\voagen$-modules $\voamodgen_{i}$, $i=1,2,3$, the isomorphism $\cA_{\voamodgen_{1},\voamodgen_{2},\voamodgen_{3}}\colon (\voamodgen_{1}\boxtimes \voamodgen_{2})\boxtimes \voamodgen_{3}\to \voamodgen_{1}\boxtimes (\voamodgen_{2}\boxtimes \voamodgen_{3})$ is the following compositions of isomorphisms:
\begin{gather*}
 (\voamodgen_{1}\boxtimes \voamodgen_{2})\boxtimes \voamodgen_{3}
 \xrightarrow{\cT_{\gamma_{1}}}
 (\voamodgen_{1}\boxtimes \voamodgen_{2})\boxtimes_{P(z_{2})} \voamodgen_{3}
 \xrightarrow{\cT_{\gamma_{2}}\boxtimes_{P(z_{2})}\id_{\voamodgen_{3}}} \\
 (\voamodgen_{1}\boxtimes_{P(z_{1}-z_{2})} \voamodgen_{2})\boxtimes_{P(z_{2})} \voamodgen_{3}
 \xrightarrow{(\cA_{P(z_{1}),P(z_{2})}^{P(z_{1}-z_{2}),P(z_{2})})_{\voamodgen_{1},\voamodgen_{2},\voamodgen_{3}}}
 \voamodgen_{1}\boxtimes_{P(z_{1})} (\voamodgen_{2}\boxtimes_{P(z_{2})} \voamodgen_{3}) \\
 \xrightarrow{\id_{\voamodgen_{1}}\boxtimes_{P(z_{1})}\cT_{\gamma_{4}}}
 \voamodgen_{1}\boxtimes_{P(z_{1})} (\voamodgen_{2}\boxtimes \voamodgen_{3})
 \xrightarrow{\cT_{\gamma_{3}}}
 \voamodgen_{1}\boxtimes (\voamodgen_{2}\boxtimes \voamodgen_{3}).
\end{gather*}
Then, $(\boxtimes, \cA, \voagen, \lambda,\rho)$ gives a monoidal structure of the category of interest.

\subsection{Braiding and twist}
\label{sect:HL_braiding}
We can define a braiding on the category by means of the parallel transport.
Let $\gamma$ be a path from~$-1$ to $1$ contained in the complex upper half plane except its end points.
Given two $\voagen$-modules $\voamodgen_{1}$ and $\voamodgen_{2}$, the braiding isomorphism $c_{\voamodgen_{1},\voamodgen_{2}}\colon \voamodgen_{1}\boxtimes\voamodgen_{2}\to \voamodgen_{2}\boxtimes \voamodgen_{1}$ is characterized~by
\begin{gather*}
 \ol{c_{\voamodgen_{1},\voamodgen_{2}}}\colon\ \vmodvec_{1}\boxtimes\vmodvec_{2}\mapsto {\rm e}^{L^{\voamodgen_{2}\boxtimes\voamodgen_{1}}_{-1}}\ol{\cT_{\gamma}}(\vmodvec_{2}\boxtimes_{P(-1)}\vmodvec_{1}),\qquad \vmodvec_{1}\in\voamodgen_{1}, \vmodvec_{2}\in \voamodgen_{2}.
\end{gather*}

For each $\voagen$-module $\voamodgen$, the twist $\theta_{\voamodgen}\colon \voamodgen\to\voamodgen$ defined by \smash{$\theta_{\voamodgen}={\rm e}^{2\pi\bbmi L_{0}^{\voamodgen}}$} gives a ribbon structure.

\section{Generic Virasoro VOA and modules}
\label{sect:generic_VirVOA}
In this section, we introduce the generic Virasoro VOA and its modules, and review the results of our previous work~\cite{KoshidaKytola2022} on the first-row modules.

\subsection{Generic Virasoro VOA and the Kac table}
For a fixed central charge $c\in \bC$, the universal Virasoro VOA $\virvoa_{c}$ is given by
\begin{gather*}
 \virvoa_{c}=\UEA (\vir)/\big(\UEA (\vir)(C-c)+\textstyle{\sum_{n\geq -1}}\UEA (\vir)L_{n}\big)
\end{gather*}
together with the vacuum vector $\voavac=[1]$ and the conformal vector $\voaconf=[L_{-2}]$.
The state field correspondence map $\voaY(-,x)$ is uniquely determined by
\begin{gather*}
 \voaY(\voaconf,x)=\sum_{n\in\bZ}L_{n}x^{-n-2}.
\end{gather*}

We parameterize the central charge by another parameter $t$ as
\begin{gather}
\label{eq:vir_cc_in_t}
 c=c(t)=13-6\big(t+t^{-1}\big).
\end{gather}
It is known that, when $t\not\in\bQ$, $\virvoa_{c}$ is a simple VOA (see, e.g.,~\cite{IK-representation_theory_of_the_Virasoro_algebra}).
In this case, we call $\virvoa_{c}$ the generic Virasoro VOA of central charge $c$.

Let us consider modules of $\virvoa_{c}$.
For a conformal weight $h\in\bC$, the Verma module $\virVerma (c,h)$ is given by
\begin{gather*}
 \virVerma (c,h)= \UEA (\vir)/\big(\UEA (\vir)(C-c)+\UEA(\vir)(L_{0}-h)+\textstyle{\sum_{n\geq 1}}\UEA(\vir)L_{n}\big).
\end{gather*}
The Verma module is not only a representation of $\vir$, but also a $\virvoa_{c}$-module.
Under the parametrization \eqref{eq:vir_cc_in_t}, the conformal weights of the Kac table are given by
\begin{gather*}
	h_{r,s} = h_{r,s}(t)= \frac{r^{2}-1}{4}t-\frac{rs-1}{2}+\frac{s^{2}-1}{4}t^{-1},\qquad r,s\in \bZ_{\geq 1}.
\end{gather*}
The Verma module $\virVerma (c(t),h_{k+1,\ell +1}(t))$ with $k,\ell\in\bZ_{\geq 0}$ is reducible, so we write its simple quotient as $\virmod_{(k,\ell)}$.
It is known that $\virmod_{(k,\ell)}$ are self-dual: $\virmod_{(k,\ell)}^{\vee}\simeq \virmod_{(k,\ell)}$, $k,\ell\in\bZ_{\geq 0}$.
We also set $\virmod_{\ell}:= \virmod_{(\ell,0)}$, $\ell\in\bZ_{\geq 0}$ and call them the first-row modules.
We remark that $\virmod_{0}$ is $\virvoa_{c}$ itself because the maximal proper submodule of $\virVerma (c(t),h_{1,1}(t))$ is generated by $[L_{-1}]$.

In the rest of this section, we focus on the first-row modules.
It will then be convenient to write
\begin{gather}
\label{eq:conf_weight_fr}
	\frweight{\ell}:= h_{\ell+1,1} = \frac{\ell(\ell+2)}{4}t-\frac{\ell}{2},\qquad \ell\in\bZ_{\geq 0}
\end{gather}
for the conformal weights of the first-row modules.
We also fix the highest weight vector $\hwvec_{\ell}$ of~$\virmod_{\ell}$ as the image of $1\in\UEA(\vir)$.

\subsection{Fusion rules}
The fusion rules among the first-row modules have been known for a long time~\cite{FrenkelZhu2012}, and we gave an alternative proof for them in our previous work~\cite{KoshidaKytola2022}.
Let us record the result here.

\begin{thm}[\cite{FrenkelZhu2012,KoshidaKytola2022}]
\label{thm:first-row_module_fusion}
For $\ell_{1},\ell_{2},\ell_{3}\in \bZ_{\geq 0}$,
\begin{gather*}
 \dim \intsp{\virmod_{\ell_{1}}}{\virmod_{\ell_{2}}}{\virmod_{\ell_{3}}}=
 \begin{cases}
 1, & \ell_{3}\in \Sel (\ell_{1},\ell_{2}),\\
 0, & \mathrm{otherwise}.
 \end{cases}
\end{gather*}
Recall that $\Sel$ is the selection rule set of the Clebsch--Gordan rule.
\end{thm}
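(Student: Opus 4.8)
The plan is to split the statement into three parts: that the fusion space is at most one-dimensional for every $\ell_3$, that it vanishes unless $\ell_3\in\Sel(\ell_1,\ell_2)$, and that it is nonzero for every $\ell_3\in\Sel(\ell_1,\ell_2)$. For the upper bound I would reduce an intertwining operator to a single scalar. Since $\hwvec_{\ell_1}$ generates $\virmod_{\ell_1}$ and $\hwvec_{\ell_2}$ generates $\virmod_{\ell_2}$ under the $\vir$-action, any $\voaint\in\intsp{\virmod_{\ell_1}}{\virmod_{\ell_2}}{\virmod_{\ell_3}}$ is determined by $\voaint(\hwvec_{\ell_1},x)\hwvec_{\ell_2}$ via the Jacobi identity and the commutators with $L_n$. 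Pairing against the top-level dual vector $\hwvec_{\ell_3}^{\ast}$ produces a single monomial $\langle\hwvec_{\ell_3}^{\ast},\voaint(\hwvec_{\ell_1},x)\hwvec_{\ell_2}\rangle=c\,x^{h_{\ell_3}-h_{\ell_1}-h_{\ell_2}}$ with one scalar $c\in\bC$, since the top levels are one-dimensional and the power of $x$ is fixed by the $L_0$-grading. The $L_{-1}$-derivation property and the $L_n$-recursions then express every matrix coefficient through $c$, so $\voaint\mapsto c$ is injective and $\dim\intsp{\virmod_{\ell_1}}{\virmod_{\ell_2}}{\virmod_{\ell_3}}\le 1$; the same bound also follows from Frenkel--Zhu theory, as the Zhu algebra of $\virvoa_{c}$ is $\bC[x]$ and each top level is one-dimensional.

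The vanishing outside $\Sel(\ell_1,\ell_2)$ is the crux, and I expect it to be the main obstacle. Here I would use the singular vector of $\virVerma(c,h_{\ell_1})$ at level $\ell_1+1$: its image vanishes in the simple quotient $\virmod_{\ell_1}$, which forces the matrix coefficient above to satisfy a BPZ-type ordinary differential equation of order $\ell_1+1$ in $x$. The indicial exponents of this equation at $x=0$ are $h_{\ell_3'}-h_{\ell_1}-h_{\ell_2}$ for $\ell_3'$ running over $\ell_2-\ell_1,\,\ell_2-\ell_1+2,\,\dots,\,\ell_1+\ell_2$. Because $t$ is generic, the weight map $\ell\mapsto h_{\ell}$ is injective, so for a given nonnegative $\ell_3$ the required leading exponent is an indicial root precisely when $\ell_3$ is a nonnegative member of this progression, that is, when $\ell_3\in\Sel(\ell_1,\ell_2)$; otherwise the only solution of the correct form is zero, forcing $c=0$ and $\voaint=0$. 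Matching the indicial roots with the weights $h_{\ell_3'}$ is the computational heart of the argument: for $\ell_1=1$ it reduces to the hypergeometric equation, whose two exponents reproduce $\Sel(1,\ell_2)$, and the general first-row null vector produces the stated progression.

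Finally, for existence I would exhibit, for each $\ell_3\in\Sel(\ell_1,\ell_2)$, a nonzero intertwining operator realizing a leading coefficient $c\neq 0$. The most direct route is a Coulomb-gas / screened vertex operator whose screening integrals select the solution of the BPZ equation with exponent $h_{\ell_3}-h_{\ell_1}-h_{\ell_2}$ and assemble into a genuine intertwining operator; equivalently, one checks that the recursion built on $c\neq 0$ is consistent with the null-vector relations in all three modules. Since the detailed analysis of these intertwining maps was carried out in our previous work~\cite{KoshidaKytola2022}, I would invoke those results to supply the nonzero operator, yielding $\dim=1$ on $\Sel(\ell_1,\ell_2)$ and completing the proof.
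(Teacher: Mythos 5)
The paper does not actually prove this theorem: it records it as a known result, citing \cite{FrenkelZhu2012} for the original computation and \cite{KoshidaKytola2022} for an alternative proof, so there is no in-paper argument to compare against. Your outline follows the standard route (determinacy of an intertwining operator by its leading coefficient, hence $\dim\le 1$; a null-vector constraint for the selection rule; an explicit construction for existence), and the first and third parts are fine in spirit — the existence part legitimately defers to \cite{KoshidaKytola2022}, which is where the hard analysis lives.

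However, the selection-rule step as written contains a genuine error. The singular vector of $\virVerma(c,h_{\ell_{1}})$ at level $\ell_{1}+1$ yields $\ell_{1}+1$ admissible exponents, corresponding to the progression $\ell_{2}-\ell_{1},\,\ell_{2}-\ell_{1}+2,\dots,\ell_{1}+\ell_{2}$, and you claim that its nonnegative members are exactly $\Sel(\ell_{1},\ell_{2})$. This fails whenever $\ell_{1}>\ell_{2}$: for $\ell_{1}=3$, $\ell_{2}=1$ the nonnegative members are $\{0,2,4\}$ while $\Sel(3,1)=\{2,4\}$, and for $\ell_{1}=2$, $\ell_{2}=0$ they are $\{0,2\}$ while $\Sel(2,0)=\{2\}$. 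In both cases $\ell_{3}=0$ must be excluded (indeed $\intsp{\virmod_{\ell_{1}}}{\virmod_{\ell_{2}}}{\virvoa_{c}}=0$ since $\virmod_{\ell_{1}}\not\simeq\virmod_{\ell_{2}}$ there), but the single null vector of $\virmod_{\ell_{1}}$ does not see this: its constraint only enforces the lower cutoff $\ell_{3}\geq\ell_{2}-\ell_{1}$, not $\ell_{3}\geq|\ell_{1}-\ell_{2}|$. The fix is to impose the null-vector constraint coming from $\virmod_{\ell_{2}}$ as well (giving the progression starting at $\ell_{1}-\ell_{2}$) and intersect the two sets of admissible $\ell_{3}$; equivalently, one may use the null vector of whichever of the two source modules has the smaller index. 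Without this second constraint the vanishing claim outside $\Sel(\ell_{1},\ell_{2})$ is not established. A minor additional point: when discarding the exponents $h_{1,k}-h_{\ell_{1}}-h_{\ell_{2}}$ with $k\leq 0$ you need genericity in the form $h_{1,k}\neq h_{1,k'}$ for $k\neq k'$ integers (which holds since $t\notin\bQ$), not merely injectivity of $\ell\mapsto h_{\ell}$ on $\bZ_{\geq 0}$.
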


Suppose that $\ell_{1},\ell_{2},\ell_{3}\in\bZ_{\geq 0}$ satisfy the selection rule $\ell_{3}\in\Sel (\ell_{1},\ell_{2})$.
Then there exists a~unique intertwining operator of type \smash{$\inttyp{\virmod_{\ell_{1}}}{\virmod_{\ell_{2}}}{\virmod_{\ell_{3}}}$} up to constant.
We fix the normalization of the intertwining operator
\[
\voaint_{\ell_{1} \ell_{2}}^{\ell_{3}}(-,x)\in \intsp{\virmod_{\ell_{1}}}{\virmod_{\ell_{2}}}{\virmod_{\ell_{3}}}
\]
 as
\begin{gather*}
 \voaint_{\ell_{1} \ell_{2}}^{\ell_{3}}(\hwvec_{\ell_{1}},x)\hwvec_{\ell_{2}}\in \fusNorm_{\ell_{1} \ell_{2}}^{\ell_{3}}\hwvec_{\ell_{3}}x^{\frweight{\ell_{3}}-\frweight{\ell_{1}}-\frweight{\ell_{2}}}+\virmod_{\ell_{3}}[[x]]x^{\frweight{\ell_{3}}-\frweight{\ell_{1}}-\frweight{\ell_{2}}+1},
\end{gather*}
where the constant $\fusNorm_{\ell_{1} \ell_{2}}^{\ell_{3}}$ is given by the formula
\begin{gather*}
 \fusNorm_{\ell_{1} \ell_{2}}^{\ell_{3}}=\frac{1}{s!}\prod_{j=1}^{s}\frac{\Gamma (1+tj) \Gamma (1-t(\ell_{1}+1-j)) \Gamma (1-t(\ell_{2}+1-j))}{\Gamma (1+t) \Gamma (2-t(2-p+\ell_{1}+\ell_{2}-s))},
\end{gather*}
where $s=(\ell_{1}+\ell_{2}-\ell_{3})/2$.

\begin{rem}
\label{rem:intertw_Virasoro_special_cases}
Let us remark on a few properties of the intertwining operator \smash{$\voaint_{\ell_{1} \ell_{2}}^{\ell_{3}}(-,x)$}~when $\ell_{1}=0$ or $\ell_{2}=0$.
When $\ell_{1}=0$, as we pointed out in Remark~\ref{rem:intertw_special_cases}, \smash{$\intsp{\virvoa_{c}}{\virmod_{\ell}}{\virmod_{\ell}}$} contains~$\voaY_{\virmod_{\ell}}(-,x)$, which now must span the space of intertwining operators.
Furthermore, we can identify $\voaint_{0 \ell}^{\ell}(-,x)\allowbreak=\voaY_{\virmod_{\ell}}(-,x)$ by observing
$
 \voaint_{0 \ell}^{\ell}(\voavac,x) =\Id_{\virmod_{\ell}}=\voaY_{\virmod_{\ell}}(\voavac,x)$.
For the case where $\ell_{2}=0$, the normalization of $\voaint_{\ell 0}^{\ell}(-,x)$ gives us
$
 \voaint_{\ell 0}^{\ell}(\hwvec_{\ell},x)\voavac\in\hwvec_{\ell}+\virmod_{\ell}[[x]]x$.
By the Jacobi identity and the $L_{-1}$-derivation property, we can deduce that
\smash{$
 \voaint_{\ell 0}^{\ell}(\hwvec_{\ell},x)\voavac={\rm e}^{L_{-1}^{\virmod_{\ell}}}\hwvec_{\ell}$}.
\end{rem}

\subsection{Associativity of intertwining operators}
The general idea of associativity comes down to comparing the composition and iteration of intertwining operators,
but it is not even clear in general if the composition and iteration are possible.
The following theorem is a consequence of the general analysis by Huang~\cite{Huang2005},
or was proven in our previous work~\cite{KoshidaKytola2022}.

\begin{thm}[\cite{Huang2005,KoshidaKytola2022}]
Let $\ell_{1},\ell_{2},\ell_{3},\ell_{4}\in\bZ_{\geq 0}$.
\begin{enumerate}\itemsep=0pt
 \item[$(1)$] For any $n\in \Chan{\ell_{2}}{\ell_{3}}{\ell_{1}}{\ell_{4}}$ and $\vmodvec_{1}\in \virmod_{\ell_{1}}$, $\vmodvec_{2}\in\virmod_{\ell_{2}}$, $\vmodvec_{3}\in\virmod_{\ell_{3}}$, the formal series
 \begin{gather*}
 \voaint_{\ell_{1} n}^{\ell_{4}}(\vmodvec_{1},x_{1})\voaint_{\ell_{2} \ell_{3}}^{n}(\vmodvec_{2},x_{2})\vmodvec_{3}
 \end{gather*}
 in $x_{1}$ and $x_{2}$ converges in $\ol{\virmod}_{\ell_{4}}$ at \smash{$x_{1}={\rm e}^{\log (z_{1})}$}, \smash{$x_{2}={\rm e}^{\log (z_{2})}$} such that $|z_{1}|>|z_{2}|>0$.
 \item[$(2$] For any $m\in \Chan{\ell_{1}}{\ell_{2}}{\ell_{3}}{\ell_{4}}$ and $\vmodvec_{1}\in \virmod_{\ell_{1}}$, $\vmodvec_{2}\in\virmod_{\ell_{2}}$, $\vmodvec_{3}\in\virmod_{\ell_{3}}$, the formal series
 \begin{gather*}
 \voaint_{m \ell_{3}}^{\ell_{4}}\left(\voaint_{\ell_{1} \ell_{2}}^{m}(\vmodvec_{1},x_{0})\vmodvec_{2},x_{2}\right)\vmodvec_{3}
 \end{gather*}
 in $x_{0}$ and $x_{2}$ converges in $\ol{\virmod}_{\ell_{4}}$ at $x_{0}={\rm e}^{\log (z_{0})}$, $x_{2}={\rm e}^{\log (z_{2})}$ such that $|z_{2}|>|z_{0}|>0$.
\end{enumerate}
\end{thm}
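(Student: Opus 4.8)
The plan is to pass to scalar matrix coefficients and then recognize both series as channel expansions of a single conformal block governed by a Fuchsian differential equation arising from the degeneracy of the first-row modules. A formal series with values in $\ol{\virmod}_{\ell_{4}}=\prod_{n}(\virmod_{\ell_{4}})_{n}$ converges at a prescribed point if and only if, for every $\vmodvec'$ in the graded dual $\virmod_{\ell_{4}}'$, the scalar series obtained by pairing with $\vmodvec'$ converges there. So I would fix such a $\vmodvec'$ and analyze the resulting numerical series in the two formal variables.

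Next I would observe that parts (1) and (2) are two expansions of \emph{the same} object: both matrix coefficients arise from inserting the fields associated with $\virmod_{\ell_{1}}$ and $\virmod_{\ell_{2}}$ at points $z_{1}$ and $z_{2}$, with $\vmodvec_{3}\in\virmod_{\ell_{3}}$ placed at $0$ and $\vmodvec'$ at $\infty$. Part (1) is the regime $|z_{1}|>|z_{2}|$ (the $\virmod_{\ell_{2}}\times\virmod_{\ell_{3}}$ channel), and part (2), with $z_{0}=z_{1}-z_{2}$, is the regime $|z_{2}|>|z_{0}|$ (the $\virmod_{\ell_{1}}\times\virmod_{\ell_{2}}$ channel). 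Using the Jacobi identity to express $\voaint(L_{-k}-,x)$ through $\voaint(-,x)$, its $x$-derivatives, and the modes $L_{m}^{\virmod}$, I would reduce to the case where all four arguments are the highest-weight vectors $\hwvec_{\ell_{i}}$; the general case then follows by applying finitely many differential operators and finite mode actions, which preserve convergence on the relevant domain. For highest-weight, hence primary, arguments, global conformal covariance generated by $L_{-1},L_{0},L_{1}$ reduces the matrix coefficient to an explicit product of powers of $z_{1}$ and $z_{2}$ times a function of the single cross-ratio $\eta=z_{2}/z_{1}$.

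The crux is to produce the differential equation. Since $\virmod_{\ell_{1}}$ is the simple quotient of the reducible Verma module $\virVerma(c,h_{\ell_{1}})$, its maximal proper submodule is generated by a singular vector at level $\ell_{1}+1$, i.e.\ a fixed $\chi\in\UEA(\vir)\hwvec_{\ell_{1}}$ that vanishes in $\virmod_{\ell_{1}}$. The identity $\voaint(\chi,x)=0$, rewritten through the $L_{-1}$-derivation property and the Jacobi identity, becomes a linear ordinary differential equation of order $\ell_{1}+1$ for the cross-ratio function, of Riemann type with regular singular points exactly at $\eta=0,1,\infty$; its indicial exponents are read off from the conformal weights $h_{\ell}(t)$ of \eqref{eq:conf_weight_fr}. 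Because $t\notin\bQ$, the exponents at each singular point are non-resonant, so the Frobenius solutions carry no logarithms and are genuinely convergent power series. The expansion around $\eta=0$, whose leading exponent is fixed by the normalization constants $\fusNorm_{\ell_{1}\,n}^{\ell_{4}}\fusNorm_{\ell_{2}\,\ell_{3}}^{n}$, then converges for $|\eta|<1$ and yields part (1) on $|z_{1}|>|z_{2}|$, while the expansion around $\eta=1$ converges near that point and yields part (2) on $|z_{2}|>|z_{0}|$; transporting these scalar statements back gives convergence in $\ol{\virmod}_{\ell_{4}}$.

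The step I expect to be the main obstacle is this third one: extracting the Fuchsian equation from the singular vector in sufficiently explicit form to confirm that \emph{every} singularity of its coefficients is genuinely regular and to compute the indicial exponents, so that the classical existence theory for equations with regular singular points applies on precisely the stated annular domains. A less explicit alternative is to verify the grading and cofiniteness hypotheses of Huang's general convergence theorem~\cite{Huang2005} for the first-row modules and invoke it directly, trading the hands-on ODE analysis for the verification of abstract hypotheses.
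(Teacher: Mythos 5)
The paper itself offers no proof of this theorem: it is imported wholesale from \cite{Huang2005,KoshidaKytola2022} (``a consequence of the general analysis by Huang, or was proven in our previous work''). Your outline is essentially a reconstruction of the argument of those sources, and it is sound: reduction to scalar matrix coefficients against the graded dual, reduction to primary insertions via the Ward identities, the order-$(\ell_{1}+1)$ Fuchsian ODE from the level-$(\ell_{1}+1)$ singular vector of $\virVerma(c,h_{\ell_{1}})$ (which vanishes in the simple quotient $\virmod_{\ell_{1}}$, so the intertwining operator kills it), and Frobenius theory with non-resonance because $h_{n}-h_{n'}=\tfrac{(n-n')(n+n'+2)}{4}t-\tfrac{n-n'}{2}\notin\bZ\setminus\{0\}$ when $t\notin\bQ$. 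Your fallback via Huang's general theorem is also legitimate (the first-row modules are highest-weight, hence $C_{1}$-cofinite and quasi-finite-dimensional). The one step you should tighten is the domain for part (2): the iterated series is, after extracting the power prefactor, a power series in $w=z_{0}/z_{2}=(1-\eta)/\eta$, not in $\eta-1$. The three singular points sit at $w=0,-1,\infty$, so the Frobenius series at $w=0$ has radius of convergence exactly $1$, which is precisely the stated region $|z_{2}|>|z_{0}|>0$. By contrast, the disk $|\eta-1|<1$ you invoke corresponds to $|z_{0}|<|z_{1}|$, which neither contains nor is contained in $|z_{0}|<|z_{2}|$; so as literally written your argument proves convergence on the wrong region for part (2). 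The fix is forced by the structure of the series itself and costs nothing, but it needs to be said.
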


From Theorem~\ref{thm:first-row_module_fusion}, we know that the fusion rules among first-row modules match those of finite-dimensional irreducible representations of $\Uq$.
The Clebsch--Gordan rules are, however, independent of the parameter $q$.
The following theorem manifests the matching of the parameters for the generic Virasoro VOA and the quantum group $\Uq$.

\begin{thm}[\cite{KoshidaKytola2022}]
\label{thm:associativity_int_op_frmods}
Let $\ell_{1},\ell_{2},\ell_{3},\ell_{4}\in\bZ_{\geq 0}$ and take $z_{1}$, $z_{2}$ on the real axis so that $z_{1}>z_{2}>z_{1}-z_{2}>0$.
For any $n\in \Chan{\ell_{2}}{\ell_{3}}{\ell_{1}}{\ell_{4}}$ and $\vmodvec_{1}\in\virmod_{\ell_{1}}$, $\vmodvec_{2}\in\virmod_{\ell_{2}}$, $\vmodvec_{3}\in \virmod_{\ell_{3}}$,
we get
\begin{gather*}
 \voaint_{\ell_{1} n}^{\ell_{4}}(\vmodvec_{1},z_{1})\voaint_{\ell_{2} \ell_{3}}^{n}(\vmodvec_{2},z_{2})\vmodvec_{3} \\
 \qquad = \sum_{m\in \Chan{\ell_{1}}{\ell_{2}}{\ell_{3}}{\ell_{4}}}\sixjsymb{\ell_{1}}{\ell_{2}}{\ell_{3}}{\ell_{4}}{m}{n}\voaint_{m \ell_{3}}^{\ell_{4}}\left(\voaint_{\ell_{1} \ell_{2}}^{m}(\vmodvec_{1},z_{1}-z_{2})\vmodvec_{2},z_{2}\right)\vmodvec_{3}
\end{gather*}
in $\ol{\virmod}_{\ell_{4}}$,
where the $6j$-symbols in the right hand side are those of $\Uq$ equipped with $\QGcoprod^{\op}$ defined in \eqref{eq:QG_6j_definition} at
$q={\rm e}^{\pi \bbmi t}$, $ t\not\in\bQ$.
\end{thm}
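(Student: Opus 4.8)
The plan is to regard both sides of the asserted identity as multivalued analytic functions of $(z_1,z_2)$ valued in $\ol{\virmod}_{\ell_4}$---that is, as genus-zero Virasoro conformal blocks with four degenerate insertions---and to exhibit the composition family and the iteration family as two expansions of one and the same finite-dimensional space of such blocks, related by a connection matrix that must be identified with the $6j$-symbol. First I would fix the external labels $\ell_1,\ell_2,\ell_3,\ell_4$ and reduce to the generating data: by Theorem~\ref{thm:first-row_module_fusion} each intertwining operator is unique up to scale, so it suffices to establish the identity of the scalar conformal blocks obtained by evaluating on $\hwvec_{\ell_1}\otimes\hwvec_{\ell_2}\otimes\hwvec_{\ell_3}$, the full operator identity being then forced by the Virasoro Ward identities (equivalently, by the $L_{-1}$-derivation property together with the Jacobi identity). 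The fusion rule also shows that the index sets $\Chan{\ell_2}{\ell_3}{\ell_1}{\ell_4}$ and $\Chan{\ell_1}{\ell_2}{\ell_3}{\ell_4}$ have equal cardinality, both counting the multiplicity of $\QGirrep{\ell_4}$ in the triple tensor product, so the two families of blocks have matching sizes.

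Next I would exploit the degeneracy of the first-row modules. Each $\virmod_{\ell}$ is a simple quotient of the Verma module $\virVerma(c(t),h_\ell(t))$ by a singular vector, and substituting the corresponding null-vector relation into a conformal block produces a BPZ-type ordinary differential equation in the insertion points. The joint system of these equations cuts out a finite-dimensional solution space whose dimension equals the number of fusion channels. The compositions $\voaint_{\ell_1\,n}^{\ell_4}(\hwvec_{\ell_1},z_1)\voaint_{\ell_2\,\ell_3}^{n}(\hwvec_{\ell_2},z_2)\hwvec_{\ell_3}$, indexed by $n\in\Chan{\ell_2}{\ell_3}{\ell_1}{\ell_4}$, furnish a basis of solutions adapted to the regime $z_2\to 0$, whereas the iterations $\voaint_{m\,\ell_3}^{\ell_4}(\voaint_{\ell_1\,\ell_2}^{m}(\hwvec_{\ell_1},z_1-z_2)\hwvec_{\ell_2},z_2)\hwvec_{\ell_3}$, indexed by $m\in\Chan{\ell_1}{\ell_2}{\ell_3}{\ell_4}$, furnish a basis adapted to $z_1-z_2\to 0$. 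By the convergence statements and the associativity isomorphism recalled in Section~\ref{sect:module_cat_VOA}, on the common region $z_1>z_2>z_1-z_2>0$ every composition is a linear combination of iterations, and the coefficient matrix is exactly the Virasoro crossing matrix; the content of the theorem is that this matrix coincides with the inverse associativity matrix of $\catQG^{\QGcoprod^{\op}}$, namely the $6j$-symbols of~\eqref{eq:QG_6j_definition}.

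The heart of the argument is then the explicit evaluation of this connection matrix and its identification with $\sixjsymb{\ell_1}{\ell_2}{\ell_3}{\ell_4}{m}{n}$. I would obtain closed forms for the blocks through a Coulomb-gas (screened free-field) representation, turning them into Dotsenko--Fateev integrals whose integrands carry the parameter $t$ through their exponents; the leading asymptotics in the two channels are then fixed by the prescribed normalizations $\fusNorm_{\ell_1\,\ell_2}^{\ell_3}$, and the crossing coefficients emerge as ratios of the associated Selberg-type integrals. The monodromy of such integrals is classically governed by a quantum group with deformation parameter $q=e^{\pi\bbmi t}$, and a direct comparison of the resulting $q$-Gamma products with the Clebsch--Gordan coefficients~\eqref{eq:CG_coeffs}---hence with the $6j$-symbols assembled from them---completes the identification.

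I expect this final matching to be the principal obstacle. One must keep careful track of the branch choices (fixed throughout by $\arg\in[0,2\pi)$), of the orientation conventions that force the \emph{opposite} coproduct $\QGcoprod^{\op}$ rather than $\QGcoprod$, and of all the normalization constants, since only once every convention is aligned do the Virasoro crossing coefficients reproduce the quantum-group $6j$-symbols exactly, rather than up to channel-dependent rescalings that would otherwise obscure the correspondence.
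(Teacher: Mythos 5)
This paper does not actually prove Theorem~\ref{thm:associativity_int_op_frmods}: it is imported verbatim from \cite{KoshidaKytola2022}, so there is no in-text proof to compare against. Judged on its own terms, your outline correctly identifies the skeleton one expects: uniqueness of the intertwining operators (Theorem~\ref{thm:first-row_module_fusion}) reduces everything to the blocks on highest-weight vectors; the singular vectors of the first-row modules give BPZ-type differential systems whose solution space has dimension $|\Chan{\ell_{2}}{\ell_{3}}{\ell_{1}}{\ell_{4}}|=|\Chan{\ell_{1}}{\ell_{2}}{\ell_{3}}{\ell_{4}}|$; compositions and iterations furnish bases adapted to the two degeneration regimes; and the connection matrix between them is to be identified with the $6j$-symbols of $\Uq$ at $q=e^{\pi\bbmi t}$ for the opposite coproduct. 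This is consistent with the broad strategy of the cited work and of its predecessor by Kyt\"ol\"a--Peltola.

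The gap is at the step you yourself call the principal obstacle, and it is a genuine one: you propose to obtain the crossing coefficients a posteriori by evaluating monodromies and asymptotics of multi-dimensional Dotsenko--Fateev integrals and comparing ratios of Selberg-type integrals with the Clebsch--Gordan data \eqref{eq:CG_coeffs}. For four arbitrary first-row weights this is a connection problem for integrals with arbitrarily many screening contours, and no closed-form solution of it is supplied or cited; asserting that the monodromy is ``classically governed'' by the quantum group does not produce the specific matrix $\sixjsymb{\ell_{1}}{\ell_{2}}{\ell_{3}}{\ell_{4}}{m}{n}$ with the normalizations $\fusNorm_{\ell_{1}\,\ell_{2}}^{\ell_{3}}$ fixed here. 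The cited work proceeds in the opposite direction precisely to avoid this: it builds the conformal blocks as images of vectors of tensor products of $\Uq$-modules under a spin-chain/Coulomb-gas correspondence, shows once and for all that collapsing two neighbouring insertion points corresponds to applying the quantum-group projection onto a subrepresentation, and then reads off the $6j$-symbols structurally as the change of basis in $\Hom_{\Uq}(\QGirrep{\ell_{4}},\,\cdot\,)$, with no explicit evaluation of crossing integrals for general weights. A smaller point: the reduction from highest-weight vectors to arbitrary $\vmodvec_{1},\vmodvec_{2},\vmodvec_{3}$ should be justified by the one-dimensionality, channel by channel, of the relevant spaces of products of intertwining maps rather than by an appeal to ``Ward identities'' alone.
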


\section{Category of the first-row modules}
\label{sect:first-row_module_category}
In this section, we introduce the category $\catVir^{+}(t)$ of the first-row modules of the generic Virasoro VOA $\virvoa_{c}$,
and establish the ribbon tensor equivalence $\catVir^{+}(t)\to \catQG (q)$.

\subsection{Dual and opposite categories}
We first make tiny preliminaries on dual and opposite categories.
Let $\catgen$ be a category.
The dual category $\catgen^{\vee}$ is the category with the same objects as $\catgen$ and the opposite morphisms.
If $\catgen$ is equipped with a monoidal structure, then $\catgen^{\vee}$ naturally becomes a monoidal category.

Next, suppose that $\catgen$ is a monoidal category with the monoidal bifunctor denoted by $\otimes$.
The opposite category $\catgen^{\op}$ is the monoidal category with the same underlying category as $\catgen$ and the opposite monoidal structure $\otimes^{\op}$
$
 X\otimes^{\op}Y:=Y\otimes X$, $ X,Y\in\catgen$.

In the case where $\catgen$ is rigid, the dual and opposite categories are equivalent under a functor such that $X\mapsto X^{*}$ \cite[Chapter 2]{etingof2015tensor}.

\begin{rem}
Sometimes the dual category is called opposite, and the opposite category above is called reversed.
However, we follow the terminology in~\cite{etingof2015tensor}.
\end{rem}

When we apply the above construction to $\catgen=\catQG^{\op}(q)$, which we have already noticed is rigid, we can say that $(\catQG^{\op}(q))^{\vee}$ and $(\catQG^{\op}(q))^{\op}$ are equivalent.
Let us also see that the category~$(\catQG^{\op}(q))^{\op}$ is equivalent to $\catQG (q)$ as a tensor category.
Recall that $(\catQG^{\op}(q))^{\op}$ and $\catQG(q)$ have the same underlying abelian category $\catQG$.
We take the natural isomorphism
\begin{gather*}
 \Pi_{\QGrepU,\QGrepV}:=P_{\QGrepV,\QGrepU}\colon\ \QGrepU\otimes_{\QGcoprod^{\op}}^{\op}\QGrepV= \QGrepV\otimes_{\QGcoprod^{\op}}\QGrepU\to \QGrepU\otimes_{\QGcoprod}\QGrepV
\end{gather*}
to be the permutation for each $\QGrepU,\QGrepV\in \catQG$.
Indeed, $\Pi_{\QGrepU,\QGrepV}$ lives in $\Hom_{\catQG}(\QGrepV\otimes_{\QGcoprod^{\op}}\QGrepU,\QGrepU\otimes_{\QGcoprod}\QGrepV)$ as is checked as
\begin{align*}
 \QGcoprod (a)P_{\QGrepV,\QGrepU}(v\otimes u)&=\sum_{(a)}a_{(1)}u\otimes a_{(2)}v =P_{\QGrepV,\QGrepU}\Bigl(\sum_{(a)}a_{(2)}v\otimes a_{(1)}u\Bigr) \\
 &=P_{\QGrepV,\QGrepU}\left(\QGcoprod^{\op}(a)(v\otimes u)\right),\qquad v\in \QGrepV,\quad u\in \QGrepU,\quad a\in \Uq.
\end{align*}
Here, we wrote \smash{$\QGcoprod (a)=\sum_{(a)}a_{(1)}\otimes a_{(2)}$} for $a\in \Uq$.
Therefore, the identity functor \smash{$\Id_{\catQG}$} together with the natural isomorphisms $(\Pi_{\QGrepU,\QGrepV})_{\QGrepU,\QGrepV\in \catQG}$ defines an equivalence between $(\catQG^{\op}(q))^{\op}$ and~$\catQG (q)$ as tensor categories.

In conclusion, we may identify $(\catQG^{\op}(q))^{\vee}$, $(\catQG^{\op}(q))^{\op}$, and $\catQG(q)$ altogether as tensor categories.

\subsection[The first-row category catVir+]{The first-row category $\boldsymbol{\catVir^{+}}$}
We define the category $\catVir^{+}$ as the full subcategory of the module category of the generic Virasoro VOA $\virvoa_{c}$
generated by the first row modules $\virmod_{\ell}$, $\ell\in\bZ_{\geq 0}$ as an additive category.
Therefore, any object of $\catVir^{+}$ is isomorphic to a finite direct sum of first row modules,
and the morphism spaces are determined by
\begin{gather*}
 \Hom_{\catVir^{+}}(\virmod_{\ell_{1}},\virmod_{\ell_{2}})=
 \begin{cases}
 \bC \id_{\virmod_{\ell_{1}}},& \ell_{1}=\ell_{2},\\
 0, &\mathrm{otherwise}
 \end{cases}
\end{gather*}
for $\ell_{1},\ell_{2}\in\bZ_{\geq 0}$.
Note that, at this point, the category $\catVir^{+}$ is independent of the parameter $t$.

It is clear that $\catVir^{+}$ is equivalent to $\catQG$ as an abelian category.
Nevertheless, we would like to make equivalence functors explicit for later use.
For each object $\virmodgen\in\catVir^{+}$, we fix an isomorphism~${f_{\virmodgen}\colon \virmodgen\to\bigoplus_{\ell=0}^{\infty}\virmod_{\ell}^{\oplus m_{\ell}}}$.
Then, we define a functor $F\colon \catVir^{+}\to\catQG$ as follows.
At the object level, if $\virmodgen\in \catVir^{+}$ is isomorphic to $\bigoplus_{\ell=0}^{\infty}\virmod_{\ell}^{\oplus m_{\ell}}$, we send
\begin{gather*}
 F\colon\ \virmodgen\mapsto \bigoplus_{\ell=0}^{\infty}\QGirrep{\ell}^{\oplus m_{\ell}}.
\end{gather*}
At the morphism level, we first require that
\begin{gather*}
 F\colon\ \Hom_{\catVir^{+}}(\virmod_{\ell},\virmod_{\ell})\to \Hom_{\catQG}(\QGirrep{\ell},\QGirrep{\ell});\qquad \id_{\virmod_{\ell}}\mapsto \id_{\QGirrep{\ell}}
\end{gather*}
for each $\ell\in\bZ_{\geq 0}$.
For a general $h\colon \virmodgen_{1}\to\virmodgen_{2}$,
there is a unique way to send \smash{$f_{\virmodgen_{2}}\circ h\circ f_{\virmodgen_{1}}^{-1}$} so that $F$ induces linear maps on morphism spaces.
Then, we can simply define \smash{$F(h)=F\big(f_{\virmodgen_{2}}\circ h\circ f_{\virmodgen_{1}}^{-1}\big)$}.

Similarly, on the side of $\catQG$, we fix an isomorphism \smash{$g_{\QGrepU}\colon \QGrepU\to \bigoplus_{\ell=0}^{\infty}\QGirrep{\ell}^{\oplus n_{\ell}}$} for each \smash{$\QGrepU\in\catQG$}.
Then, we can define a functor $G\colon \catQG\to\catVir^{+}$ in the exactly analogous way as defining $F$.

Now, let us observe that the composition \smash{$G\circ F\colon \catVir^{+}\to\catVir^{+}$} is isomorphic to \smash{$\id_{\catVir^{+}}$}.
Indeed, the family \smash{$(f_{\virmodgen})_{\virmodgen\in\catVir^{+}}$} of the fixed isomorphisms gives a natural isomorphism \smash{$\id_{\catVir^{+}}\Rightarrow G\circ F $};
for~any $\virmodgen_{1},\virmodgen_{2}\!\in\! \catVir^{+}$ and $h\colon \virmodgen_{1}\to\virmodgen_{2}$,
the commutativity of the diagram
\begin{gather*}
 \xymatrix{
 \virmodgen_{1} \ar[r]^{f_{\virmodgen_{1}}} \ar[d]_{h} & G\circ F(\virmodgen_{1}) \ar[d]^{G\circ F(h)} \\
 \virmodgen_{2} \ar[r]_{f_{\virmodgen_{2}}} & G\circ F(\virmodgen_{2})
 }
\end{gather*}
follows from the definitions of $F$ and $G$.
Similarly, $F\circ G\colon \catQG\to \catQG$ is shown to be isomorphic to $\id_{\catQG}$.

In the rest of this section, we will see that $\catVir^{+}$ can be equipped with the structure of a~ribbon tensor category
following the general framework sketched in Section~\ref{sect:module_cat_VOA},
and will write the resulting ribbon tensor category as $\catVir^{+}(t)$.
At each step, we will compare the corresponding structure of~$\catVir^{+}(t)$ with that of $\catQG (q)$,
and thereby prove Theorem~\ref{thm:first-row-cat_QG}.

\subsection{Tensor structure}
First, we show that the category $\catVir^{+}$ is closed under the $P(z)$-tensor product for any $z\in \bC^{\times}$.
The following formula \eqref{eq:Pz-tensor_frmods} has been recorded in \cite[Theorem 5.2.2]{CJORY-tensor_categories_arising_from_Virasoro_algebra}, but we give a proof of it to keep the text elementary.

\begin{thm}
For any $z\in \bC^{\times}$, the category $\catVir^{+}$ is closed under the $P(z)$-tensor product.
Furthermore, for $\ell_{1},\ell_{2}\in\bZ_{\geq 0}$, the $P(z)$-tensor product $\virmod_{\ell_{1}}\boxtimes_{P(z)}\virmod_{\ell_{2}}$ is given by
\begin{gather}
\label{eq:Pz-tensor_frmods}
 \virmod_{\ell_{1}}\boxtimes_{P(z)}\virmod_{\ell_{2}}=\bigoplus_{\ell\in\Sel (\ell_{1},\ell_{2})}\virmod_{\ell}
\end{gather}
together with the $P(z)$-intertwining map
\begin{gather}
\label{eq:Pz-intmap_frmods}
 \voaint_{\virmod_{\ell_{1}}\boxtimes_{P(z)}\virmod_{\ell_{2}}}(-,z)=\sum_{\ell\in\Sel (\ell_{1},\ell_{2})}\voaint_{\ell_{1} \ell_{2}}^{\ell}(-,z)-.
\end{gather}
\end{thm}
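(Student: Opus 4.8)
The plan is to verify directly that the pair displayed in \eqref{eq:Pz-tensor_frmods}--\eqref{eq:Pz-intmap_frmods} satisfies the universal property defining the $P(z)$-tensor product; this establishes both the closure of $\catVir^{+}$ and the explicit formula at once. Write $T:=\bigoplus_{\ell\in\Sel(\ell_{1},\ell_{2})}\virmod_{\ell}$ and $\voaint_{T}:=\sum_{\ell\in\Sel(\ell_{1},\ell_{2})}\voaint_{\ell_{1}\,\ell_{2}}^{\ell}$, and let $F_{z}$ be the intertwining map obtained by evaluating $\voaint_{T}$ at $z$. Since each $\voaint_{\ell_{1}\,\ell_{2}}^{\ell}$ is an intertwining operator of type $\inttyp{\virmod_{\ell_{1}}}{\virmod_{\ell_{2}}}{\virmod_{\ell}}$, their sum is one of type $\inttyp{\virmod_{\ell_{1}}}{\virmod_{\ell_{2}}}{T}$, so $(T,F_{z})$ is indeed a $P(z)$-product; only universality remains.

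First I would recast universality as follows: $(T,F_{z})$ is the $P(z)$-tensor product if and only if, for every $\voagen$-module $\voamodgen$, the post-composition map
\begin{equation*}
    \Phi_{\voamodgen}\colon \Hom(T,\voamodgen)\to \intsp{\virmod_{\ell_{1}}}{\virmod_{\ell_{2}}}{\voamodgen};\quad \eta\mapsto \ol{\eta}\circ \voaint_{T}
\end{equation*}
is a linear isomorphism. Because $T$ is a finite direct sum of first-row modules, $\Hom(T,\voamodgen)=\bigoplus_{\ell\in\Sel(\ell_{1},\ell_{2})}\Hom(\virmod_{\ell},\voamodgen)$, and $\Phi_{\voamodgen}$ sends a tuple $(\eta_{\ell})_{\ell}$ to $\sum_{\ell}\ol{\eta_{\ell}}\circ\voaint_{\ell_{1}\,\ell_{2}}^{\ell}$.

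For injectivity I would argue on leading coefficients. By the chosen normalization, $\voaint_{\ell_{1}\,\ell_{2}}^{\ell}(\hwvec_{\ell_{1}},z)\hwvec_{\ell_{2}}$ begins with $\fusNorm_{\ell_{1}\,\ell_{2}}^{\ell}\,z^{h_{\ell}-h_{\ell_{1}}-h_{\ell_{2}}}\hwvec_{\ell}$, where $\fusNorm_{\ell_{1}\,\ell_{2}}^{\ell}\neq 0$ for generic $t$. Since $t\notin\bQ$, the conformal weights $h_{\ell}$ are pairwise distinct with non-integral differences, so the series attached to distinct $\ell$ occupy distinct cosets of $\bC/\bZ$ in the exponent and cannot interfere. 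Hence $\Phi_{\voamodgen}((\eta_{\ell})_{\ell})=0$ forces $\ol{\eta_{\ell}}(\hwvec_{\ell})=\eta_{\ell}(\hwvec_{\ell})=0$ for each $\ell$; as $\virmod_{\ell}$ is generated by $\hwvec_{\ell}$, every $\eta_{\ell}$ vanishes, so each $\Phi_{\voamodgen}$ is injective.

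The hard part will be surjectivity of $\Phi_{\voamodgen}$, i.e.\ showing that every intertwining operator out of $\virmod_{\ell_{1}},\virmod_{\ell_{2}}$ factors through $\voaint_{T}$. Using semisimplicity of the ambient module category at generic central charge, I would reduce to $\voamodgen$ a finite direct sum of simple modules and compare dimensions: decomposing $\voamodgen$ into simple constituents and invoking Theorem~\ref{thm:first-row_module_fusion}, the only simple targets admitting a nonzero intertwining operator of type $\inttyp{\virmod_{\ell_{1}}}{\virmod_{\ell_{2}}}{\,\cdot\,}$ are the first-row modules $\virmod_{\ell}$ with $\ell\in\Sel(\ell_{1},\ell_{2})$, each contributing a one-dimensional space, whence $\dim\intsp{\virmod_{\ell_{1}}}{\virmod_{\ell_{2}}}{\voamodgen}=\dim\Hom(T,\voamodgen)$ and the injective $\Phi_{\voamodgen}$ becomes bijective. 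The genuinely delicate input is precisely the vanishing of intertwining operators into simple modules \emph{outside} the first row: this is what guarantees that the $P(z)$-tensor product never leaves $\catVir^{+}$, and it rests on the completeness of the fusion-rule computation of~\cite{KoshidaKytola2022} together with semisimplicity.
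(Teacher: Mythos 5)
Your overall strategy is the same as the paper's: use semisimplicity to reduce to simple test objects, and use the fusion rules of Theorem~\ref{thm:first-row_module_fusion} to produce the factoring morphism. The packaging differs: the paper constructs the unique morphism $f=\sum_{i}\iota_{i}\circ f_{i}$ component by component from a fixed irreducible decomposition of the test object (and then checks independence of the decomposition), whereas you recast universality as bijectivity of the post-composition map $\Phi_{\voamodgen}\colon\Hom(T,\voamodgen)\to\intsp{\virmod_{\ell_{1}}}{\virmod_{\ell_{2}}}{\voamodgen}$ and prove it by injectivity (leading exponents in distinct cosets of $\bC/\bZ$, which is correct since $t\notin\bQ$ and $\fusNorm_{\ell_{1}\,\ell_{2}}^{\ell}\neq 0$) plus a dimension count. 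Your version is arguably cleaner in that it avoids the paper's separate verification that $f$ does not depend on the chosen decomposition, at the cost of having to argue injectivity explicitly.

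There is one soft spot you should repair. You quantify $\Phi_{\voamodgen}$ over \emph{every} $\voagen$-module and then invoke ``semisimplicity of the ambient module category,'' and in your last sentence you identify the vanishing of intertwining operators into simple modules \emph{outside} the first row as the delicate input. The full module category of the generic Virasoro VOA is not semisimple and has far more simple objects than the $\virmod_{\ell}$ (e.g.\ simple quotients of Verma modules of arbitrary conformal weight), and Theorem~\ref{thm:first-row_module_fusion} computes fusion rules only for first-row targets $\virmod_{\ell_{3}}$, $\ell_{3}\in\bZ_{\geq 0}$; it does not give you the vanishing you appeal to. The way out is the one the paper takes: the $P(z)$-tensor product is defined relative to a chosen module category, so the universal property only needs to be tested against $P(z)$-products $(\voamodgen,F_{z})$ with $\voamodgen\in\catVir^{+}$, where semisimplicity holds by construction and Theorem~\ref{thm:first-row_module_fusion} supplies exactly the dimensions you need. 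With that restriction your argument closes; without it, the ``genuinely delicate input'' you flag is both unavailable from the cited results and, in fact, unnecessary.
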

\begin{proof}
Since the category $\catVir^{+}$ is semi-simple, it suffices to show that the $P(z)$-tensor product of simple objects exists and is given by the formulas \eqref{eq:Pz-tensor_frmods} and \eqref{eq:Pz-intmap_frmods}.
Let $\virmodgen\in \catVir^{+}$ together with $\voaint_{\virmodgen} (-,z)-\colon \virmod_{\ell_{1}}\otimes\virmod_{\ell_{2}}\to\ol{\virmodgen}$ be a $P(z)$-product of $\virmod_{\ell_{1}}$ and $\virmod_{\ell_{2}}$.
The object $\virmodgen$ can be decomposed into a direct sum of simple objects
$
 \virmodgen\simeq \bigoplus_{i=1}^{m}\virmod_{k_{i}}$,
where $k_{i}\in\bZ_{\geq 0}$, $i=1,\dots, m$.
In other words, we can find a family of injections $\iota_{i}\colon \virmod_{k_{i}}\to \virmodgen$ and projections $p_{i}\colon \virmodgen\to\virmod_{k_{i}}$, $i=1,\dots, m$, such that
\[
 p_{i}\circ \iota_{j}=\delta_{i,j}\id_{\virmod_{k_{i}}},\qquad i,j=1,\dots,m,\qquad \sum_{i=1}^{m}\iota_{i}\circ p_{i}=\id_{\virmodgen}.
\]
Then, for each $i=1,\dots, m$, the composition
$\ol{p}_{i}\circ \voaint_{\virmodgen}(-,z)-$
is an intertwining map of type \smash{$\inttyp{\virmod_{\ell_{1}}}{\virmod_{\ell_{2}}}{\virmod_{k_{i}}}$}.
Hence, there exists a unique homomorphism
\begin{gather*}
 f_{i}\colon\ \bigoplus_{\ell\in\Sel(\ell_{1},\ell_{2})}\virmod_{\ell}\to\virmod_{k_{i}}
\end{gather*}
such that
\begin{gather*}
 \ol{p}_{i}\circ \voaint_{\virmodgen}(-,z)-=\ol{f}_{i}\circ \voaint_{\virmod_{\ell_{1}}\boxtimes_{P(z)}\virmod_{\ell_{2}}}(-,z)-.
\end{gather*}
Therefore, the sum $f=\sum_{i=1}^{m}\iota_{i}\circ f_{i}$ is a homomorphism such that
\[
 \voaint_{\virmodgen}(-,z)-=\sum_{i=1}^{m}\ol{\iota_{i}\circ p_{i}}\circ \voaint_{\virmodgen}(-,z)-=\ol{f}\circ \voaint_{\virmod_{\ell_{1}}\boxtimes_{P(z)}\virmod_{\ell_{2}}}(-,z)-.
\]

Next, suppose that
\begin{gather*}
	f' \colon\ \bigoplus_{\ell\in\Sel(\ell_{1},\ell_{2})}\virmod_{\ell}\to \virmodgen
\end{gather*}
is a homomorphism such that
\begin{gather*}
	\voaint_{\virmodgen}(-,z)-=\ol{f'}\circ \voaint_{\virmod_{\ell_{1}}\boxtimes_{P(z)}\virmod_{\ell_{2}}}(-,z)-.
\end{gather*}
For each $i=1,\dots, m$, post-composition of both sides with $\ol{p_{i}}$ along with the uniqueness of $f_{i}$
give us $f_{i} = p_{i}\circ f'$. Thus, we have
$
	f' = \sum_{i=1}^{m}\iota_{i}\circ f_{i} = f
$
proving the uniqueness of $f$.
\end{proof}

We write $\pzproj{z}{\ell_{1}}{\ell_{2}}{\ell}$ for the canonical projection from $\virmod_{\ell_{1}}\boxtimes_{P(z)}\virmod_{\ell_{2}}$ to $\virmod_{\ell}$ for $\ell\in \Sel (\ell_{1},\ell_{2})$ according to the realization \eqref{eq:Pz-tensor_frmods}.
Then, it is characterized by the property
\begin{gather*}
 \ol{\pzproj{z}{\ell_{1}}{\ell_{2}}{\ell}} (\vmodvec_{1}\boxtimes_{P(z)}\vmodvec_{2})=\voaint_{\ell_{1} \ell_{2}}^{\ell}(\vmodvec_{1},z)\vmodvec_{2},\qquad \vmodvec_{1}\in\virmod_{\ell_{1}}, \quad \vmodvec_{2}\in\virmod_{\ell_{2}}.
\end{gather*}

Given two points $z_{1},z_{2}\in \bC^{\times}$ and a path $\gamma$ from $z_{1}$ to $z_{2}$, we can define the parallel transport isomorphism $\cT_{\gamma}\colon \boxtimes_{P(z_{1})}\Rightarrow \boxtimes_{P(z_{2})}$ as we have explained in Section~\ref{sect:module_cat_VOA}.
Although the parallel transport depends on (the homotopy class of) the path $\gamma$, its action can be described in a simple way, especially in the case when $\gamma$ does not change the branch.

\begin{prop}
\label{prop:triviality_parallel_transport}
Let $z_{1},z_{2}\in \bC^{\times}$ and $\gamma$ be a path from $z_{1}$ to $z_{2}$, along which ${\rm e}^{l_{\gamma}(z_{1})}$ has the same angle as $z_{1}$ in $[0,2\pi)$.
Suppose that, for $\ell_{1},\ell_{2}\in \bZ_{\geq 0}$, the $P(z)$-tensor product $\virmod_{\ell_{1}}\boxtimes_{P(z)}\virmod_{\ell_{2}}$ is realized by the formulas \eqref{eq:Pz-tensor_frmods} and \eqref{eq:Pz-intmap_frmods}.
Then, the parallel transport $\cT_{\gamma}$ acts as $\id_{\virmod_{\ell}}$ on each component $\virmod_{\ell}$, $\ell\in \Sel(\ell_{1},\ell_{2})$ appearing in \eqref{eq:Pz-tensor_frmods}.
\end{prop}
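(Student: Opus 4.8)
The plan is to combine the explicit realization of both tensor products as $\bigoplus_{\ell\in\Sel(\ell_{1},\ell_{2})}\virmod_{\ell}$ with the characterization of parallel transport from Section~\ref{sect:module_cat_VOA}, and to reduce everything to a scalar computation. First I would observe that $\cT_{\gamma}\colon \virmod_{\ell_{1}}\boxtimes_{P(z_{1})}\virmod_{\ell_{2}}\to \virmod_{\ell_{1}}\boxtimes_{P(z_{2})}\virmod_{\ell_{2}}$ is a $\virvoa_{c}$-module isomorphism between two copies of $\bigoplus_{\ell\in\Sel(\ell_{1},\ell_{2})}\virmod_{\ell}$. Since this decomposition is multiplicity-free into pairwise non-isomorphic simple modules, $\cT_{\gamma}$ must preserve the grading by $\ell$, and by Schur's lemma it acts on each summand as a nonzero scalar, $\cT_{\gamma}|_{\virmod_{\ell}}=c_{\ell}\,\id_{\virmod_{\ell}}$. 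The whole task then reduces to showing $c_{\ell}=1$ for every $\ell\in\Sel(\ell_{1},\ell_{2})$.

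Second, I would extract $c_{\ell}$ by composing with the canonical projection on the target. The scalar description yields $\pzproj{z_{2}}{\ell_{1}}{\ell_{2}}{\ell}\circ\cT_{\gamma}=c_{\ell}\,\pzproj{z_{1}}{\ell_{1}}{\ell_{2}}{\ell}$. Evaluating the completions of both sides on a generic element $\vmodvec_{1}\boxtimes_{P(z_{1})}\vmodvec_{2}$, and invoking the defining property of the projections together with the characterization of $\cT_{\gamma}$ and the intertwining-map realization~\eqref{eq:Pz-intmap_frmods}, I obtain on the left $\voaint_{\ell_{1}\,\ell_{2}}^{\ell}(\vmodvec_{1},x)\vmodvec_{2}|_{x=e^{l_{\gamma}(z_{1})}}$ and on the right $c_{\ell}\,\voaint_{\ell_{1}\,\ell_{2}}^{\ell}(\vmodvec_{1},x)\vmodvec_{2}|_{x=e^{\log(z_{1})}}$.

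Third, I would feed in the hypothesis on $\gamma$. Since $e^{l_{\gamma}(z_{1})}$ carries the same argument as $z_{1}$ in $[0,2\pi)$, the analytically continued logarithm satisfies $l_{\gamma}(z_{1})=\log(z_{1})$; in other words $\gamma$ does not move the branch. Consequently the two specializations $x=e^{l_{\gamma}(z_{1})}$ and $x=e^{\log(z_{1})}$ agree, and the identity from the previous step collapses to $\Phi=c_{\ell}\,\Phi$, where $\Phi=\voaint_{\ell_{1}\,\ell_{2}}^{\ell}(\vmodvec_{1},x)\vmodvec_{2}|_{x=e^{\log(z_{1})}}$. It then suffices to exhibit $\vmodvec_{1},\vmodvec_{2}$ with $\Phi\neq 0$.

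Finally, the nonvanishing is read off from the normalization of the intertwining operator: taking $\vmodvec_{1}=\hwvec_{\ell_{1}}$ and $\vmodvec_{2}=\hwvec_{\ell_{2}}$, the leading term of $\voaint_{\ell_{1}\,\ell_{2}}^{\ell}(\hwvec_{\ell_{1}},x)\hwvec_{\ell_{2}}$ is $\fusNorm_{\ell_{1}\,\ell_{2}}^{\ell}\,\hwvec_{\ell}\,x^{h_{\ell}-h_{\ell_{1}}-h_{\ell_{2}}}$ with $\fusNorm_{\ell_{1}\,\ell_{2}}^{\ell}\neq 0$ for generic $t$, and this term survives the specialization at $x=e^{\log(z_{1})}\neq 0$. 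Hence $\Phi\neq 0$, forcing $c_{\ell}=1$, which is the claim. I expect the only genuinely delicate point to be the bookkeeping of the third step, namely correctly translating the hypothesis on $\gamma$ into the equality $l_{\gamma}(z_{1})=\log(z_{1})$; everything else is forced by semisimplicity, Schur's lemma, and the explicit form of the $P(z)$-tensor product.
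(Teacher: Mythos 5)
Your proof is correct and rests on the same key facts as the paper's: the characterization of $\cT_{\gamma}$ via the intertwining operator evaluated at $e^{l_{\gamma}(z_{1})}$, and the observation that the hypothesis on $\gamma$ forces $l_{\gamma}(z_{1})=\log(z_{1})$. The Schur's-lemma reduction and the nonvanishing check via $\fusNorm_{\ell_{1}\,\ell_{2}}^{\ell}$ are harmless extra scaffolding that the paper avoids by noting directly that $\cT_{\gamma}$ is determined by its values on the elements $\vmodvec_{1}\boxtimes_{P(z_{1})}\vmodvec_{2}$.
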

\begin{proof}
In the realization in \eqref{eq:Pz-tensor_frmods} and \eqref{eq:Pz-intmap_frmods}, the $P(z)$-tensor product does not depend on $z$ at the object level, so we can think that $\virmod_{\ell_{1}}\boxtimes_{P(z_{1})}\virmod_{\ell_{2}}=\virmod_{\ell_{1}}\boxtimes_{P(z_{2})}\virmod_{\ell_{2}}$ as objects.
Recall that~$\cT_{\gamma}$ is characterized by the property
\begin{gather*}
 \ol{\cT}_{\gamma}\colon\ \sum_{\ell\in\Sel(\ell_{1},\ell_{2})}\voaint_{\ell_{1} \ell_{2}}^{\ell}(\vmodvec_{1},z_{1})\vmodvec_{2}\mapsto \sum_{\ell\in\Sel(\ell_{1},\ell_{2})}\voaint_{\ell_{1} \ell_{2}}^{\ell}\big(\vmodvec_{1},{\rm e}^{l_{\gamma}(z_{1})}\big)\vmodvec_{2}
\end{gather*}
for $\vmodvec_{1}\in\virmod_{\ell_{1}}, \vmodvec_{2}\in\virmod_{\ell_{2}}$,
but under the assumption on $\gamma$, we have
\begin{gather*}
 \voaint_{\ell_{1} \ell_{2}}^{\ell}\big(\vmodvec_{1},{\rm e}^{l_{\gamma}(z_{1})}\big)\vmodvec_{2}=\voaint_{\ell_{1} \ell_{2}}^{\ell}(\vmodvec_{1},z_{1})\vmodvec_{2},\qquad \ell\in \Sel (\ell_{1},\ell_{2}).
\end{gather*}
Therefore, $\cT_{\gamma}$ acts as $\id_{\virmod_{\ell}}$ on each $\virmod_{\ell}$, $\ell\in\Sel (\ell_{1},\ell_{2})$.
\end{proof}

Let us look into the associativity isomorphism on $\catVir^{+}$.
First, we clarify the structure of the composition and iteration of the tensor product in more detail, focusing our attention on simple objects.
We take $z_{1}$, $z_{2}$ on the real axis so that $z_{1}>z_{2}>z_{1}-z_{2}>0$ and the paths $\gamma_{1}$, $\gamma_{2}$, $\gamma_{3}$,~$\gamma_{4}$ as before.
Let us fix $\ell_{1},\ell_{2},\ell_{3}\in\bZ_{\geq 0}$.
As we have already noticed, according to the realization of the $P(z)$-tensor product as in \eqref{eq:Pz-tensor_frmods} and \eqref{eq:Pz-intmap_frmods}, we can identify
\begin{gather}
\label{eq:composition_frmods}
 \virmod_{\ell_{1}}\boxtimes_{P(z_{1})} (\virmod_{\ell_{2}}\boxtimes_{P(z_{2})} \virmod_{\ell_{3}})
 =\virmod_{\ell_{1}}\boxtimes (\virmod_{\ell_{2}}\boxtimes \virmod_{\ell_{3}})=\bigoplus_{\ell_{\infty}\in\bZ_{\geq 0}}\bigoplus_{n\in \Chan{\ell_{2}}{\ell_{3}}{\ell_{1}}{\ell_{\infty}}}(\virmod_{\ell_{\infty}})^{(n)},
\end{gather}
where $(\virmod_{\ell_{\infty}})^{(n)}$ are copies of $\virmod_{\ell_{\infty}}$ and the canonical projection to each $(\virmod_{\ell_{\infty}})^{(n)}$ is given by~\smash{$
 \pzproj{z_{1}}{\ell_{1}}{n}{\ell_{\infty}}\circ \big(\id_{\virmod_{\ell_{1}}}\boxtimes_{P(z_{1})}\pzproj{z_{2}}{\ell_{2}}{\ell_{3}}{n}\big)
$}.
From the definition, it is clear that
\begin{gather*}
 \ol{\pzproj{z_{1}}{\ell_{1}}{n}{\ell_{\infty}}\circ \big(\id_{\virmod_{\ell_{1}}}\boxtimes_{P(z_{1})}\pzproj{z_{2}}{\ell_{2}}{\ell_{3}}{n}\big)}\colon \\
 \qquad \vmodvec_{1}\boxtimes_{P(z_{1})}(\vmodvec_{2}\boxtimes_{P(z_{2})}\vmodvec_{3}) \mapsto \voaint_{\ell_{1} n}^{\ell_{\infty}}(\vmodvec_{1},z_{1})\voaint_{\ell_{2} \ell_{3}}^{n}(\vmodvec_{2},z_{2})\vmodvec_{3}
\end{gather*}
for $\vmodvec_{i}\in\virmod_{\ell_{i}}$, $i=1,2,3$.
Furthermore, from Proposition~\ref{prop:triviality_parallel_transport}, the composition of parallel transports $\cT_{\gamma_{1}}\circ (\id_{\virmod_{\ell_{1}}}\boxtimes_{P(z_{1})} \cT_{\gamma_{2}})$ acts as $\id_{(\virmod_{\ell_{\infty}})^{(n)}}$ on each component $(\virmod_{\ell_{\infty}})^{(n)}$, $\ell_{\infty}\in\bZ_{\geq 0}$, $n\in \Chan{\ell_{2}}{\ell_{3}}{\ell_{1}}{\ell_{\infty}}$.

Similarly, for the iteration, we have the identification
\begin{gather}
\label{eq:iteration_frmods}
 (\virmod_{\ell_{1}}\boxtimes_{P(z_{1}-z_{2})}\virmod_{\ell_{2}})\boxtimes_{P(z_{2})}\virmod_{\ell_{3}}
 =(\virmod_{\ell_{1}}\boxtimes \virmod_{\ell_{2}})\boxtimes\virmod_{\ell_{3}}
 =\bigoplus_{\ell_{\infty}\in\bZ_{\geq 0}}\bigoplus_{m\in I^{\ell_{1} \ell_{2}}_{\ell_{3} \ell_{\infty}}}(\virmod_{\ell_{\infty}})_{(m)},
\end{gather}
where each copy $(\virmod_{\ell_{\infty}})_{(m)}$ is the image of the projection
$
 \pzproj{z_{2}}{m}{\ell_{3}}{\ell_{\infty}}\circ \big(\pzproj{z_{1}-z_{2}}{\ell_{1}}{\ell_{2}}{m}\boxtimes_{P(z_{2})}\id_{\virmod_{\ell_{3}}}\big)$.
Again, from the definition, this projection is characterized by the property
\begin{gather*}
 \ol{\pzproj{z_{2}}{m}{\ell_{3}}{\ell_{\infty}}\circ \big(\pzproj{z_{1}-z_{2}}{\ell_{1}}{\ell_{2}}{m}\boxtimes_{P(z_{2})}\id_{\virmod_{\ell_{3}}}\big)}\colon \\
 \qquad(\vmodvec_{1}\boxtimes_{P(z_{1}-z_{2})}\vmodvec_{2})\boxtimes_{P(z_{2})}\vmodvec_{3} \mapsto \voaint_{m \ell_{3}}^{\ell_{\infty}}(\voaint_{\ell_{1} \ell_{2}}^{m}(\vmodvec_{1},z_{1}-z_{2})\vmodvec_{2},z_{2})\vmodvec_{3}
\end{gather*}
for $\vmodvec_{i}\in\virmod_{\ell_{i}}$, $i=1,2,3$.
By Proposition~\ref{prop:triviality_parallel_transport}, the composition of parallel transports $(\cT_{\gamma_{4}}\boxtimes_{P(z_{2})}\id_{\virmod_{\ell_{3}}})\circ \cT_{\gamma_{3}}$ acts as $\id_{(\virmod_{\ell_{\infty}})_{(m)}}$ on each component $(\virmod_{\ell_{\infty}})_{(m)}$, $\ell_{\infty}\in \bZ_{\geq 0}$, \smash{$m\in \Chan{\ell_{1}}{\ell_{2}}{\ell_{3}}{\ell_{\infty}}$}.

The above observations allow us to conclude that the associativity isomorphism $\cA_{\virmod_{\ell_{1}},\virmod_{\ell_{2}},\virmod_{\ell_{3}}}$ coincides with the resolved version \smash{$\big(\cA_{P(z_{1}),P(z_{2})}^{P(z_{1}-z_{2}),P(z_{2})}\big)_{\virmod_{\ell_{1}},\virmod_{\ell_{2}},\virmod_{\ell_{3}}}$} under the identifications \eqref{eq:composition_frmods} and \eqref{eq:iteration_frmods} and amounts to a homomorphism of the form
\begin{gather*}
 \cA_{\virmod_{\ell_{1}},\virmod_{\ell_{2}},\virmod_{\ell_{3}}}=\sum_{\ell_{\infty}\in\bZ_{\geq 0}}\sum_{m\in \Chan{\ell_{1}}{\ell_{2}}{\ell_{3}}{\ell_{\infty}}}\sum_{n\in \Chan{\ell_{2}}{\ell_{3}}{\ell_{1}}{\ell_{\infty}}}\ascmtx{\ell_{1}}{\ell_{2}}{\ell_{3}}{\ell_{\infty}}{m}{n},\qquad \ascmtx{\ell_{1}}{\ell_{2}}{\ell_{3}}{\ell_{\infty}}{m}{n}\colon\ (\virmod_{\ell_{\infty}})_{(m)}\to (\virmod_{\ell_{\infty}})^{(n)}.
\end{gather*}

\begin{thm}
\label{thm:assoc_6j}
The associativity isomorphism $\cA_{\virmod_{\ell_{1}},\virmod_{\ell_{2}},\virmod_{\ell_{3}}}$ is given by
\begin{gather*}
 \ascmtx{\ell_{1}}{\ell_{2}}{\ell_{3}}{\ell_{\infty}}{m}{n}=\sixjsymb{\ell_{1}}{\ell_{2}}{\ell_{3}}{\ell_{\infty}}{m}{n}\id_{\virmod_{\ell_{\infty}}},\qquad \ell_{\infty}\in\bZ_{\geq 0}, \quad m\in \Chan{\ell_{1}}{\ell_{2}}{\ell_{3}}{\ell_{\infty}}, \quad n\in \Chan{\ell_{2}}{\ell_{3}}{\ell_{1}}{\ell_{\infty}}.
\end{gather*}
Here, the $6$j-symbols are those of $\Uq$ at \smash{$q={\rm e}^{\pi\bbmi t}$} equipped with the opposite coproduct $\QGcoprod^{\op}$.
\end{thm}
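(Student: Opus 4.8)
The plan is to read off the blocks $\ascmtx{\ell_{1}}{\ell_{2}}{\ell_{3}}{\ell_{\infty}}{m}{n}$ directly from the analytic associativity recorded in Theorem~\ref{thm:associativity_int_op_frmods}, treating the identifications (\ref{eq:composition_frmods}) and (\ref{eq:iteration_frmods}) as the dictionary that converts the categorical tensor product into explicit compositions and iterations of intertwining operators. The passage preceding the statement has already reduced $\cA_{\virmod_{\ell_{1}},\virmod_{\ell_{2}},\virmod_{\ell_{3}}}$ to its block form, with each $\ascmtx{\ell_{1}}{\ell_{2}}{\ell_{3}}{\ell_{\infty}}{m}{n}$ a morphism $(\virmod_{\ell_{\infty}})_{(m)}\to(\virmod_{\ell_{\infty}})^{(n)}$ between copies of the simple module $\virmod_{\ell_{\infty}}$. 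By Schur's lemma such a morphism is a scalar multiple of the identity, so I first write $\ascmtx{\ell_{1}}{\ell_{2}}{\ell_{3}}{\ell_{\infty}}{m}{n}=a^{m}_{n}\,\id_{\virmod_{\ell_{\infty}}}$ and reduce the whole problem to determining the scalars $a^{m}_{n}$.

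To pin these down I would test $\cA_{\virmod_{\ell_{1}},\virmod_{\ell_{2}},\virmod_{\ell_{3}}}$ on the distinguished vectors $(\vmodvec_{1}\boxtimes_{P(z_{1}-z_{2})}\vmodvec_{2})\boxtimes_{P(z_{2})}\vmodvec_{3}$ and evaluate in two ways. Using the characterization of $\cA$ as the resolved associativity, this vector maps to $\vmodvec_{1}\boxtimes_{P(z_{1})}(\vmodvec_{2}\boxtimes_{P(z_{2})}\vmodvec_{3})$, and post-composing with the composition projection $\pzproj{z_{1}}{\ell_{1}}{n}{\ell_{\infty}}\circ(\id_{\virmod_{\ell_{1}}}\boxtimes_{P(z_{1})}\pzproj{z_{2}}{\ell_{2}}{\ell_{3}}{n})$ yields $\voaint_{\ell_{1}\,n}^{\ell_{\infty}}(\vmodvec_{1},z_{1})\voaint_{\ell_{2}\,\ell_{3}}^{n}(\vmodvec_{2},z_{2})\vmodvec_{3}$. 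Expanding $\cA$ instead through its block form and applying the iteration projection $\pzproj{z_{2}}{m}{\ell_{3}}{\ell_{\infty}}\circ(\pzproj{z_{1}-z_{2}}{\ell_{1}}{\ell_{2}}{m}\boxtimes_{P(z_{2})}\id_{\virmod_{\ell_{3}}})$ to the same vector produces $\sum_{m}a^{m}_{n}\,\voaint_{m\,\ell_{3}}^{\ell_{\infty}}(\voaint_{\ell_{1}\,\ell_{2}}^{m}(\vmodvec_{1},z_{1}-z_{2})\vmodvec_{2},z_{2})\vmodvec_{3}$. Equating the two outputs and comparing with Theorem~\ref{thm:associativity_int_op_frmods} forces $a^{m}_{n}=\sixjsymb{\ell_{1}}{\ell_{2}}{\ell_{3}}{\ell_{\infty}}{m}{n}$, which is exactly the claim.

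The one step that deserves genuine care is the final coefficient comparison, since it presupposes that the iteration expressions $\voaint_{m\,\ell_{3}}^{\ell_{\infty}}(\voaint_{\ell_{1}\,\ell_{2}}^{m}(\vmodvec_{1},z_{1}-z_{2})\vmodvec_{2},z_{2})\vmodvec_{3}$ are linearly independent as $m$ ranges over $\Chan{\ell_{1}}{\ell_{2}}{\ell_{3}}{\ell_{\infty}}$. I expect this to be the main (though modest) obstacle, and I would settle it by observing that these expressions are precisely the values of the distinct projections onto the direct summands $(\virmod_{\ell_{\infty}})_{(m)}$ in the genuine decomposition (\ref{eq:iteration_frmods}); being images of distinct summands under an injective intertwining map, they are linearly independent in $\ol{\virmod}_{\ell_{\infty}}$, which legitimizes matching coefficients channel by channel. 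Everything else is bookkeeping already assembled before the statement, so the theorem is essentially a faithful categorical restatement of the analytic associativity. Finally, since the $6j$-symbols on the right of Theorem~\ref{thm:associativity_int_op_frmods} are by construction those of $\Uq$ equipped with $\QGcoprod^{\op}$ at $q=e^{\pi\bbmi t}$, no further identification of parameters is required to conclude.
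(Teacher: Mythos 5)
Your proposal is correct and follows essentially the same route as the paper: both reduce the theorem to the identity of Theorem~\ref{thm:associativity_int_op_frmods} evaluated on the distinguished vectors $(\vmodvec_{1}\boxtimes_{P(z_{1}-z_{2})}\vmodvec_{2})\boxtimes_{P(z_{2})}\vmodvec_{3}$, via the identifications (\ref{eq:composition_frmods}) and (\ref{eq:iteration_frmods}). The only difference is the direction of the argument --- the paper plugs in the $6j$-coefficients and verifies that the resulting map satisfies the characterizing property of $\cA_{\virmod_{\ell_{1}},\virmod_{\ell_{2}},\virmod_{\ell_{3}}}$, whereas you extract the coefficients from that property and therefore need (and correctly justify) the linear independence of the iteration expressions over $m$, a step the paper's direction sidesteps.
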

\begin{proof}
It suffices to show that the isomorphism $\cA_{\virmod_{\ell_{1}},\virmod_{\ell_{2}},\virmod_{\ell_{3}}}$ defined in such a way behaves as
\begin{gather*}
 \cA_{\virmod_{\ell_{1}},\virmod_{\ell_{2}},\virmod_{\ell_{3}}}\colon\ (\vmodvec_{1}\boxtimes_{P(z_{1}-z_{2})}\vmodvec_{2})\boxtimes_{P(z_{2})}\vmodvec_{3}\mapsto \vmodvec_{1}\boxtimes_{P(z_{1})}(\vmodvec_{2}\boxtimes_{P(z_{2})}\vmodvec_{3})
\end{gather*}
for $\vmodvec_{i}\in\virmod_{\ell_{i}}$, $i=1,2,3$.
Recall that
\begin{gather*}
 (\vmodvec_{1}\boxtimes_{P(z_{1}-z_{2})}\vmodvec_{2})\boxtimes_{P(z_{2})}\vmodvec_{3}=\sum_{\ell_{\infty}\in\bZ_{\geq 0}}\sum_{m\in \Chan{\ell_{1}}{\ell_{2}}{\ell_{3}}{\ell_{\infty}}}\voaint_{m \ell_{3}}^{\ell_{\infty}}(\voaint_{\ell_{1} \ell_{2}}^{m}(\vmodvec_{1},z_{1}-z_{2})\vmodvec_{2},z_{2})\vmodvec_{3},
\end{gather*}
which is sent by $\cA_{\virmod_{\ell_{1}},\virmod_{\ell_{2}},\virmod_{\ell_{3}}}$ to
\begin{align}
\label{eq:associativity_computation_intermediate}
 \sum_{\ell_{\infty}\in\bZ_{\geq 0}}\sum_{n\in \Chan{\ell_{2}}{\ell_{3}}{\ell_{1}}{\ell_{\infty}}}\bigg(\sum_{m\in \Chan{\ell_{1}}{\ell_{2}}{\ell_{3}}{\ell_{\infty}}}\sixjsymb{\ell_{1}}{\ell_{2}}{\ell_{3}}{\ell_{\infty}}{m}{n}\voaint_{m \ell_{3}}^{\ell_{\infty}}\bigg(\voaint_{\ell_{1} \ell_{2}}^{m}(\vmodvec_{1},z_{1}-z_{2})\vmodvec_{2},z_{2}\bigg)\vmodvec_{3}\bigg),
\end{align}
where each summand
\begin{gather*}
 \sum_{m\in \Chan{\ell_{1}}{\ell_{2}}{\ell_{3}}{\ell_{\infty}}}\sixjsymb{\ell_{1}}{\ell_{2}}{\ell_{3}}{\ell_{\infty}}{m}{n}\voaint_{m \ell_{3}}^{\ell_{\infty}}(\voaint_{\ell_{1} \ell_{2}}^{m}(\vmodvec_{1},z_{1}-z_{2})\vmodvec_{2},z_{2})\vmodvec_{3}
\end{gather*}
lives in $(\virmod_{\ell_{\infty}})^{(n)}$.
Here, we use Theorem~\ref{thm:associativity_int_op_frmods} to conclude that \eqref{eq:associativity_computation_intermediate} coincides with
\begin{align*}
 \sum_{\ell_{\infty}\in\bZ_{\geq 0}}\sum_{n\in \Chan{\ell_{2}}{\ell_{3}}{\ell_{1}}{\ell_{\infty}}}\voaint_{\ell_{1} n}^{\ell_{\infty}}(\vmodvec_{1},z_{1})\voaint_{\ell_{2} \ell_{3}}^{n}(\vmodvec_{2},z_{2})\vmodvec_{3}=\vmodvec_{1}\boxtimes_{P(z_{1})}(\vmodvec_{2}\boxtimes_{P(z_{2})}\vmodvec_{3}).
\end{align*}
Therefore, the isomorphism $\cA_{\virmod_{\ell_{1}},\virmod_{\ell_{2}},\virmod_{\ell_{3}}}$ satisfies the desired property.
\end{proof}

We can define unit isomorphisms $\lambda_{\virmodgen}\colon \virvoa_{c}\boxtimes\virmodgen\to\virmodgen$ and $\rho_{\virmodgen}\colon \virmodgen\boxtimes\virvoa_{c}\to \virmodgen$ for each $\virmodgen\in\catVir^{+}$ according to the general theory.
If $\virmodgen=\virmod_{\ell}$, $\ell\in\bZ_{\geq 0}$ is a simple object, we think that $\virvoa_{c}\boxtimes\virmod_{\ell}=\virmod_{\ell}=\virmod_{\ell}\boxtimes\virvoa_{c}$ under the realization in \eqref{eq:Pz-tensor_frmods} and \eqref{eq:Pz-intmap_frmods}.
\begin{thm}
\label{thm:unit_isom_catvir}
Let $\ell\in\bZ_{\geq 0}$. Under the realization in \eqref{eq:Pz-tensor_frmods} and \eqref{eq:Pz-intmap_frmods}, both $\lambda_{\virmod_{\ell}}\colon \virvoa_{c}\boxtimes \virmod_{\ell}=\virmod_{\ell}\to\virmod_{\ell}$ and $\rho_{\virmod_{\ell}}\colon \virmod_{\ell}\boxtimes\virvoa_{c}=\virmod_{\ell}\to\virmod_{\ell}$ are $\id_{\virmod_{\ell}}$.
\end{thm}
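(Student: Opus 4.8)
The plan is to reduce both statements to the behaviour of a single intertwining operator on simple objects, using the explicit realization of the $P(1)$-tensor product in (\ref{eq:Pz-tensor_frmods}) and (\ref{eq:Pz-intmap_frmods}) together with the special-case properties recorded in Remark~\ref{rem:intertw_Virasoro_special_cases}. Since $\virvoa_{c}=\virmod_{0}$ and $\Sel(0,\ell)=\Sel(\ell,0)=\{\ell\}$, the realization gives $\virvoa_{c}\boxtimes\virmod_{\ell}=\virmod_{\ell}=\virmod_{\ell}\boxtimes\virvoa_{c}$ at the level of objects, with tensor maps determined by $\voavac\boxtimes\vmodvec=\voaint_{0\,\ell}^{\ell}(\voavac,1)\vmodvec$ and $\vmodvec\boxtimes\voavac=\voaint_{\ell\,0}^{\ell}(\vmodvec,1)\voavac$ (note $x=e^{\log 1}=1$). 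Both $\lambda_{\virmod_{\ell}}$ and $\rho_{\virmod_{\ell}}$ are module isomorphisms of the simple object $\virmod_{\ell}$, hence scalars by Schur's lemma, and are pinned down by their prescribed action on such elements; so it suffices to identify the right-hand sides.

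First I would treat $\lambda_{\virmod_{\ell}}$. By Remark~\ref{rem:intertw_special_cases} and Remark~\ref{rem:intertw_Virasoro_special_cases} we have $\voaint_{0\,\ell}^{\ell}(-,x)=\voaY_{\virmod_{\ell}}(-,x)$ and in particular $\voaint_{0\,\ell}^{\ell}(\voavac,1)\vmodvec=\vmodvec$. Thus under the identification $\virvoa_{c}\boxtimes\virmod_{\ell}=\virmod_{\ell}$ the element $\voavac\boxtimes\vmodvec$ is literally $\vmodvec\in\virmod_{\ell}$, an honest (not merely completed) vector, so the defining property $\lambda_{\virmod_{\ell}}\colon\voavac\boxtimes\vmodvec\mapsto\vmodvec$ shows directly that $\lambda_{\virmod_{\ell}}=\id_{\virmod_{\ell}}$.

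The right unit is the substantive case, because its characterization $\ol{\rho_{\virmod_{\ell}}}\colon\vmodvec\boxtimes\voavac\mapsto e^{L_{-1}^{\virmod_{\ell}}}\vmodvec$ involves an element $e^{L_{-1}^{\virmod_{\ell}}}\vmodvec$ that lives genuinely in the completion $\ol{\virmod_{\ell}}$. Concretely I would establish the stronger identity $\voaint_{\ell\,0}^{\ell}(\vmodvec,x)\voavac=e^{x L_{-1}^{\virmod_{\ell}}}\vmodvec$ for all $\vmodvec\in\virmod_{\ell}$; then $\vmodvec\boxtimes\voavac=e^{L_{-1}^{\virmod_{\ell}}}\vmodvec$, the defining property reads $\ol{\rho_{\virmod_{\ell}}}(e^{L_{-1}}\vmodvec)=e^{L_{-1}}\vmodvec$, and since $\rho_{\virmod_{\ell}}$ is a scalar it must be $1$, giving $\rho_{\virmod_{\ell}}=\id_{\virmod_{\ell}}$. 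Remark~\ref{rem:intertw_Virasoro_special_cases} records this identity only for $\vmodvec=\hwvec_{\ell}$, so the real work is to promote it to the whole module. The exponential shape comes for free: the $L_{-1}$-commutator extracted from the Jacobi identity, evaluated on the vacuum with $L_{-1}^{\virvoa_{c}}\voavac=0$, yields the differential equation $\tfrac{d}{dx}\voaint_{\ell\,0}^{\ell}(\vmodvec,x)\voavac=L_{-1}^{\virmod_{\ell}}\voaint_{\ell\,0}^{\ell}(\vmodvec,x)\voavac$, whose solution is $e^{xL_{-1}^{\virmod_{\ell}}}$ applied to the constant term in $x$.

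I expect the main obstacle to be showing that this constant term, the creation map $\vmodvec\mapsto\lim_{x\to 0}\voaint_{\ell\,0}^{\ell}(\vmodvec,x)\voavac$, equals $\vmodvec$. My plan is a Schur-type argument. Using that $\voaint_{\ell\,0}^{\ell}(-,x)\voavac$ lies in $\virmod_{\ell}[[x]]$ (lower truncation at the vacuum) and the full family of $L_{n}$-commutators from the Jacobi identity, again with $L_{n}^{\virvoa_{c}}\voavac=0$ for $n\geq -1$, one checks that all higher-order-in-$x$ contributions drop out in the $x\to 0$ limit, so that the creation map is a $\virvoa_{c}$-module endomorphism of $\virmod_{\ell}$. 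Since $\virmod_{\ell}$ is simple, Schur's lemma forces it to be a scalar, and the normalization $\voaint_{\ell\,0}^{\ell}(\hwvec_{\ell},x)\voavac\in\hwvec_{\ell}+\virmod_{\ell}[[x]]x$ fixes the scalar to be $1$. This yields the creation formula for all $\vmodvec$, completes the identification $\vmodvec\boxtimes\voavac=e^{L_{-1}^{\virmod_{\ell}}}\vmodvec$, and hence proves $\rho_{\virmod_{\ell}}=\id_{\virmod_{\ell}}$.
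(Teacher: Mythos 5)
Your proposal is correct and, for the left unit, identical to the paper's argument. For the right unit the paper simply cites Remark~\ref{rem:intertw_Virasoro_special_cases} and writes $\vmodvec\boxtimes\voavac=e^{L_{-1}^{\virmod_{\ell}}}\vmodvec$ for arbitrary $\vmodvec$, even though that remark records the creation identity only for the highest weight vector $\hwvec_{\ell}$ (deduced there from the Jacobi identity and the $L_{-1}$-derivation property); you correctly flag this and supply the missing extension to all of $\virmod_{\ell}$, which is the one genuine difference between the two arguments. Your route --- the ODE $\tfrac{d}{dx}\voaint_{\ell\,0}^{\ell}(\vmodvec,x)\voavac=L_{-1}^{\virmod_{\ell}}\voaint_{\ell\,0}^{\ell}(\vmodvec,x)\voavac$ forcing the shape $e^{xL_{-1}^{\virmod_{\ell}}}a_{0}(\vmodvec)$, followed by an irreducibility argument pinning down the constant term $a_{0}$ --- is a legitimate alternative to the more standard one implicit in the paper, namely applying the skew-symmetry operator $\Omega$ (used later in the braiding proof) to $\voaY_{\virmod_{\ell}}$ and invoking one-dimensionality of the fusion space, which yields $\voaint_{\ell\,0}^{\ell}(\vmodvec,x)\voavac=e^{xL_{-1}^{\virmod_{\ell}}}\vmodvec$ on the whole module in one stroke. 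One small imprecision in your version: extracting the constant term of the $L_{n}$-commutators applied to $\voavac$ gives $L_{n}a_{0}=a_{0}L_{n}$ only for $n\geq -1$ (for $n\leq -2$ one has $L_{n}\voavac\neq 0$ and the commutator sum does not truncate), so $a_{0}$ is not directly shown to be a module endomorphism and Schur's lemma does not literally apply. This is easily repaired: $a_{0}$ preserves the grading, commutes with $L_{n}$ for $n\geq 1$, and fixes $\hwvec_{\ell}$, so if $a_{0}-\id$ were nonzero, a nonzero homogeneous vector in its image of minimal positive depth would be a singular vector, contradicting the irreducibility of $\virmod_{\ell}$. With that adjustment your argument is complete and in fact slightly more self-contained than the paper's.
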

\begin{proof}
Recall that the left unit isomorphism $\lambda_{\virmod_{\ell}}$ is characterized by $\lambda_{\virmod_{\ell}}(\voavac\boxtimes \vmodvec)=\vmodvec$, $\vmodvec\in\virmod_{\ell}$.
From \eqref{eq:Pz-intmap_frmods}, we have
\smash{$
 \voavac\boxtimes \vmodvec=\voaint_{0 \ell}^{\ell}(\voavac,1)\vmodvec=\vmodvec$}.
Here we used $\voaint_{0 \ell}^{\ell}(\voavac,x)=\id$, which we have noticed in Remark~\ref{rem:intertw_Virasoro_special_cases}.
Hence, we observe $\lambda_{\virmod_{\ell}}=\id_{\virmod_{\ell}}$.
As for the right unit isomorphism, it is characterized by \smash{$\ol{\rho_{\virmod_{\ell}}}(\vmodvec\boxtimes \voavac)={\rm e}^{L^{\virmod_{\ell}}_{-1}}\vmodvec$}, $\vmodvec\in\virmod_{\ell}$.
Again, from \eqref{eq:Pz-intmap_frmods} and the property of $\voaint_{\ell 0}^{\ell}(-,x)$ pointed out in Remark~\ref{rem:intertw_Virasoro_special_cases}, we see that
\begin{gather*}
 \vmodvec\boxtimes \voavac = \voaint_{\ell 0}^{\ell}(\vmodvec,1)={\rm e}^{L^{\virmod_{\ell}}_{-1}}\vmodvec,
\end{gather*}
which verifies that $\rho_{\virmod_{\ell}}=\id_{\virmod_{\ell}}$.
\end{proof}

As for the duality structure, each simple object $\virmod_{\ell}$, $\ell\in \bZ_{\geq 0}$ is self-dual, and the tensor product $\virmod_{\ell}\boxtimes\virmod_{\ell}$ contains a unique component $\virmod_{0}\simeq \virvoa_{c}$.
We define the evaluation morphisms
\begin{align*}
	\ev_{\virmod_{\ell}}\colon\ \virmod_{\ell}^{\vee}\boxtimes \virmod_{\ell} \to \virvoa_{c},\qquad
	\ev_{\virmod_{\ell}}'\colon\ \virmod_{\ell}\boxtimes \virmod_{\ell}^{\vee} \to \virvoa_{c}
\end{align*}
and the coevaluation morphisms
\begin{align*}
	\coev_{\virmod_{\ell}}\colon\ \virvoa_{c} \to \virmod_{\ell}\boxtimes \virmod_{\ell}^{\vee},\qquad
	\coev_{\virmod_{\ell}}'\colon\ \virvoa_{c} \to \virmod_{\ell}^{\vee}\boxtimes \virmod_{\ell}
\end{align*}to be the canonical projection
$\virmod_{\ell}\boxtimes\virmod_{\ell} \to \virvoa_{c}$ and the canonical injection
$\virvoa_{c} \to \virmod_{\ell}\boxtimes\virmod_{\ell}$.

Notice that, as an abelian category, $\catVir^{+}$ is independent of the parameter $t$.
As is clear from Theorem~\ref{thm:assoc_6j}, the associativity isomorphisms $\cA$ depend on $t$.

\begin{defn}
We write the category $\catVir^{+}$ equipped with $\cA$, $\lambda$, $\rho$,
$\ev$, $\ev'$, $\coev$, and $\coev'$ defined above as $\catVir^{+}(t)$.
\end{defn}

Let us start comparing the structure of $\catVir^{+}(t)$ to that of $\catQG(q)$.
For that, we first define the natural isomorphism
$
	J \colon F(- \boxtimes -) \Rightarrow F(-)\otimes F(-)
$
on simple objects by
\begin{gather*}
	J_{\virmod_{\ell_{1}},\virmod_{\ell_{2}}} = \sum_{\ell\in \Sel (\ell_{1},\ell_{2})}\CGemb{\ell}{\ell_{1}}{\ell_{2}},\qquad \ell_{1},\ell_{2}\in \bZ_{\geq 0}.
\end{gather*}

\begin{thm}
\label{thm:equivalence_vir_QG}
Assume the parameter matching $q={\rm e}^{\pi\bbmi t}$.
Under the identification of objects in~$\catQG (q)$ by $J$, the functor $F\colon \catVir^{+}(t) \to \catQG(q)$ maps
\begin{gather*}
	F(\cA_{\virmod_{\ell_{1}},\virmod_{\ell_{2}},\virmod_{\ell_{3}}}) = \alpha_{\QGirrep{\ell_{1}},\QGirrep{\ell_{2}},\QGirrep{\ell_{3}}}, \qquad
	F(\lambda_{\virmod_{\ell}}) = F(\rho_{\virmod_{\ell}}) = \id_{\QGirrep{\ell}}, \\
	F(\ev_{\virmod_{\ell}}) = F(\ev_{\virmod_{\ell}}') = \CGproj{0}{\ell}{\ell},\qquad
	F(\coev_{\virmod_{\ell}}) = F(\coev_{\virmod_{\ell}}') = \CGemb{0}{\ell}{\ell}
\end{gather*}
for $\ell_{1},\ell_{2},\ell_{3},\ell\in \bZ_{\geq 0}$.
Therefore, $\catVir^{+}(t)$ is a tensor category and the pair $(F,J)$ is an equivalence of tensor categories.
\end{thm}
\begin{proof}
The claim about $\lambda$, $\rho$ follows from Theorem~\ref{thm:unit_isom_catvir}.
As for the evaluation and coevaluation morphisms, the asserted properties follows from definition.
It remains to show the coincidence of associativity isomorphisms.

Recall the equivalence \smash{$\catQG (q) \simeq (\catQG^{\op}(q))^{\vee}$} of tensor categories.
We compare the associativity of $\catVir^{+}(t)$ with that of $(\catQG^{\op}(q))^{\vee}$ instead of $\catQG(q)$.
The associativity isomorphism $\cA_{\virmod_{\ell_{1}},\virmod_{\ell_{2}},\virmod_{\ell_{3}}}$ of \smash{$\catVir^{+}$} is characterized by the property
\begin{gather*}
 \pzproj{z_{1}}{\ell_{1}}{n}{\ell_{\infty}}\circ (\id_{\virmod_{\ell_{1}}}\boxtimes_{P(z_{1})}\pzproj{z_{2}}{\ell_{2}}{\ell_{3}}{n})\circ \cA_{\virmod_{\ell_{1}},\virmod_{\ell_{2}},\virmod_{\ell_{3}}} \\
 \qquad = \sum_{m\in \Chan{\ell_{1}}{\ell_{2}}{\ell_{3}}{\ell_{\infty}}}\sixjsymb{\ell_{1}}{\ell_{2}}{\ell_{3}}{\ell_{\infty}}{m}{n}\pzproj{z_{2}}{m}{\ell_{3}}{\ell_{\infty}}\circ (\pzproj{z_{1}-z_{2}}{\ell_{1}}{\ell_{2}}{m}\boxtimes_{P(z_{2})}\id_{\virmod_{\ell_{3}}}).
\end{gather*}
for $n\in \Chan{\ell_{2}}{\ell_{3}}{\ell_{1}}{\ell_{\infty}}$.
We compare this with \eqref{eq:QG_6j_definition} to conclude that
\begin{align*}
 F(\cA_{\virmod_{\ell_{1}},\virmod_{\ell_{2}},\virmod_{\ell_{3}}})&= (\alpha_{\QGirrep{\ell_{1}},\QGirrep{\ell_{2}},\QGirrep{\ell_{3}}}^{\op})^{-1}\\
 &\in \Hom_{\catQG^{\op}(q)}(\QGirrep{\ell_{1}}\otimes_{\QGcoprod^{\op}} (\QGirrep{\ell_{2}}\otimes_{\QGcoprod^{\op}}\QGirrep{\ell_{3}}),(\QGirrep{\ell_{1}}\otimes_{\QGcoprod^{\op}}\QGirrep{\ell_{2}})\otimes_{\QGcoprod^{\op}}\QGirrep{\ell_{3}}) \\
 &= \Hom_{(\catQG^{\op}(q))^{\vee}}((\QGirrep{\ell_{1}}\otimes_{\QGcoprod^{\op}}\QGirrep{\ell_{2}})\otimes_{\QGcoprod^{\op}}\QGirrep{\ell_{3}},\QGirrep{\ell_{1}}\otimes_{\QGcoprod^{\op}} (\QGirrep{\ell_{2}}\otimes_{\QGcoprod^{\op}}\QGirrep{\ell_{3}})).
\end{align*}
This proves the desired result.
\end{proof}

\begin{rem}
We make a comment on the reason why we compared the associativity of $\catVir^{+}(t)$ with that of $(\catQG^{\op}(q))^{\vee}$, but not $\catQG(q)$.
Recall that our $6j$-symbols are defined as the matrix elements in terms of injections.
On the VOA side, the same $6j$-symbols naturally appear as matrix elements in projections.
Therefore, to match injections to projections, one needs to take the dual of either category.
This also explains why we defined the $6j$ symbols associated with the opposite coproduct $\QGcoprod^{\op}$.
Since we have decided to take the dual category on the quantum group side, we can eventually return to the original coproduct $\QGcoprod$ under the equivalence~${(\catQG^{\op}(q))^{\vee}\simeq \catQG(q)}$.
\end{rem}

\subsection{Braiding}
We move on to comparing the structures of braiding on $\catVir^{+}(t)$ and $\catQG(q)$.
We first calculate the braiding of $\catVir^{+}(t)$ following the definition given in Section~\ref{sect:module_cat_VOA}.
Under the realization \eqref{eq:Pz-tensor_frmods}, we~can consider $\virmod_{\ell_{1}}\boxtimes \virmod_{\ell_{2}}$ and $\virmod_{\ell_{2}}\boxtimes\virmod_{\ell_{1}}$ to be identical as objects.
\begin{prop}
\label{prop:braiding_first_row}
Let $\ell_{1},\ell_{2}\in \bZ_{\geq 0}$.
Under the realization of $\virmod_{\ell_{1}}\boxtimes \virmod_{\ell_{2}}$ and $\virmod_{\ell_{2}}\boxtimes\virmod_{\ell_{1}}$ given by the formulas \eqref{eq:Pz-tensor_frmods} and \eqref{eq:Pz-intmap_frmods}, the braiding isomorphism $c_{\virmod_{\ell_{1}},\virmod_{\ell_{2}}}$ is given by
\begin{gather*}
 c_{\virmod_{\ell_{1}},\virmod_{\ell_{2}}}=\sum_{\ell\in \Sel (\ell_{1},\ell_{2})}{\rm e}^{\pi\bbmi (\frweight{\ell}-\frweight{\ell_{1}}-\frweight{\ell_{2}})}\id_{\virmod_{\ell}}.
\end{gather*}
\end{prop}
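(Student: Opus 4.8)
The plan is to combine semisimplicity with the explicit realization of the tensor product in~(\ref{eq:Pz-tensor_frmods})--(\ref{eq:Pz-intmap_frmods}), so that the whole braiding is pinned down by a single leading coefficient. Since $c_{\virmod_{\ell_1},\virmod_{\ell_2}}$ is a morphism between $\virmod_{\ell_1}\boxtimes\virmod_{\ell_2}$ and $\virmod_{\ell_2}\boxtimes\virmod_{\ell_1}$, both realized as $\bigoplus_{\ell\in\Sel(\ell_1,\ell_2)}\virmod_\ell$ with every simple summand of multiplicity one, the morphism spaces $\Hom_{\catVir^{+}}(\virmod_\ell,\virmod_{\ell'})=\delta_{\ell\,\ell'}\bC\,\id_{\virmod_\ell}$ force a diagonal form $c_{\virmod_{\ell_1},\virmod_{\ell_2}}=\sum_{\ell\in\Sel(\ell_1,\ell_2)}b_\ell\,\id_{\virmod_\ell}$ with scalars $b_\ell$. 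It then remains only to compute each $b_\ell$, which I would do by evaluating the braiding on $\hwvec_{\ell_1}\boxtimes\hwvec_{\ell_2}$ and reading off the coefficient of the top vector $\hwvec_\ell$ in the summand $\virmod_\ell$.

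Writing $\Psi_\ell(x):=\voaint_{\ell_2\,\ell_1}^{\ell}(\hwvec_{\ell_2},x)\hwvec_{\ell_1}$, the realization~(\ref{eq:Pz-intmap_frmods}) gives $\hwvec_{\ell_2}\boxtimes_{P(-1)}\hwvec_{\ell_1}=\sum_{\ell}\Psi_\ell(-1)$, evaluated with the standard branch $\log(-1)=\pi\bbmi$. The decisive geometric input is the branch bookkeeping of the parallel transport $\cT_\gamma$ along a path in the upper half plane from $-1$ to $1$: starting from $\log(1)=0$ and continuing back to $-1$ through the upper half plane increases the argument from $0$ to $\pi$, so $l_\gamma(-1)=\pi\bbmi$ and
\begin{equation*}
    \ol{\cT_\gamma}\bigl(\hwvec_{\ell_2}\boxtimes_{P(-1)}\hwvec_{\ell_1}\bigr)=\sum_{\ell\in\Sel(\ell_1,\ell_2)}\Psi_\ell(x)\big|_{x=e^{\pi\bbmi}},
\end{equation*}
now viewed inside $\virmod_{\ell_2}\boxtimes\virmod_{\ell_1}=\virmod_{\ell_2}\boxtimes_{P(1)}\virmod_{\ell_1}$, whose identification with $\bigoplus_\ell\virmod_\ell$ uses evaluation at $x=1$.

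I would then apply $e^{L_{-1}^{\virmod_{\ell_2}\boxtimes\virmod_{\ell_1}}}$ and project onto $\virmod_\ell$ by $\pzproj{1}{\ell_2}{\ell_1}{\ell}$, which commutes with $L_{-1}$, and extract the conformal weight $h_\ell$ part. Since $L_{-1}$ strictly raises conformal weight, the dressing $e^{L_{-1}}$ leaves the top coefficient untouched, and by the normalization of $\voaint_{\ell_2\,\ell_1}^{\ell}$ the weight-$h_\ell$ part of $\Psi_\ell(e^{\pi\bbmi})$ equals $e^{\pi\bbmi(h_\ell-h_{\ell_1}-h_{\ell_2})}\fusNorm_{\ell_2\,\ell_1}^{\ell}\hwvec_\ell$, the monodromy factor coming exactly from $x^{h_\ell-h_{\ell_1}-h_{\ell_2}}$ at $x=e^{\pi\bbmi}$. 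Comparing with the corresponding coefficient $b_\ell\fusNorm_{\ell_1\,\ell_2}^{\ell}\hwvec_\ell$ obtained from the source-side projection $\pzproj{1}{\ell_1}{\ell_2}{\ell}$ of $\hwvec_{\ell_1}\boxtimes\hwvec_{\ell_2}$, and invoking the symmetry $\fusNorm_{\ell_1\,\ell_2}^{\ell}=\fusNorm_{\ell_2\,\ell_1}^{\ell}$, which is manifest from the explicit product formula for $\fusNorm$ under the exchange $\ell_1\leftrightarrow\ell_2$, yields $b_\ell=e^{\pi\bbmi(h_\ell-h_{\ell_1}-h_{\ell_2})}$.

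The main obstacle I anticipate is precisely the branch bookkeeping that produces $l_\gamma(-1)=\pi\bbmi$ rather than $-\pi\bbmi$ or $3\pi\bbmi$, since the entire nontrivial eigenvalue is this single monodromy factor and an incorrect branch would spoil it; it must be read off carefully from the prescription $\arg\in[0,2\pi)$ together with the upper half plane constraint on $\gamma$. The remaining points---that $e^{L_{-1}}$ is invisible at top conformal weight and that the normalization constants cancel by the symmetry of $\fusNorm_{\ell_1\,\ell_2}^{\ell}$---are routine, the latter being exactly what renders the eigenvalue independent of the chosen normalization and equal to the clean factor $e^{\pi\bbmi(h_\ell-h_{\ell_1}-h_{\ell_2})}$.
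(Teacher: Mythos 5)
Your proposal is correct and follows essentially the same route as the paper: reduce to a single scalar per summand $\virmod_{\ell}$ (you via the multiplicity-free Hom spaces, the paper via the one-dimensionality of $\intsp{\virmod_{\ell_{1}}}{\virmod_{\ell_{2}}}{\virmod_{\ell}}$ applied to the skew-symmetrized operator $\Omega\voaint_{\ell_{2}\ell_{1}}^{\ell}$), then extract that scalar from the leading coefficient, using $l_{\gamma}(-1)=\pi\bbmi$, the invisibility of $e^{L_{-1}}$ at top conformal weight, and the symmetry $\fusNorm_{\ell_{1}\,\ell_{2}}^{\ell}=\fusNorm_{\ell_{2}\,\ell_{1}}^{\ell}$. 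All the key ingredients, including the branch bookkeeping you flag as the delicate point, match the paper's argument.
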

\begin{proof}
We compare $\vmodvec_{1}\boxtimes\vmodvec_{2}$ and \smash{${\rm e}^{L^{\virmod_{\ell_{2}}\boxtimes\virmod_{\ell_{1}}}_{-1}}\ol{\cT_{\gamma}}(\vmodvec_{2}\boxtimes_{P(-1)}\vmodvec_{1})$} for $\vmodvec_{1}\!\in\!\virmod_{\ell_{1}}$, $\vmodvec_{2}\!\in\!\virmod_{\ell_{2}}$.
From~\eqref{eq:Pz-intmap_frmods} we have
\begin{gather*}
 \vmodvec_{1}\boxtimes\vmodvec_{2}=\sum_{\ell\in\Sel (\ell_{1},\ell_{2})}\voaint_{\ell_{1}\ell_{2}}^{\ell}(\vmodvec_{1},1)\vmodvec_{2}
\end{gather*}
and on the other hand, we can see that
\begin{gather*}
 {\rm e}^{L^{\virmod_{\ell_{2}}\boxtimes\virmod_{\ell_{1}}}_{-1}}\ol{\cT_{\gamma}}(\vmodvec_{2}\boxtimes_{P(-1)}\vmodvec_{1})=\sum_{\ell\in\Sel (\ell_{1},\ell_{2})}\big(\Omega\voaint_{\ell_{2}\ell_{1}}^{\ell}\big)(\vmodvec_{1},1)\vmodvec_{2}.
\end{gather*}
Recall the transformation \eqref{eq:int_flip} of intertwining operators.
For each $\ell\in\bZ_{\geq 0}$, we have
\begin{gather*}
\big(\Omega\voaint_{\ell_{2}\ell_{1}}^{\ell}\big)(-,x)\in\intsp{\virmod_{\ell_{1}}}{\virmod_{\ell_{2}}}{\virmod_{\ell}}.
\end{gather*}
In particular, it must be proportional to $\voaint_{\ell_{1}\ell_{2}}^{\ell}(-,x)$ since the corresponding space of intertwining operators is one-dimensional.
We can fix the constant of proportionality by looking at the image of the highest weight vectors
\begin{gather*}
 \big(\Omega\voaint_{\ell_{2}\ell_{1}}^{\ell}\big)(\hwvec_{\ell_{1}},x)\hwvec_{\ell_{2}}\in
 \fusNorm_{\ell_{2} \ell_{1}}^{\ell}{\rm e}^{\pi\bbmi (\frweight{\ell}-\frweight{\ell_{1}}-\frweight{\ell_{2}})}\hwvec_{\ell}x^{\frweight{\ell}-\frweight{\ell_{1}}-\frweight{\ell_{2}}}+\virmod_{\ell}[[x]]x^{\frweight{\ell}-\frweight{\ell_{1}}-\frweight{\ell_{2}}+1}.
\end{gather*}
Since $\fusNorm_{\ell_{1} \ell_{2}}^{\ell}=\fusNorm_{\ell_{2} \ell_{1}}^{\ell}$, we can see that
\smash{$
 \big(\Omega\voaint_{\ell_{2}\ell_{1}}^{\ell}\big)(-,x)={\rm e}^{\pi\bbmi (\frweight{\ell}-\frweight{\ell_{1}}-\frweight{\ell_{2}})}\voaint_{\ell_{1}\ell_{2}}^{\ell}(-,x)$}.
Therefore, we~obtain the formula
\begin{gather*}
 {\rm e}^{L^{\virmod_{\ell_{2}}\boxtimes\virmod_{\ell_{1}}}_{-1}}\ol{\cT_{\gamma}}\left(\vmodvec_{2}\boxtimes_{P(-1)}\vmodvec_{1}\right)=\sum_{\ell\in\Sel (\ell_{1},\ell_{2})}{\rm e}^{\pi\bbmi (\frweight{\ell}-\frweight{\ell_{1}}-\frweight{\ell_{2}})}\voaint_{\ell_{1}\ell_{2}}^{\ell}(\vmodvec_{1},1)\vmodvec_{2},
\end{gather*}
which concludes the desired result.
\end{proof}

We now see that the braiding in the above theorem matches that on $\catQG(q)$.

\begin{thm}
Along the tensor functor $(F,J) \colon \catVir^{+}(t)\to\catQG(q)$,
$
 F(c_{\virmod_{\ell_{1}},\virmod_{\ell_{2}}})=\braid_{\QGirrep{\ell_{1}},\QGirrep{\ell_{2}}}
$
for each $\ell_{1},\ell_{2}\in\bZ_{\geq 0}$.
\end{thm}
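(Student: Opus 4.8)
The plan is to verify the identity componentwise, reducing everything to a single scalar computation on the quantum group side. Under the realization (\ref{eq:Pz-tensor_frmods}), both $\virmod_{\ell_{1}}\boxtimes\virmod_{\ell_{2}}$ and $\virmod_{\ell_{2}}\boxtimes\virmod_{\ell_{1}}$ are identified with $\bigoplus_{\ell\in\Sel(\ell_{1},\ell_{2})}\virmod_{\ell}$, and the preceding proposition shows that $c_{\virmod_{\ell_{1}},\virmod_{\ell_{2}}}$ acts on the summand $\virmod_{\ell}$ by the scalar $e^{\pi\bbmi(h_{\ell}-h_{\ell_{1}}-h_{\ell_{2}})}$; since $F$ sends $\id_{\virmod_{\ell}}$ to $\id_{\QGirrep{\ell}}$ and is a tensor functor, $F(c_{\virmod_{\ell_{1}},\virmod_{\ell_{2}}})$ acts by the same scalar on $\QGirrep{\ell}$. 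On the other side, $\braid_{\QGirrep{\ell_{1}},\QGirrep{\ell_{2}}}$ is a $\Uq$-module map between the multiplicity-free decompositions $\bigoplus_{\ell}\QGirrep{\ell}$, so by Schur's lemma it acts by a scalar $\beta_{\ell}$ on each $\QGirrep{\ell}$. Thus the theorem reduces to proving $\beta_{\ell}=e^{\pi\bbmi(h_{\ell}-h_{\ell_{1}}-h_{\ell_{2}})}$ for every $\ell\in\Sel(\ell_{1},\ell_{2})$ under $q=e^{\pi\bbmi t}$.

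To compute $\beta_{\ell}$ I would evaluate $\braid_{\QGirrep{\ell_{1}},\QGirrep{\ell_{2}}}=P_{\QGirrep{\ell_{1}},\QGirrep{\ell_{2}}}\circ R_{\QGirrep{\ell_{1}},\QGirrep{\ell_{2}}}$ on the highest weight vector $w:=\CGemb{\ell}{\ell_{1}}{\ell_{2}}(\QGirvec{\ell}{0})=\sum_{j=0}^{s}\CGcoeff{\ell}{\ell_{1}}{\ell_{2}}{j}\,\QGirvec{\ell_{1}}{j}\otimes\QGirvec{\ell_{2}}{s-j}$ of the $\QGirrep{\ell}$-summand, where $s=(\ell_{1}+\ell_{2}-\ell)/2$. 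Since $\braid$ is a module map, $\braid_{\QGirrep{\ell_{1}},\QGirrep{\ell_{2}}}(w)=\beta_{\ell}\,\tilde w$ with $\tilde w:=\CGemb{\ell}{\ell_{2}}{\ell_{1}}(\QGirvec{\ell}{0})$ the corresponding highest weight vector in $\QGirrep{\ell_{2}}\otimes_{\QGcoprod}\QGirrep{\ell_{1}}$. The key simplification is to read off $\beta_{\ell}$ from a single basis coefficient: in the expansion of $\univR$ the operator $F^{n}\otimes E^{n}$ raises the first tensor index by $n$, so the monomial $\QGirvec{\ell_{1}}{0}\otimes\QGirvec{\ell_{2}}{s}$ survives in $R_{\QGirrep{\ell_{1}},\QGirrep{\ell_{2}}}w$ only through the term $j=0$, $n=0$, on which $q^{\frac{1}{2}H\otimes H}$ acts by $q^{\frac{1}{2}\ell_{1}(\ell_{2}-2s)}$. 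After the flip $P$ this becomes the coefficient of $\QGirvec{\ell_{2}}{s}\otimes\QGirvec{\ell_{1}}{0}$, which in $\tilde w$ carries coefficient $\CGcoeff{\ell}{\ell_{2}}{\ell_{1}}{s}$. Comparing the two gives
\[
    \beta_{\ell}=\frac{\CGcoeff{\ell}{\ell_{1}}{\ell_{2}}{0}}{\CGcoeff{\ell}{\ell_{2}}{\ell_{1}}{s}}\,q^{\frac{1}{2}\ell_{1}(\ell_{2}-2s)}.
\]

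Finally I would substitute the explicit Clebsch--Gordan coefficients (\ref{eq:CG_coeffs}); the $q$-factorial prefactors cancel and leave $\CGcoeff{\ell}{\ell_{1}}{\ell_{2}}{0}/\CGcoeff{\ell}{\ell_{2}}{\ell_{1}}{s}=(-1)^{s}q^{-s(\ell_{2}-s+1)}$, whence
\[
    \beta_{\ell}=(-1)^{s}q^{\frac{1}{4}\left[\ell(\ell+2)-\ell_{1}(\ell_{1}+2)-\ell_{2}(\ell_{2}+2)\right]}.
\]
Comparing with the target, the formula $h_{\ell}=\frac{\ell(\ell+2)}{4}t-\frac{\ell}{2}$ together with $q=e^{\pi\bbmi t}$ yields $e^{\pi\bbmi(h_{\ell}-h_{\ell_{1}}-h_{\ell_{2}})}=e^{-\frac{\pi\bbmi}{2}(\ell-\ell_{1}-\ell_{2})}q^{\frac{1}{4}[\ell(\ell+2)-\ell_{1}(\ell_{1}+2)-\ell_{2}(\ell_{2}+2)]}$, and since $\ell\in\Sel(\ell_{1},\ell_{2})$ forces $\ell+\ell_{1}+\ell_{2}\equiv 0\bmod 2$, the prefactor $e^{-\frac{\pi\bbmi}{2}(\ell-\ell_{1}-\ell_{2})}$ equals $(-1)^{s}$, matching $\beta_{\ell}$ exactly. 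I expect the only delicate point to be the bookkeeping of $q$-powers, namely checking that the Clebsch--Gordan ratio contributes precisely $q^{-s(\ell_{2}-s+1)}$ and that the exponents collapse to the quarter-Casimir combination; this is a finite mechanical simplification rather than a conceptual difficulty, and everything else is dictated by Schur's lemma and the tensor functoriality of $F$.
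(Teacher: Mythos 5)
Your proposal is correct and follows essentially the same route as the paper: reduce via Schur's lemma to a scalar on each $\QGirrep{\ell}$-summand, evaluate $P\circ R$ on $\CGemb{\ell}{\ell_{1}}{\ell_{2}}(\QGirvec{\ell}{0})$, observe that only the $j=0$, $n=0$ term contributes to the coefficient of $\QGirvec{\ell_{2}}{s}\otimes\QGirvec{\ell_{1}}{0}$, and obtain the same ratio $\frac{\CGcoeff{\ell}{\ell_{1}}{\ell_{2}}{0}}{\CGcoeff{\ell}{\ell_{2}}{\ell_{1}}{s}}q^{\frac{1}{2}\ell_{1}(\ell_{2}-2s)}=(-1)^{s}q^{\frac{1}{2}\ell_{1}\ell_{2}-s(\ell_{1}+\ell_{2})+s^{2}-s}$ as the paper. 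Your explicit verification of the final exponent identity via $\frac{1}{4}\left[\ell(\ell+2)-\ell_{1}(\ell_{1}+2)-\ell_{2}(\ell_{2}+2)\right]$ and the parity constraint $\ell+\ell_{1}+\ell_{2}\equiv 0\bmod 2$ merely spells out a step the paper leaves to the reader.
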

\begin{proof}
It suffices to show that
\[
 \CGproj{\ell}{\ell_{2}}{\ell_{1}}\circ \braid_{\QGirrep{\ell_{1}},\QGirrep{\ell_{2}}}\circ \CGemb{\ell}{\ell_{1}}{\ell_{2}}={\rm e}^{\pi \bbmi (\frweight{\ell}-\frweight{\ell_{1}}-\frweight{\ell_{2}})}\id_{\virmod_{\ell}}
\]
for each $\ell\in\Sel(\ell_{1},\allowbreak\ell_{2})$.
Let us apply $\braid_{\QGirrep{\ell_{1}},\QGirrep{\ell_{2}}}$ on \smash{$\CGemb{\ell}{\ell_{1}}{\ell_{2}}\big(\QGirvec{\ell}{0}\big)$} to find
\begin{gather*}
 \braid_{\QGirrep{\ell_{1}},\QGirrep{\ell_{2}}}\circ \CGemb{\ell}{\ell_{1}}{\ell_{2}}\big(\QGirvec{\ell}{0}\big)
 \in \CGcoeff{\ell}{\ell_{1}}{\ell_{2}}{0}q^{\frac{1}{2}\ell_{1}(\ell_{2}-2s)}\QGirvec{\ell_{2}}{s}\otimes\QGirvec{\ell_{1}}{0}+\sum_{j=1}^{s}\bC \QGirvec{\ell_{2}}{s-j}\otimes\QGirvec{\ell_{1}}{j}.
\end{gather*}
Here we set $s=(\ell_{1}+\ell_{2}-\ell)/2$ as before.
Since the composition \smash{$\CGproj{\ell}{\ell_{2}}{\ell_{1}}\circ \braid_{\QGirrep{\ell_{1}},\QGirrep{\ell_{2}}}\circ \CGemb{\ell}{\ell_{1}}{\ell_{2}}$} must be proportional to the identity,
we have
\begin{align*}
 \CGproj{\ell}{\ell_{2}}{\ell_{1}}\circ \braid_{\QGirrep{\ell_{1}},\QGirrep{\ell_{2}}}\circ \CGemb{\ell}{\ell_{1}}{\ell_{2}}=\frac{\CGcoeff{\ell}{\ell_{1}}{\ell_{2}}{0}}{\CGcoeff{\ell}{\ell_{2}}{\ell_{1}}{s}}q^{\frac{1}{2}\ell_{1}(\ell_{2}-2s)}\id_{\QGirrep{\ell}}.
\end{align*}
Substituting the explicit formula for the Clebsch--Gordan coefficients \eqref{eq:CG_coeffs}, the constant becomes
\begin{gather*}
 \frac{\CGcoeff{\ell}{\ell_{1}}{\ell_{2}}{0}}{\CGcoeff{\ell}{\ell_{2}}{\ell_{1}}{s}}q^{\frac{1}{2}\ell_{1}(\ell_{2}-2s)}
 =(-1)^{s}q^{\frac{1}{2}\ell_{1}\ell_{2}-s(\ell_{1}+\ell_{2})+s^{2}-s}.
\end{gather*}
When we recall that $q$ is related to $t$ by $q={\rm e}^{\pi\bbmi t}$ and use the formula \eqref{eq:conf_weight_fr}, we can check that this constant coincides with ${\rm e}^{\pi\bbmi (\frweight{\ell}-\frweight{\ell_{1}}-\frweight{\ell_{2}})}$ to complete the proof.
\end{proof}

\subsection{Ribbon structure}
Finally, we compare the ribbon structures.
Let us state the result immediately.
\begin{thm}
Along the functor $(F,J) \colon \catVir^{+}(t)\to\catQG(q)$, we have
$
 F(\theta_{\virmod_{\ell}})=\theta_{\QGirrep{\ell}}
$
for each $\ell\in\bZ_{\geq 0}$.
\end{thm}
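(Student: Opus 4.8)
The plan is to show that both $F(\theta_{\virmod_{\ell}})$ and $\theta_{\QGirrep{\ell}}$ are the \emph{same} scalar multiple of the identity, which suffices since each is an endomorphism of a simple object and $F$ sends $\id_{\virmod_{\ell}}$ to $\id_{\QGirrep{\ell}}$.

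First I would treat the VOA side. By definition $\theta_{\virmod_{\ell}} = e^{2\pi\bbmi L_{0}^{\virmod_{\ell}}}$, and $\virmod_{\ell}$ is the simple quotient of the Verma module $\virVerma(c,h_{\ell})$, so its $L_{0}$-spectrum lies in $h_{\ell}+\bZ_{\geq 0}$. Hence on every homogeneous vector $e^{2\pi\bbmi L_{0}^{\virmod_{\ell}}}$ acts by $e^{2\pi\bbmi(h_{\ell}+k)} = e^{2\pi\bbmi h_{\ell}}$ for some $k\in\bZ_{\geq 0}$, giving $\theta_{\virmod_{\ell}} = e^{2\pi\bbmi h_{\ell}}\id_{\virmod_{\ell}}$ and therefore $F(\theta_{\virmod_{\ell}}) = e^{2\pi\bbmi h_{\ell}}\id_{\QGirrep{\ell}}$.

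Next I would compute $\theta_{\QGirrep{\ell}}$. Since the twist is a $\Uq$-module morphism and $\QGirrep{\ell}$ is simple, Schur's lemma says $\theta_{\QGirrep{\ell}}$ is a scalar, so it can be read off from its action on any single nonzero vector. The key simplification is to evaluate the inverse-twist formula (\ref{eq:QGtwist_inverse}) on the \emph{lowest} weight vector $\QGirvec{\ell}{\ell}$ rather than the highest one: because $F\QGirvec{\ell}{\ell} = 0$, every term with $n\geq 1$ in the sum vanishes and only the $n=0$ term survives. Interpreting the diagonal symbols on the weight-$(-\ell)$ vector $\QGirvec{\ell}{\ell}$ via $K\QGirvec{\ell}{\ell} = q^{-\ell}\QGirvec{\ell}{\ell}$, $(-1)^{H}\QGirvec{\ell}{\ell} = (-1)^{\ell}\QGirvec{\ell}{\ell}$, and $q^{-\frac{1}{2}H^{2}}\QGirvec{\ell}{\ell} = q^{-\frac{1}{2}\ell^{2}}\QGirvec{\ell}{\ell}$, the formula collapses to
\begin{equation*}
\theta_{\QGirrep{\ell}}^{-1}\QGirvec{\ell}{\ell} = (-1)^{H}Kq^{-\frac{1}{2}H^{2}}\QGirvec{\ell}{\ell} = (-1)^{\ell}q^{-\ell-\frac{1}{2}\ell^{2}}\QGirvec{\ell}{\ell},
\end{equation*}
so that $\theta_{\QGirrep{\ell}} = (-1)^{\ell}q^{\ell(\ell+2)/2}\id_{\QGirrep{\ell}}$.

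Finally I would match the two scalars. Substituting $q = e^{\pi\bbmi t}$ and $h_{\ell} = \tfrac{\ell(\ell+2)}{4}t - \tfrac{\ell}{2}$ from (\ref{eq:conf_weight_fr}) gives $e^{2\pi\bbmi h_{\ell}} = e^{\pi\bbmi\ell(\ell+2)t/2}e^{-\pi\bbmi\ell} = q^{\ell(\ell+2)/2}(-1)^{\ell}$, which agrees exactly with $\theta_{\QGirrep{\ell}}$, proving $F(\theta_{\virmod_{\ell}}) = \theta_{\QGirrep{\ell}}$. I expect no serious obstacle in this argument; the only delicate points are the two standard facts that make it trivial, namely that the twist is a module endomorphism (so Schur applies) and that the formal symbols $(-1)^{H}$ and $q^{-\frac{1}{2}H^{2}}$ are to be read as diagonal operators in the weight basis. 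The one pitfall to avoid is evaluating on the highest weight vector instead, which does \emph{not} truncate the sum and would force one through a terminating $q$-series identity; choosing the lowest weight vector sidesteps this entirely.
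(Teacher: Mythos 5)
Your proposal is correct and follows essentially the same route as the paper: both evaluate the inverse-twist formula (\ref{eq:QGtwist_inverse}) on the lowest weight vector $\QGirvec{\ell}{\ell}$, where $F\QGirvec{\ell}{\ell}=0$ truncates the sum to the $n=0$ term, yielding $(-1)^{\ell}q^{\ell(\ell+2)/2}$, which matches $e^{2\pi\bbmi h_{\ell}}$ under $q=e^{\pi\bbmi t}$. The extra details you supply (the $L_{0}$-spectrum argument and Schur's lemma) are exactly the points the paper leaves as ``already clear.''
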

\begin{proof}
It is already clear that $\theta_{\virmod_{\ell}}={\rm e}^{2\pi\bbmi \frweight{\ell}}\id_{\virmod_{\ell}}$.
When we apply the formula for \smash{$\theta_{\QGirrep{\ell}}^{-1}$} in~\eqref{eq:QGtwist_inverse} to the lowest weight vector \smash{$\QGirvec{\ell}{\ell}$}, we immediately get
\smash{$
 \theta_{\QGirrep{\ell}}^{-1} \QGirvec{\ell}{\ell}={\rm e}^{-2\pi\bbmi \frweight{\ell}}\QGirvec{\ell}{\ell}$}.
Therefore, we get~${\theta_{\QGirrep{\ell}}={\rm e}^{2\pi\bbmi \frweight{\ell}}\id_{\QGirrep{\ell}}}$ concluding the desired result.
\end{proof}

\section[The category of the C\_1-cofinite modules]{The category of the $\boldsymbol{ C_{1}}$-cofinite modules}
\label{sect:C1-cofinite_module_category}
We move on to looking at the category of $C_{1}$-cofinite modules of $\virvoa_{c}$.
In general, given a VOA $\voagen$ and $\voagen$-module $\voamodgen$,
we may form the $C_{1}$-space of $\voamodgen$ as
\begin{gather*}
	C_{1}(\voamodgen) = \bigl\{\bigl[x^{0}\bigr]\voaY_{\voamodgen}(\voavec,x)\vmodvec\mid \voavec\in\voagen_{>0}, \, \vmodvec \in \voamodgen\bigr\},
\end{gather*}
where the symbol $[x^{0}]$ takes the coefficient of $x^{0}$, i.e., the constant term, in the following formal series, and $\voaV_{>0} = \bigoplus_{n>0} \voaV_{n}$.
The module $\voamodgen$ is called $C_{1}$-cofinite, if $C_{1}(W)$ has a finite codimension: $\dim \voamodgen/C_{1}(\voamodgen)<\infty$.

We write $\catVir^{1}$ for the category of $C_{1}$-cofinite $\virvoa_{c}$-modules.
Due to~\cite{CJORY-tensor_categories_arising_from_Virasoro_algebra}, it is equipped with the structure of a braided tensor category,
resulting in $\catVir^{1}(t)$ depending on the parameter $t$.
In the same paper, \cite{CJORY-tensor_categories_arising_from_Virasoro_algebra} studied the detailed structure of $\catVir^{+}(t)$.
Combining their results and ours in the previous Section~\ref{sect:first-row_module_category},
Theorem~\ref{thm:C1-cat_QG} is rather straightforward as we will see below.

\subsection[Category catVir\^1(t)]{Category $\boldsymbol{ \catVir^{1}(t)}$}
Let us first see the main input from~\cite{CJORY-tensor_categories_arising_from_Virasoro_algebra} regarding the category $\catVir^{1}(t)$
of $C_{1}$-cofinite $\virvoa_{c}$-modules.

\begin{thm}[\cite{CJORY-tensor_categories_arising_from_Virasoro_algebra}]
\label{thm:c1cat-tensor}
The category $\catVir^{1}(t)$ is a semi-simple braided tensor category with simple objects $\virmod_{(k,\ell)}$, $k,\ell\in \bZ_{\geq 0}$.
Furthermore, we have the following fusion rules:
\begin{align*}
	\virmod_{(k_{1},0)}\boxtimes\virmod_{(k_{2},0)} = \bigoplus_{k_{3}\in \Sel (k_{1},k_{2})}\virmod_{(k_{3},0)},\qquad
	\virmod_{(k,0)}\boxtimes\virmod_{(0,\ell)} = \virmod_{(k,\ell)}
\end{align*}
for $k_{1},k_{2},k,\ell\in\bZ_{\geq 0}$.
\end{thm}

In particular, the first-row category $\catVir^{+}(t)$ can be found as a tensor subcategory of $\catVir^{1}(t)$.
Analogously to the first-row category, we may define the first-column category $\catVir^{-}(t)$
generated by the modules $\virmod_{(0,s)}$, $s\in \bZ_{\geq 0}$ in the first column of the Kac table.
Recall that the row and column are exchanged under changing the parameter $t$ to $t^{-1}$.
Therefore, the first-column category $\catVir^{-}(t)$ is also a braided tensor category and is equivalent to
$\catQG (\tilde{q})$ with another ${\tilde{q} = {\rm e}^{\pi \bbmi t^{-1}}}$.

We may define the (Deligne) tensor product $\catVir^{+}(t) \TCprod\catVir^{-} (t)$ following~\cite[Section~4.6]{etingof2015tensor}.
It is an additive category whose objects are direct sums of those of the form $X^{+}\TCprod X^{-}$ with $X^{\pm}\in \catVir^{\pm}(t)$.
The space of morphisms from $X^{+}\TCprod X^{-}$ to $Y^{+}\TCprod Y^{-}$ is given by
\begin{gather*}
	\Hom_{\catVir^{+}(t)\TCprod\catVir^{-}(t)}\big(X^{+}\TCprod X^{-}, Y^{+}\TCprod Y^{-}\big) = \Hom_{\catVir^{+}(t)}\big(X^{+},Y^{+}\big)\otimes_{\bC} \Hom_{\catVir^{-}(t)}(X^{-},Y^{-}).
\end{gather*}
From this definition, it follows that the simple objects of \smash{$\catVir^{+}(t)\TCprod\catVir^{-}(t)$} are \smash{$\virmod_{(k,0)}\TCprod \virmod_{(0,\ell)}$} with~${k,\ell \in \bZ_{\geq 0}}$.
The monoidal structures on $\catVir^{\pm}(t)$ are naturally transferred to $\catVir^{+}(t)\TCprod\catVir^{-}(t)$.
In particular, the associativity on $\catVir^{+}(t)\TCprod\catVir^{-}(t)$ is simply the product of those on $\catVir^{\pm}(t)$.

\begin{prop}
\label{prop:C1_category_decomposition}
The functor determined by
\begin{gather*}
	\catVir^{+}(t)\TCprod\catVir^{-}(t) \to \catVir^{1}(t);\qquad X^{+}\TCprod X^{-} \mapsto X^{+}\boxtimes X^{-}
\end{gather*}
and the natural isomorphisms
\begin{gather}
\label{eq:Jisom_C1}
	\id_{X^{+}}\boxtimes \braid_{X^{-},Y^{+}}\boxtimes \id_{Y^{-}}\colon\ X^{+}\boxtimes X^{-} \boxtimes Y^{+}\boxtimes Y^{-} \to X^{+}\boxtimes Y^{+} \boxtimes X^{-}\boxtimes Y^{-},
\end{gather}
with $X^{+},Y^{+}\in \catVir^{+}(t)$, $X^{-},Y^{-}\in \catVir^{-}(t)$ form an equivalence of tensor categories.
Here, $\braid$ is the braiding in $\catVir^{1}(t)$.
\end{prop}
\begin{proof}
To see that the functor is an equivalence of additive categories, it suffices to show that, for $X^{+}, Y^{+}\in \catVir^{+}(t)$ and $X^{-}, Y^{-}\in \catVir^{-}(t)$,
\begin{gather*}
	\Hom_{\catVir^{+}(t)}\big(X^{+},Y^{+}\big)\otimes_{\bC} \Hom_{\catVir^{-}(t)}(X^{-},Y^{-}) \to \Hom_{\catVir^{1}(t)}\big(X^{+}\boxtimes X^{-}, Y^{+}\boxtimes Y^{-}\big), \\
	f\otimes g \mapsto f\boxtimes g
\end{gather*}
is a linear isomorphism.
It is clearly the case as we can check on simple objects.

It remains to show that the functor along with the isomorphisms \eqref{eq:Jisom_C1} is a tensor functor.
In other words, we need to show that the isomorphisms
\begin{align*}
	\big(\big(X^{+}\boxtimes Y^{+}\big)\boxtimes Z^{+}\big) \boxtimes ((X^{-}\boxtimes Y^{-})\boxtimes Z^{-})
	\to \big(X^{+}\boxtimes \big(Y^{+}\boxtimes Z^{+}\big)\big) \boxtimes (X^{-}\boxtimes (Y^{-}\boxtimes Z^{-}))
\end{align*}
and
\begin{align*}
	\big(\big(X^{+}\boxtimes X^{-}\big)\boxtimes \big(Y^{+}\boxtimes Y^{-})\big)\boxtimes \big(Z^{+}\boxtimes Z^{-}\big)
	\to \big(X^{+}\boxtimes X^{-}\big)\boxtimes \big(\big(Y^{+}\boxtimes Y^{-}\big)\boxtimes \big(Z^{+}\boxtimes Z^{-}\big)\big)
\end{align*}
coincide for any choice of $X^{+},Y^{+},Z^{+}\in \catVir^{+}(t)$ and $X^{-},Y^{-},Z^{-}\in \catVir^{-}(t)$
under the identification by \eqref{eq:Jisom_C1}.
This follows from the identity
\begin{align*}
	\braid_{X^{-}\boxtimes Y^{-},Z^{+}}\circ \braid_{X^{-},Y^{+}} = \braid_{X^{-},Y^{+}\boxtimes Z^{+}}\circ \braid_{Y^{-},Z^{+}}
\end{align*}
and the axioms of a braided tensor category.
\end{proof}

\subsection[Braiding in catVir\^1(t)]{Braiding in $\boldsymbol{ \catVir^{1}(t)}$}
We already know the explicit formulas for the braiding in $\catVir^{\pm}(t)$.
Furthermore, in the whole category $\catVir^{1}(t)$, we can trivialize the braiding between $\catVir^{+}(t)$ and $\catVir^{-}(t)$ as follows.

For each $k,\ell\in\bZ_{\geq 0}$, let us fix an intertwining operator
\smash{$
	\voaint(-,x) \in \intsp{\virmod_{(k,0)}}{\virmod_{(0,\ell)}}{\virmod_{(k,\ell)}}$}.
By Theorem~\ref{thm:c1cat-tensor}, we may realize $\virmod_{(k,0)}\boxtimes\virmod_{(0,\ell)}$ as
$
	\virmod_{(k,0)}\boxtimes\virmod_{(0,\ell)} = \virmod_{(k,\ell)}
$ along with $\voaint(-,1)$
and $\virmod_{(0,\ell)}\boxtimes\virmod_{(k,0)}$ as
$
	\virmod_{(0,\ell)}\boxtimes\virmod_{(k,0)} = \virmod_{(k,\ell)}
$ along with $(\Omega^{-1}\voaint)(-,1)$.
Here, recall \eqref{eq:int_flip} for the transformation $\Omega$.

\begin{prop}
\label{prop:braiding_row_column}
For $k,\ell\in \bZ_{\geq 0}$, we assume the above realization of $\virmod_{(k,0)}\boxtimes\virmod_{(0,\ell)}$ and $\virmod_{(0,\ell)}\boxtimes\virmod_{(k,0)}$.
Then, the braiding $c_{\virmod_{(k,0)},\virmod_{(0,\ell)}}$ and $c_{\virmod_{(0,\ell)},\virmod_{(k,0)}}$ are both $\id_{\virmod_{(k,\ell)}}$.
\end{prop}
\begin{proof}
The identity \smash{$c_{\virmod_{(k,0)},\virmod_{(0,\ell)}} = \id_{\virmod_{(k,\ell)}}$} follows from the definition of the braiding.
Regarding the other one $c_{\virmod_{(0,\ell)},\virmod_{(k,0)}} = \id_{\virmod_{(k,\ell)}}$, it suffices to show that $\Omega \voaint = \Omega^{-1}\voaint$
for the above fixed intertwining operator $\voaint$.
This follows from the identity
\smash{$
	{\rm e}^{2\pi\bbmi(h_{k+1,\ell+1} - h_{k+1,0} - h_{0,\ell + 1})} = 1
$}
with the conformal weights in the Kac table.
\end{proof}

Note that the last identity ensures that $\catVir^{1}(t)$ has a ribbon structure
with the standard twist operator from Section~\ref{sect:HL_braiding}.

\subsection{Quantum group dual}
Recall that the parameter $t$ for the Virasoro algebra and the quantum parameter $q$ for $\Uq$ were related as $q = {\rm e}^{\pi \bbmi t}$.
Let us form the tensor product
$
	\Uq\otimes \Uqtilde
$
of Hopf algebras with \smash{$\tilde{q} = {\rm e}^{\pi \bbmi t^{-1}}$}.
We set \smash{$\catQGdouble (q,\tilde{q})$} to be the category of finite-dimensional representations of $\Uq\otimes \Uqtilde$ such that the eigenvalues of $K\otimes 1$ and $1 \otimes K$ are of the form $q^{n}$ and $\tilde{q}^{n}$ with $n\in \bZ$, respectively.
It is reasonable to call such modules of type I.
Now, it is clear that $\catQGdouble (q,\tilde{q})$ is equivalent to the tensor product $\catQG (q) \TCprod \catQG (\tilde{q})$.
Thus, composing Theorem~\ref{thm:equivalence_vir_QG} with Proposition~\ref{prop:C1_category_decomposition}, we get an equivalence
\begin{gather*}
	\catQGdouble (q,\tilde{q}) \to \catQG (q) \TCprod \catQG (\tilde{q}) \to \catVir^{+}(t)\TCprod \catVir^{-}(t) \to \catVir^{1}(t)
\end{gather*}
of tensor categories.

Furthermore, we can equip \smash{$\catQGdouble (q,\tilde{q})$} with a braiding by the tensor product of the universal $R$-matrices on $\Uq$ and $\Uqtilde$.
In other words, the two components $\catQG (q)$ and $\catQG (\tilde{q})$ are braided trivially in \smash{$\catQGdouble (q,\tilde{q})$}.
The ribbon structure of \smash{$\catQGdouble (q,\tilde{q})$} is also given by the tensor product of those on the two components.
Due to Proposition~\ref{prop:braiding_row_column}, the above tensor equivalence is a ribbon equivalence.

\subsection*{Acknowledgements}
The author is grateful to Kalle Kyt{\"o}l{\"a}, Eveliina Peltola, and Ingo Runkel for fruitful discussions.
The author also thanks the anonymous referees for various suggestions for improvement.
This work was supported by Academy of Finland (No.~248~130).

\pdfbookmark[1]{References}{ref}
\LastPageEnding

\end{document}